\documentclass[12pt,leqno]{amsart} 
\usepackage[backgroundcolor=yellow,colorinlistoftodos]{todonotes}
\usepackage{fourier}
\usepackage{datetime}
\usepackage{graphicx,float,tabularx}
\usepackage[utf8]{inputenc}
\usepackage{enumitem}   
\usepackage{amsmath,amssymb,amsfonts,mathrsfs,mathtools}%,MnSymbol}
\usepackage{hyperref}
\usepackage{enumitem}
\usepackage[all,2cell]{xy}
\xyoption{arrow}
\SelectTips{cm}{10}

\newdateformat{myformat}{\THEDAY\ \monthname[\THEMONTH] \THEYEAR,
  \currenttime}
\hoffset - 2cm
\makeatletter
\renewcommand\@makefnmark{%
  \hbox{\textsuperscript{\normalfont\color{red}\@thefnmark}}}
\makeatother

\makeatletter
\providecommand\@dotsep{5}
\def\listtodoname{List of Todos}
\def\listoftodos{\@starttoc{tdo}\listtodoname}
\makeatother

\newcommand{\bookmarktodo}[2][]{\global\advance\count66 by 1
\footnote{\color{red}\textbf{#2}}%
  \addcontentsline{tdo}{todo}{\protect{
    {\color{red}[\the\count66]}\hskip 1em
    \parbox{14cm}{#2}%
  }}%
}
\catcode`\@=11 %\def\today{January 27, 2023}
\def\@evenfoot{\rule{0pt}{20pt}[\myformat\today] \hfill [{\tt \jobname.tex}]}
\def\@oddfoot{\rule{0pt}{20pt}{[\tt \jobname.tex}]\hfill [\myformat\today]}
\catcode`\@=13

\usepackage[object=vectorian]{pgfornament}

\usepackage{bbold,euscript}

\usepackage{quiver}

\usepackage{tikz}
\usetikzlibrary{arrows.meta}
\usetikzlibrary{decorations.markings}

\newtheorem{theorem}{Theorem}%[subsection]
\newtheorem{corollary}[theorem]{Corollary}

\newtheorem{lemma}[theorem]{Lemma}
\newtheorem{proposition}[theorem]{Proposition}

\theoremstyle{definition}

\newtheorem{remark}[theorem]{Remark}
\newtheorem{definition}[theorem]{Definition}
\def\Coker{{\rm Cok}}
\def\Ker{{\rm Ker}}

\def\T{\mathbb{T}}
\def\Ar{\mathbb{A}}

\def\ee{\varepsilon}

\def\id{{\mathbb 1}}

\def\ttt#1#2{%
  \ifnum#1=0
    \ifnum#2=0
    \else
      \ifnum#2=1
        {\mathbb T}_1%
      \else
        {\mathbb T}_1^{#2}%
      \fi
    \fi
  \else
    \ifnum#2=0
      \ifnum#1=1
        {\mathbb T}_2%
      \else
        {\mathbb T}_2^{#1}%
      \fi
    \else
      \ifnum#1=1
        {\mathbb T}_2%
      \else
        {\mathbb T}_2^{#1}%
      \fi
      \ifnum#2=1
        {\mathbb T}_1%
      \else
        {\mathbb T}_1^{#2}%
      \fi
    \fi
  \fi
}
\def\iii#1#2{%
  \ifnum#1=0
    \ifnum#2=0
    \else
      \ifnum#2=1
        {\mathbb T}_2%
      \else
        {\mathbb T}_2^{#2}%
      \fi
    \fi
  \else
    \ifnum#2=0
      \ifnum#1=1
        {\mathbb T}_1%
      \else
        {\mathbb T}_1^{#1}%
      \fi
    \else
      \ifnum#1=1
        {\mathbb T}_1%
      \else
        {\mathbb T}_1^{#1}%
      \fi
      \ifnum#2=1
        {\mathbb T}_2%
      \else
        {\mathbb T}_2^{#2}%
      \fi
    \fi
  \fi
}
\def\TT#1#2{\T_{#1}^{#2}}

\newcommand{\C}{{\tt C}}
\newcommand{\K}{{\tt K}}
\newcommand{\D}{\tt{D}}
\newcommand{\Cat}{{\tt Cat}}

\newcommand{\uu}{\mathbb{1}}

\newcommand{\NN}{\mathbb{N}}

\newcommand{\eK}{\alpha_2}%{\EuScript{K}}
\newcommand{\eC}{\alpha_1}%{\EuScript{C}}

\newcommand{\x}{\times}

\def\dvedve{\mathbf{2}}

\newcommand{\oAr}{\overline{\Ar}}

\newcommand{\Vr}{\text{\raisebox{\depth}{\scalebox{1}[-1]{$\Ar$}}}}
\newcommand{\uVr}{\underline{\text{\raisebox{\depth}{\scalebox{1}[-1]{$\Ar$}}}}}

\DeclareMathOperator{\coker}{coker}

\renewcommand{\epsilon}{\varepsilon}
\renewcommand{\phi}{\varphi}
\renewcommand{\rho}{\varrho}

\let\pf\proof
\let\epf\endproof

\title[Lax Distributivity and Abelian Categories]
{Lax Distributivity and a Characterization of Abelian Categories}
\author[M.\ Markl, D.\ Trnka]{Martin Markl and Dominik Trnka}
\address{M.M.: Institute of Mathematics, The Czech
  Academy of Sciences, {\v Z}itn{\'a} 25, 110 00 Praha, The Czech Republic}

\address{D.T.: Institute of Mathematics, University of Technology,
  Technick\'a 2896, 616 69 Brno, The Czech Republic}
\curraddr{The same as M.M.}

\catcode`\@=11
\email{markl@math.cas.cz; trnka@fme.vutbr.cz}
\@namedef{subjclassname@2010}{%
  \textup{2020} Mathematics Subject Classification}
\catcode`\@=13

\subjclass[2010]{Primary 18C15; Secondary 18C15, 18N10, 18E10}
\keywords{abelian category; $2$-monad; 
(co)lax algebra; distributive law; lax rewriting rule.}

\begin{document}
\baselineskip 17pt plus 2pt minus 1pt

\begin{abstract}
 We show that abelian categories can be characterized as structures
 consisting of a~colax algebra and a lax algebra
 connected by a lax mixed rewriting rule. To this end we develop 
a~theory of  lax rewriting rules for pairs of lax-lax and
 colax-lax algebras over $2$-monads.
\end{abstract}

\thanks{M.M. supported by  RVO: 67985840.}

\maketitle

\tableofcontents

\section*{Introduction}

A recurring theme in category theory is that properties of
mathematical objects can often be replaced by algebraic structures. Familiar
examples include finite products, limits, fibrations, or factorization systems,
all of which can be described as algebras for suitable
2-monads. This raises the question whether more subtle categorical
notions, such as exactness properties, can also be understood
algebraically.

The axioms of abelian categories involve
the existence and interaction of kernels, cokernels and binary
products. Our aim is to replace these property-based axioms by explicit
algebraic structures together with a coherent compatibility
conditions. Our main result achieves precisely this.

The idea of algebraic characterization of abelian categories emerged
from the preprint~\cite{bivar} of the first author. In
\cite{pasticio} we proved that a pointed category has kernels if
and only if it is a~normalized lax algebra for
the arrow $2$-monad. Then, dually, a pointed category has cokernels
if and only if it carries the structure of a normalized colax algebra
for the `complementary' arrow $2$-monad. Here we study the
interaction of these two algebraic structures via a notion of lax 
distributivity of monad algebras, which we introduce and 
call a \textit{lax rewriting rule},
and prove~that

\begin{center}
\textit{abelian categories are precisely additive categories equipped with
normalized colax and lax algebra structures for two arrow category
$2$-monads satisfying an appropriate lax rewriting rule.}
\end{center}

We now briefly describe the algebraic framework underlying this
result.  The theory of monads and their algebras is classical and
provides a uniform language for describing algebraic and
categorical structures.
Beyond individual algebraic theories, 
Beck's distributive law of monads~\cite{Beck} describes how different algebraic
structures interact and combine into a new one.  If $\T_1$ and $\T_2$
are monads on a category $\C$, a distributive law
\[
\lambda \colon \T_1\T_2 \Longrightarrow \T_2\T_1
\]
allows the composite $\T_{21}:=\T_2\T_1$ to inherit a monad
structure. Its algebras are
objects carrying both a $\T_1$-algebra and a
$\T_2$-algebra structure satisfying a compatibility condition, called
the~\textit{rewriting rule}.
A~prototypical example is that of rings; if $\T_1$ is the free
monoid monad and $\T_2$ the free abelian group monad on
sets, the algebras for the composite monad~$\T_{21}$ are rings.

An interesting natural distributive law arises 
in the $2$-categorical setting. Let
$\Ar$ denote the $2$-monad sending a category $\C$ to its arrow
category. Then $\Ar^2 \C $ is
the category of commutative squares
\hphantom{.}\vskip -3em
\[
\xymatrix{a \ar[r]^{h_0} \ar[d]_{f}  & c \ar[d]^{g}
\\
b  \ar[r]^{h_1} & d.
}
\]
and $\Ar$ distributes over itself via the involution that 
transposes such a square along its main diagonal
\hphantom{.}\vskip -3em 
\[
\xymatrix{a \ar[r]^{h_0} \ar[d]_{f}  & c \ar[d]^{g}
\\
b  \ar[r]^{h_1} & d
}
\raisebox{-1.8em}{ $\ \xmapsto{\lambda} \ $}
\xymatrix{a \ar[r]^{f} \ar[d]_{h_0}  & b \ar[d]^{h_1}
\\
c  \ar[r]^{g} & d\ .} 
\]
Although elementary, this distributive law turns out to govern 
our algebraic characterization of abelian categories.

While distributive laws of monads are well understood, much less
is known in the $2$-ca\-te\-gorical setting about distributivity at the
level of their algebras. Here 
new phenomena arise from the coexistence of colax and
lax algebra structures. To distinguish these two levels, we reserve
the term \textit{distributive law} for the classical
monads, and use the term \textit{rewriting rule}\/ for the
corresponding compatibility between their algebras. Throughout this
paper, the distributive laws of monads are strict and
invertible, whereas the rewriting rules are
allowed to be~lax.

As in the strict case, a pair of lax $\T_i$-algebras
$(A,\alpha_i,\phi_i)$, $i=1,2$, satisfying a lax rewriting rule,
determines a lax algebra for the composite monad
$\T_{21}$. The converse, however, fails in general:
a lax algebra for the
composite monad need not decompose into a pair of  lax
algebras. More importantly, it is natural to consider also rewriting
rules relating a colax algebra to a lax algebra. In this situation
there is no evident notion of a composite algebra, yet the
compatibility remains meaningful and captures
essential structural information. Finally, we express this
compatibility by lifting one monad to the $2$-category of
(co)lax algebras for the other.

The present article is the 
second in a series begun in~\cite{pasticio}. The final paper will
characterize abelian categories in terms of the double d\'ecalage of
the categorical nerve.

\vskip .5em
\noindent 
{\bf Plan of the paper.}
{\em Section~\ref{Jarka se vrati ze Sardinie az ve stredu.}\/}
recalls the classical distributive laws of monads and
rewriting rules between their algebras. {\em Section~\ref{section:lax_lax}}\
develops the theory of lax rewriting rules for pairs of lax algebras, while
{\em Section~\ref{Zitra jedem na chalupu zase kvuli te strese.}}\ 
extends it to mixed rewriting rules relating colax and lax
algebras. {\em Section~\ref{Moc ji to tam nesedlo.}}\ reviews our monadic
approach to kernels and cokernels. 
In {\em Section~\ref{section:ker_coker}}\ we study the
lax rewriting rule arising from
kernels and cokernels. {\em Section~\ref{Pres vikend to htozne foukalo.}}\
presents our characterization
of abelian categories. The three sections of the {\em Appendix}\ recall
colax and lax algebras and their morphisms,
and contain proofs too technical to include in the main text. The main
results of the article are  Theorem~\ref{Ozivil jsem Jarcino
  auto.}, Proposition~\ref{proposition:pre-abelian} and
Theorem~\ref{Dnes jsme s Jarkou pekne zmokli.}.

\vskip .5em
\noindent 
{\bf Conventions.}
All $2$-categories featured in this article are strict, and all their (co)lax
algebras are normalized, meaning that all unit laws are
satisfied strictly. The dot ``$\cdot$'' denotes the
horizontal composition, while the box ``$\Box$'' denotes 
the vertical composition in a $2$-category.
The symbol $\id$, with or without
a subscript, denotes---depending on the context---the identity
endomorphism, functor, or transformation.

\section{Distributive laws and rewriting rules}
\label{Jarka se vrati ze Sardinie az ve stredu.}

In this brief section, we recall some classical material on
distributive laws between monads and rewriting rules for the
associated algebras, following~\cite[Section~3]{fox-markl:ContM97}. In
the subsequent sections, we generalize these results to various
(co)lax settings.

Assume that
$\T_i = (\T_i,\mu_i,\eta_i)$ are
monads on a category $\C$, with multiplications  $\mu_i : \T_i^2
\Rightarrow  \T_i$ and units $\eta_i : \id_\C \Rightarrow \T_i$, $i=1,2$.
Given a natural transformation $\lambda : \T_1   \T_2 \Rightarrow \T_2
\T_1$, we denote by
\[
\Lambda : \T^m_1\T^n_2 \Longrightarrow  \T^n_2\T^m_1, \ m,n \geq 0,
\]
the natural transformation built from repeated application of
$\lambda$.

\begin{definition}[Definition 3.1 of \cite{fox-markl:ContM97}]
\label{Pujde dnes k ochotnikum?}
A {\/\em distributive law\/} of $\T_1$  over $\T_2$ is a natural
transformation   $\lambda :  \T_1   \T_2 \Rightarrow \T_2 \T_1$
satisfying
\begin{subequations}
\label{Jarka se vratila z Kolumbie.}
\begin{align}
\Lambda \cdot \mu_1\T_2 = \T_2 \mu_1 \cdot \Lambda, 
\hskip 2em
&\Lambda \cdot \eta_1\T_2 = \T_2 \eta_1, \ \hbox{ and}
\\
\Lambda \cdot \T_1\mu_2 = \mu_2 \T_1 \cdot \Lambda,
\hskip 2em
& \Lambda \cdot \T_1\eta_2 = \eta_2 \T_1.
  \end{align}
\end{subequations}
\end{definition}

In the situation of Definition~\ref{Pujde dnes k ochotnikum?}, one can form
the composite monad $\T_{21} = (\T_{21},\mu_{21},\eta_{21})$
defined by
\[
\T_{21} :=  \T_2\T_1, \ \mu_{21} := \mu_2 \T_1 \cdot \T^2_2 \mu_1 \cdot \Lambda, \
\hbox {and } \eta_{21} := \eta_2\T_1 \cdot \eta_1. 
\]
Throughout this article, we assume that $\lambda$, and hence also
$\Lambda$, is an isomorphism. This level of generality is sufficient
for our applications and significantly simplifies the notation in
several places.
We may therefore omit the $\Lambda$'s from the notation and freely
commute $\T_1$ and $\T_2$ past one another. Consequently,
equations~(\ref{Jarka se vratila z Kolumbie.}) can be written as
\begin{align}
\label{Jdu si koupit podlozky.}
\mu_1\T_2 = \T_2 \mu_1,
\hskip .5em
\eta_1\T_2 = \T_2 \eta_1, \hskip .5em   
\T_1\mu_2 = \mu_2 \T_1, \
\hbox { and } \
&\T_1\eta_2 = \eta_2 \T_1.
\end{align}
  Note that the inverse $\lambda^{-1}$ of the invertible distributive law $\lambda$ is also a distributive law.
It is simple to verify that,  for monadic 
algebras $\alpha_1 : \T_1 A \to A$ and  
$\alpha_2 : \T_2 A \to A$ satisfying {\/\em the rewriting rule\/}
\begin{equation}
\label{Jaruska ma uzeh.}
\alpha_1 \cdot \T_1\alpha_2 = \alpha_2 \cdot \T_2 \alpha_1 \cdot \lambda ,
\end{equation}
the formula 
\begin{equation}
\label{Dnes divadlo vynecha.}
\alpha := \alpha_2 \cdot \T_2\alpha_1  : \T_{21} A = \T_2 \T_1 A
\longrightarrow A
\end{equation}
defines a monadic
$\T_{21}$-algebra $\alpha : \T_{21} A \to A$.
Further, 
the formula
\begin{equation}\label{equation:lifted_T_2}
\widetilde{\T}_2(A, \alpha_1)=(\T_2A,\T_2\alpha_1\cdot\lambda)
\end{equation}
defines a $\T_1$-algebra, and the monad ${\T}_2$ lifts to the monad
$\widetilde{\T}_2$ on the category of $\T_1$-algebras.  Symmetrically,
the formula
\begin{equation}\label{equation:lifted_T_1}
\widetilde{\T}_1(A, \alpha_2)=(\T_1 A,\T_1\alpha_2\cdot\lambda^{-1})
\end{equation}
defines a $\T_2$-algebra and $\widetilde{\T}_1$ is denotes the lifted
monad on $\T_2$-algebras. Recall the following standard

\begin{proposition}
\label{proposition:known_characterization}
The following data are equivalent.
\begin{enumerate}[topsep=0pt, partopsep=0pt, itemsep=0pt, itemindent=-1em]
\item 
A pair consisting of a $\T_1$-algebra $(A,\alpha_1)$ 
and a $\T_2$-algebra $(A,\alpha_2)$ satisfying the rewriting 
rule~\eqref{Jaruska ma uzeh.}.
\item 
A pair consisting of a $\T_1$-algebra $(A,\alpha_1)$ and a $\T_2$-algebra 
$(A,\alpha_2)$, such that $\alpha_2$ is a morphism of \ 
$\T_1$-algebras and $\alpha_1$ is a morphism of \ $\T_2$-algebras.
\item 
A $\widetilde{\T}_2$-algebra $\big((A,\alpha_1),\alpha_2\big)$, 
where $\widetilde{\T}_2$ is the 
lifted monad on the category of \ $\T_1$-algebras.
\item 
A $\widetilde{\T}_1$-algebra $\big((A,\alpha_2),\alpha_1\big)$,
where $\widetilde{\T}_1$ is the lifted monad on the category of \ $\T_2$-algebras.
\item 
A $\T_{21}$-algebra $(A,\alpha_2\cdot \T_2\alpha_1)$ for the composite monad \ $\T_{21}$.
\end{enumerate}
\end{proposition}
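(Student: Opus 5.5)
I would prove the equivalences by treating (1) as the hub and showing (1)$\Leftrightarrow$(2), (2)$\Leftrightarrow$(3), (2)$\Leftrightarrow$(4), and (1)$\Leftrightarrow$(5). Throughout I use the simplified notation~(\ref{Jdu si koupit podlozky.}), in which $\lambda$ is suppressed.

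\emph{(1)$\Leftrightarrow$(2).} By~\eqref{equation:lifted_T_2}, the statement that $\alpha_2 : \T_2A \to A$ is a morphism of $\T_1$-algebras from $(\T_2A,\T_2\alpha_1\cdot\lambda)$ to $(A,\alpha_1)$ reads $\alpha_1\cdot\T_1\alpha_2 = \alpha_2\cdot\T_2\alpha_1\cdot\lambda$, which is exactly~\eqref{Jaruska ma uzeh.}. Similarly, by~\eqref{equation:lifted_T_1}, $\alpha_1$ being a morphism of $\T_2$-algebras reads $\alpha_2\cdot\T_2\alpha_1 = \alpha_1\cdot\T_1\alpha_2\cdot\lambda^{-1}$. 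This is the same equation once we use that $\lambda$ is invertible. I should also check that the lifted structures are genuine algebras. For associativity of $\T_2\alpha_1\cdot\lambda$ I use naturality of $\lambda$, the equation $\mu_1\T_2=\T_2\mu_1$, and associativity of $\alpha_1$; the unit law follows from $\eta_1\T_2=\T_2\eta_1$. Checking that $\widetilde{\T}_2$ is a monad on $\T_1$-algebras amounts to verifying that $\mu_2$ and $\eta_2$ are $\T_1$-algebra morphisms, which is again~(\ref{Jdu si koupit podlozky.}).

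\emph{(2)$\Leftrightarrow$(3), (4).} A $\widetilde{\T}_2$-algebra on $(A,\alpha_1)$ is a $\T_1$-algebra morphism $\alpha_2:\widetilde{\T}_2(A,\alpha_1)\to(A,\alpha_1)$ satisfying the unit and associativity laws. Since the forgetful functor is faithful and $\widetilde{\T}_2$ has $\mu_2,\eta_2$ as its structure maps, those laws are exactly the $\T_2$-algebra axioms on $\alpha_2$. So (3) is the same as requiring that $\alpha_2$ is a $\T_2$-algebra and a $\T_1$-morphism. By the previous paragraph, this is equivalent to (2). Statement (4) follows symmetrically, using the distributive law $\lambda^{-1}$.

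\emph{(1)$\Leftrightarrow$(5).} This is the main computation. For (1)$\Rightarrow$(5), the unit law is
\[
\alpha\cdot\eta_{21} = \alpha_2\cdot\T_2\alpha_1\cdot\eta_2\T_1\cdot\eta_1 = \alpha_2\cdot\eta_2\cdot\alpha_1\cdot\eta_1 = \id,
\]
using naturality of $\eta_2$. For associativity, write
\[
\alpha\cdot\mu_{21} = \alpha_2\cdot\T_2\alpha_1\cdot\mu_2\T_1\cdot\T_2^2\mu_1\cdot\Lambda.
\]
I would push $\mu_2$ and $\mu_1$ through using naturality together with associativity of $\alpha_1$ and $\alpha_2$. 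The middle swap $\T_1\T_2\to\T_2\T_1$ is then removed by one application of the rewriting rule, which gives
\[
\alpha\cdot\T_{21}\alpha = \alpha_2\cdot\T_2\alpha_1\cdot\T_2\T_1\alpha_2\cdot\T_2\T_1\T_2\alpha_1.
\]
For (5)$\Rightarrow$(1), I set $\alpha_1 := \alpha\cdot\eta_2\T_1$ and $\alpha_2 := \alpha\cdot\T_2\eta_1$. These are algebras because $\eta_2\T_1$ and $\T_2\eta_1$ are monad morphisms $\T_1\to\T_{21}$ and $\T_2\to\T_{21}$, which follows from the distributive law axioms. Next, $\alpha_2\cdot\T_2\alpha_1 = \alpha$, by associativity of $\alpha$ applied to the composite $\T_2\eta_1\T_2\T_1\cdot\T_2\T_1\eta_2\T_1$ followed by $\mu_{21}$, which reduces to the identity. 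To get the rewriting rule, I precompose the associativity square of $\alpha$ with $\eta_2\T_1\T_2\eta_1 : \T_1\T_2\to\T_{21}^2$; then $\mu_{21}$ restricted along this map is $\lambda$. I expect this bookkeeping with $\Lambda$, and the identification of $\mu_{21}$ along the inserted units, to be the main obstacle. Finally, the two constructions are mutually inverse by the unit laws.
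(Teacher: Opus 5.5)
Your proposal is correct. The paper gives no proof of this proposition: it recalls it as the standard result on distributive laws (Beck, following Fox--Markl), and your hub-and-spoke argument, including the reconstruction $\alpha_1=\alpha\cdot\eta_2\T_1$, $\alpha_2=\alpha\cdot\T_2\eta_1$ and the identity $\mu_{21}\cdot(\eta_2\T_1\ast\T_2\eta_1)=\lambda$, is the usual proof of that result.

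One slip to fix. In the step $\alpha_2\cdot\T_2\alpha_1=\alpha$, the composite you wrote, $\T_2\eta_1\T_2\T_1\cdot\T_2\T_1\eta_2\T_1$, does not typecheck. First use naturality of $\eta_1$ to move $\T_2\eta_1$ past $\T_2\alpha$. The composite you then need is $\T_2\eta_1\T_2\T_1\cdot\T_2\eta_2\T_1\colon \T_2\T_1\to\T_{21}^2$. Applying $\mu_{21}$ to it gives the identity, by $\lambda\cdot\eta_1\T_2=\T_2\eta_1$ and the unit laws for $\mu_1$ and $\mu_2$.
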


A similar characterization is available for algebra morphisms; a
morphism is a $\T_{21}$-algebra morphism if and only if it
is both a $\T_1$- and $\T_2$-algebra morphism.

\section{Lax rewriting rules of lax algebras}
\label{section:lax_lax}

In this section we generalize  the rewriting rule recalled 
in~\eqref{Jaruska ma uzeh.} to lax algebras 
for a pair of $2$-monads related by a
distributive law. Everything will be
formulated for $2$-monads on a general $2$-category $\K$, although the
main application of the paper---the 
characterization of abelian categories---uses only 
the special cases in which $\K$ is the
$2$-category of categories, or the
$2$-category of coalgebras for a 2-comonad. 
We restrict ourselves to the normalized
case, as this suffices for our purposes.

\begin{definition}
\label{definition:lax_lax_rewriting} 
Assume
that $\T_1$ and $\T_2$ are $2$-monads on a 2-category $\K$, bound by a distributive law $\lambda :  \T_1   \T_2 \Rightarrow \T_2 \T_1$, and let
$\alpha_i : \T_i A \to A$, $i=1,2$, be (normalized) lax 
$\T_i$-algebras with associativity
constraints  
\[
\phi_i\colon \alpha_i \cdot \T_i \alpha_i
\Longrightarrow \alpha_i \cdot \mu_i
\]
in
\[
\xymatrix@R=2.5em@C=2.5em{
\T^2_iA  \ar[r]^{\T_i \alpha_i} \ar[d]_{\mu_i} 
& \T_iA \ar[d]^{\alpha_i} \ar@{=>}[dl]|{\phi_i\rule{.1em}{0pt}}
\\
\T_iA \ar[r]^{\alpha_i} & A,
}
\]
satisfying the standard coherence conditions expressed by the equalities
\begin{subequations}
\begin{equation}
\label{Kdy bude zas letadelko v poradku?}
\xymatrix@R=1.2em@C=2em{
\T_i^3 A  \ar[rr]^{\T_i^2 {\alpha_i}}\ar[dd]_{\mu_i\T_i} && \T_i^2 A  \ar[rd]^{\T_i {\alpha_i}} \ar[dd]_{\mu_i}
\\
&&& \T_i A  \ar[dd]^{\alpha_i} \ar@{=>}[ld]|{\phi_i\rule{.1em}{0pt}}
\\
\T_i^2 A   \ar[rr]^{\T_i {\alpha_i}} \ar[rd]_{\mu_i} && \T_i A 
\ar@{=>}[ld]|{\phi_i\rule{.1em}{0pt}}
\ar[dr]^{\alpha_i}
\\
&\T_i A  \ar[rr]^{\alpha_i} && A
}
\hskip 1em
\raisebox{-3em}{$=$} \hskip 1em
\xymatrix@R=1.2em@C=2em{
\T_i^3 A  \ar[rr]^{\T_i^2 {\alpha_i}}\ar[dd]_{\mu_i\T_i} \ar[rd]^(.6){\T_i\mu_i}   && 
\T_i^2 A  \ar[rd]^{\T_i {\alpha_i}} \ar@{=>}[ld]|(.45){\T_i{\phi_i\rule{.1em}{0pt}}}
\\
&\T_i^2 A   \ar[rr]^{\T_i {\alpha_i}}   \ar[dd]_{\mu_i}&& \T_i A  \ar[dd]^{\alpha_i} 
\ar@{=>}[lldd]|{\phi_i\rule{.1em}{0pt}}
\\
\T_i^2 A  \ar[rd]_{\mu_i} &&
\\
&\T_i A  \ar[rr]^{\alpha_i} && \ A,
}
\end{equation}
\begin{align*}
    \phi\cdot\T_i\eta_i&=\id_{\alpha_i}, \ \hbox { and}
\\
    \phi\cdot\eta_i\T_i&=\id_{\alpha_i},
\end{align*}
of $2$-cells, $i=1,2$. 
 A lax rewriting rule of a lax $\T_1$-algebra $(A, \alpha_1,\phi_1)$ over a lax $\T_2$-algebra $(A, \alpha_2,\phi_2)$ is a 2-cell $\psi$ filling the pentagon
\begin{equation}\label{Vcera jsme byli na ujetem filmu.}
\xymatrix@C=2.7em@R=1.2em{
\T_1\T_2 A \ar[r]^{\T_1\alpha_2} \ar[d]_\lambda^\cong    
& \T_1 A \ar[dd]^{\alpha_1}   \ar@{=>}[ddl]|{\psi\rule{.1em}{0pt}}
\\
\T_2 \T_1 A
\ar[d]_{\T_2\alpha_1} 
\\
\T_2A   \ar[r]^{\alpha_2}    &A 
}
\end{equation}
\end{subequations}
such that the following four equalities of \/ $2$-cells hold:
\begin{subequations}\label{V halal bistru je vzdy dobre jidlo.}
\begin{equation}
\label{B}
%VRCHOLY
\def\OOO{{\ttt12}A}      
\def\OIO{{\ttt02}A}     
\def\OII{{\ttt01}A}      
\def\III{A}        
\def\IOO{{\ttt11}A}      
\def\IOI{{\ttt01}A}    
\def\IIO{{\ttt01}A}         
\def\OOI{{\ttt11}A}     
%HRANY
\def\OAO{{\ttt02}\alpha_2}  
\def\OIA{{\ttt01}\alpha_1}  
\def\AII{\alpha{_1}} 
\def\AOO{{\ttt10}\mu_1}  
\def\IOA{{\ttt10}\alpha{_1}} 
\def\IAI{\alpha_2}  
\def\IAO{{\ttt01}\alpha{_2}}
\def\IIA{\alpha{_1}}
\def\AIO{\mu_1}
\def\OAI{{\ttt01}\alpha{_2}} 
\def\AOI{{\ttt10}\alpha_1} 
\def\OOA{{\ttt11}\alpha{_1}}
%STENY
\def\BB{}
\def\NN{{\ttt01}\psi}
\def\WW{{\ttt10}\phi_1}
\def\FF{\psi}
\def\SS{\psi}
\def\EE{\phi_1}
  \xymatrix@R=1.2em@C=2.2em{
    \OOO \ar[rr]^{\OAO}\ar[dd]_{\AOO}
    &&\OIO \ar[dd]^{\AIO} \ar[rd]^{\OIA}
    %\ar@{=>}[lldd]|{\BB}
    \\
    &&& \OII  \ar[dd]^{\AII}
   \ar@{=>}[ld]|{\EE\rule{.1em}{0pt}}
    \\
    \IOO\ar[rr]^{\IAO} \ar[rd]_{\IOA} && \IIO
    \ar@{=>}[ld]|{\SS\rule{.1em}{0pt}}
    \ar[dr]^{\IIA}
    \\
    &\IOI \ar[rr]^{\IAI} && \III,
  }
  \hskip .1em
  \raisebox{-3em}{$=$} \hskip .1em
  \xymatrix@R=1.2em@C=2em{
    \OOO \ar[rr]^{\OAO}\ar[dd]_{\AOO} 
    \ar[rd]^(.6){\OOA}   && 
    \OIO \ar[rd]^{\OIA} 
\ar@{=>}[ld]|\NN
    \\
    &\OOI \ar[rr]^{\OAI}   \ar[dd]_{\AOI}
    \ar@{=>}[ld]|{\WW\rule{.1em}{0pt}}
    && \OII 
    \ar[dd]^{\AII} 
    \ar@{=>}[lldd]|{\FF\rule{.1em}{0pt}}
    \\
    \IOO\ar[rd]_{\IOA} &&
    \\
    &\IOI \ar[rr]^{\IAI} &&\III,
  }
\end{equation}
\def\OOO{{\ttt21 A}}      \def\OAO{{\ttt11}\alpha{_2}}  \def\BB{{\ttt10}\psi{}}
\def\OIO{{\ttt11 A}}      \def\OIA{{\ttt01}\alpha{_2}}  \def\EE{\psi} 
\def\OII{{\ttt01 A}}      \def\AII{\alpha{_1}}  \def\SS{\phi_2}
\def\IIO{{\ttt10} A}      \def\AOO{{\ttt 20}\alpha{_1}}  \def\NN{{\ttt01}\phi_2}
\def\III{A}      \def\IAI{\alpha{_2}}  \def\WW{\psi}
\def\IOO{{\ttt20}A}      \def\IOA{\mu{_2}}  \def\FF{\psi}
\def\IOI{{\ttt10}A}      \def\IIA{\alpha{_2}}
\def\OOI{{\ttt11}A}      \def\AIO{{\ttt10}\alpha{_1}}
\def\IAO{{\ttt10}\alpha{_2}}
\def\OOA{{\ttt01}\mu_2}
\def\AOI{{\ttt10}\alpha{_1}} 
\def\OAI{{\ttt01}\alpha{_2}}
\begin{equation}
\label{D}
  \xymatrix@R=1.2em@C=2em{
    \OOO \ar[rr]^{\OAO}\ar[dd]_{\AOO}
    &&\OIO \ar[dd]^{\AIO} \ar[rd]^{\OIA}
    \ar@{=>}[lldd]|{\BB}
    \\
    &&& \OII  \ar[dd]^{\AII}
    \ar@{=>}[ld]|{\EE\rule{.1em}{0pt}}
    \\
    \IOO\ar[rr]^{\IAO} \ar[rd]_{\IOA} && \IIO
    \ar@{=>}[ld]|{\SS\rule{.1em}{0pt}}
    \ar[dr]^{\IIA}
    \\
    &\IOI \ar[rr]^{\IAI} && \III,
  }
  \hskip .1em
  \raisebox{-3em}{$=$} \hskip .1em
  \xymatrix@R=1.2em@C=1.5em{
    \OOO \ar[rr]^{\OAO}\ar[dd]_{\AOO} 
    \ar[rd]^(.6){\OOA}   && 
    \OIO \ar[rd]^{\OIA} \ar@{=>}[ld]|\NN
    \\
    &\OOI \ar[rr]^{\OAI}   \ar[dd]_{\AOI}
   % \ar@{=>}[ld]|{\WW\rule{.1em}{0pt}}
    && \OII 
    \ar[dd]^{\AII} 
    \ar@{=>}[lldd]|{\FF\rule{.1em}{0pt}}
    \\
    \IOO\ar[rd]_{\IOA} &&
    \\
    &\IOI \ar[rr]^{\IAI} && \III,
  }
\end{equation}
\begin{align}
%\begin{equation}
\label{equation:lax_distr_condition_3}
\psi\cdot\T_2\eta_1(A)&=\id_{\alpha_2}, \ \hbox { and}
%\end{equation}
\\
%\begin{equation}
\label{equation:lax_distr_condition_4}
\psi\cdot\T_1\eta_2(A)&=\id_{\alpha_1}.
%\end{equation}
\end{align}
\end{subequations}
\end{definition}

\begin{remark}
Later, in Section~\ref{Zitra jedem na chalupu zase kvuli te strese.}, 
we describe lax rewriting rules for a colax algebra over a lax
algebra. Our conventions are designed for the situation where
$\alpha_1 : \T_1 A \to A$ is the cokernel functor (a colax
$\T_1$-algebra) and $\alpha_2 : \T_2 A \to A$ is the kernel functor
(a lax $\T_2$-algebra), cf.~Section~\ref{section:ker_coker}. In~the
strict case, assuming as everywhere that $\lambda$ is invertible, instead of~(\ref{Dnes divadlo vynecha.}), we could also
define the composite $\T_{21}$-algebra by
\begin{equation}
\label{Je ctvrtek a uz melu z posledniho.}
\alpha := \alpha_1 \cdot \T_1  \alpha_2 \cdot \lambda^{-1} : 
 \T_{21}A = \T_2 \T_1 A
\longrightarrow A
\end{equation}
since it is, by~(\ref{Jaruska ma uzeh.}), the same
as~(\ref{Dnes divadlo vynecha.}). However, in the lax case the formulas differ
and only one is correct, depending on the direction
of the arrow in~(\ref{Vcera jsme byli na ujetem filmu.}). If we choose
\[
\xymatrix@C=2.7em@R=1.2em{
\T_1\T_2 A \ar[r]^{\T_1\alpha_2} \ar[d]_\lambda^\cong    
& \T_1 A \ar[dd]^{\alpha_1}   \ar@{<=}[ddl]|{\psi\rule{.1em}{0pt}}
\\
\T_2 \T_1 A
\ar[d]_{\T_2\alpha_1} 
\\
\T_2A   \ar[r]^{\alpha_2}    &A 
}
\]
instead, then~(\ref{Je ctvrtek a uz melu z posledniho.}) will be the
correct one. 
  
\end{remark}
%Assume
%that $\T_1$ and $\T_2$, and therefore also $\T$, are $2$-monads,
%$\alpha_1 : \T_1 A \to A$, 
%$\alpha_2 : \T_2 A \to A$ are lax monadic algebras, and~(\ref{Jaruska ma
 % uzeh.}) is replaced by a natural transformation $\psi$ in the diagram 
%\begin{equation}
%\label{Vcera jsme byli na ujetem filmu.}
%\xymatrix@C=2.7em@R=1.2em{
%\T_2\T_1 A \ar[r]^{\T_2\alpha_1} \ar[d]_\lambda^\cong    
%& \T_2 A \ar[dd]^{\alpha_2}   \ar@{=>}[ddl]|{\psi\rule{.1em}{0pt}}
%\\
%\T_1 \T_2 A
%\ar[d]_{\T_1\alpha_2} 
%\\
%\T_1A   \ar[r]^{\alpha_1}    &A .
%}
%\end{equation}
Our goal is to find a lax analogue of 
Proposition~\ref{proposition:known_characterization}. 
Let us first show that the composite
\hbox{$\alpha \colon \T_{21}A \to A$} in~(\ref{Dnes divadlo vynecha.}) with the lax rewriting rule $\psi$ of \eqref{Vcera jsme byli na ujetem filmu.} gives a lax
$\T_{21}$-algebra. 

\begin{theorem}
\label{Ozivil jsem Jarcino auto.}
Let $\psi$ be a lax rewriting rule of lax $\T_i$-algebras $\alpha_i : \T_i A \to A$, $i=1,2$, as in Definition~\ref{definition:lax_lax_rewriting}.
Then the composite
\[
\alpha := \alpha_2 \cdot \T_2\alpha_1  : \T_{21} A = \T_2 \T_1 A
\longrightarrow A
\]
introduced in~\eqref{Dnes divadlo vynecha.} is a lax $\T_{21}$-algebra with 
the associativity constraint
\begin{equation}
    \label{Martin prijel do Brna}
\raisebox{-5em}{$\phi_{21} :=$ \ } 
\xymatrix@R=3.2em@C=3.2em{
{\ttt 22}A  \ar[r]^{{\ttt21}\alpha_1}  
\ar[d]_{{\ttt20}\mu_1}
  & \ar[d]_{{\ttt20}\alpha_1}  
\ar@{=>}[ld]|{{\ttt20} \phi_1\rule{.1em}{0pt}}
 \ar[r]^{{\ttt11}\alpha_2} {\ttt21}A &{\ttt 11}A
 \ar[d]^{{\ttt10}\alpha_1}
 \ar@{=>}[ld]|{{\ttt10} \psi \rule{.2em}{0pt}}
\\
\ar[d]_{\mu_2 {\ttt01}} \ar[r]^{{\ttt20} \alpha_1}
{\ttt 21}A & \ar[r]^{{\ttt 10} \alpha_2}  {\ttt 20}A
\ar[d]_{\mu_2} & {\ttt10}A \ar[d]^{\alpha_2} 
\ar@{=>}[ld] |{\phi_2\rule{.1em}{0pt}}
\\ 
{\ttt 11}A \ar[r]^{{\ttt10}\alpha_1}&  {\ttt10}A  \ar[r]^{\alpha_2}  &A.
}
\end{equation}
\end{theorem}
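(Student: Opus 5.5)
We have to check the two unit laws and the associativity coherence~(\ref{Kdy bude zas letadelko v poradku?}) for $\alpha = \alpha_2\cdot\T_2\alpha_1$ with the constraint $\phi_{21}$ of~(\ref{Martin prijel do Brna}). Recall that $\mu_{21} = \mu_2\T_1\cdot\T_2\mu_1\cdot\T_2\lambda\T_1$ and $\eta_{21}=\eta_2\T_1\cdot\eta_1$. Since $\lambda$ is invertible, the equations~(\ref{Jdu si koupit podlozky.}) let us suppress it throughout, so that every identity below is an equality of pasting diagrams built from $\phi_1$, $\phi_2$, $\psi$, their whiskerings by $\T_1,\T_2$, and naturality squares. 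The proof is thus a sequence of pasting manipulations, which I would organize as follows.

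\emph{Unit laws.} We need $\phi_{21}\cdot\T_{21}\eta_{21}=\id_\alpha$ and $\phi_{21}\cdot\eta_{21}\T_{21}=\id_\alpha$. Whiskering~(\ref{Martin prijel do Brna}) by $\eta_{21}\T_{21}=\eta_2\T_1\T_2\T_1\cdot\eta_1\T_2\T_1$ acts cell by cell.
\begin{itemize}
\item The cell $\T_2\phi_1$ becomes $\T_2(\phi_1\cdot\eta_1\T_1)=\id$.
\item The cell $\T_2\psi$ becomes $\T_2(\psi\cdot\T_2\eta_1)$ after commuting $\eta_1$ past $\T_2$ using $\eta_1\T_2=\T_2\eta_1$, and this is an identity by~(\ref{equation:lax_distr_condition_3}).
\item The outer $\eta_2$ turns the $\phi_2$ cell into $\phi_2\cdot\eta_2\T_2=\id$; the naturality of $\eta_2$ is used to slide it past $\T_2\alpha_1$.
\end{itemize}
The other unit law is symmetric: it uses $\phi_1\cdot\T_1\eta_1=\id$, $\phi_2\cdot\T_2\eta_2=\id$, and~(\ref{equation:lax_distr_condition_4}) on the $\psi$ cell.

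\emph{Associativity.} Both sides of the coherence live over $\T_{21}^3A=\T_2\T_1\T_2\T_1\T_2\T_1A$. Each side is a pasting of two copies of $\phi_{21}$, one of them whiskered by $\T_{21}$ or $\mu_{21}$; each copy contributes one $\phi_1$, one $\psi$ and one $\phi_2$. I would first commute all the $\T$'s so that the source reads $\T_2^3\T_1^3A$, and then view both sides as fillings of the same boundary. The strategy is a braid or bubble-sort argument that moves all $\phi_1$-type cells toward the top-left and all $\phi_2$-type cells toward the bottom-right.

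\begin{itemize}
\item Where a $\psi$ cell is adjacent to a $\phi_1$ cell (a $\T_1$-multiplication passing through $\alpha_2$), we use~(\ref{B}) to replace ``$\phi_1$ then $\psi$'' by ``$\psi$, $\T_1\psi$, $\T_1\phi_1$'' and conversely.
\item Where a $\psi$ cell is adjacent to a $\phi_2$ cell, we use~(\ref{D}) in the same way.
\item Where two $\phi_1$ cells meet, the coherence~(\ref{Kdy bude zas letadelko v poradku?}) for $i=1$ is applied, suitably whiskered by $\T_2^k$; the same holds for two $\phi_2$ cells with $i=2$.
\item Cells that do not interact are interchanged by the interchange law and by naturality of $\phi_i$, $\psi$, $\mu_i$ with respect to the 1-cells $\alpha_j$.
\end{itemize}

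Concretely, the three ``layers'' of $\T_2\T_1$ produce a $3\times 3$ grid of rewrites. I would verify the equality by reducing both sides to a normal form, say with all $\T_1$-associativity performed first, then all rewritings, then all $\T_2$-associativity. Each side reaches this form via a bounded number of applications of~(\ref{B}) and~(\ref{D}), with (\ref{B}) applied once and (\ref{D}) applied once, plus the $\phi_i$-coherences.

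The main obstacle is this associativity step. There are no new ideas in it, but the bookkeeping of whiskerings is delicate. In particular, we must confirm that the suppressed $\lambda$'s really compose coherently: the distributive law axioms~(\ref{Jarka se vratila z Kolumbie.}) are needed exactly when $\mu_{21}$ is expanded inside $\T_{21}\mu_{21}$ versus $\mu_{21}\T_{21}$. I would first do the strict sanity check, in which all cells are identities and the computation reduces to the classical proof of Proposition~\ref{proposition:known_characterization}. This fixes the shape of the pasting diagram, and the $2$-cells are then placed into the cube faces. Given the length, I would likely relegate the full diagrams to an appendix.
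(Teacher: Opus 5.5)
Your proposal is correct and follows essentially the paper's route. The unit laws are checked cell by cell using $\phi_i\cdot\eta_i\T_i$, $\phi_i\cdot\T_i\eta_i$ and the two normalization conditions on $\psi$, exactly as you describe. The associativity check is your rewriting argument, which the paper organizes as a $2\times2\times2$ subdivision of the big coherence cube into eight subcubes: the coherence cube of $\alpha_1$ under $\T_2^3$, \eqref{B} under $\T_2^2$, \eqref{D} under $\T_2$, the coherence cube of $\alpha_2$, and four cubes commuting by naturality properties of $\mu_2$. There are only notational slips: the cells of $\phi_{21}$ are $\T_2^2\phi_1$, $\T_2\psi$ and $\phi_2$, the right side of \eqref{B} involves $\T_2\phi_1$ rather than $\T_1\phi_1$, and $\mu_{21}=\mu_2\T_1\cdot\T_2^2\mu_1\cdot\T_2\lambda\T_1$.
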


The proof of Theorem~\ref{Ozivil jsem Jarcino auto.} is is postponed
to Appendix~\ref{Ta nova strecha muj program tohoto tydne velmi zahustuje.}. 

\begin{remark}
In general, there is no hope for the converse statement, i.e., for
reconstructing lax $\T_i$-algebras $\alpha_i$ from an arbitrary lax
$\T_{21}$-algebra $\alpha$, unless $\alpha$ is a pseudoalgebra. A more
detailed treatment of (non-invertible) lax rewriting rules for
(unnormalized) lax algebras will be the subject
of~\cite{Stepan-Trnka}.
\end{remark}
We continue our pursuit of equivalent characterizations of lax
rewriting rules.
%, i.e.~finding an analog of Proposition~\ref{proposition:known_characterization}.
Let us recall the definitions of lax and colax morphisms between lax algebras.

\begin{definition}\label{definition:lax_morphism_in_text} Let $\T$ be a 2-monad.
  Let $(A,\alpha_1,\phi_1)$ and $(B,\alpha_2,\phi_2)$ be lax
  $\T$-algebras. A lax morphism
  $$(f,\theta)\colon (A,\alpha_1,\phi_1)\to (B,\alpha_2,\phi_2)$$ is an
arrow $f\colon A\to B$ and a 2-cell
    $$\theta\colon \alpha_2\cdot \T f  \Rightarrow f\cdot \alpha_1$$
    as in
    \begin{equation}
%\label{equation:lax_lax_morphism_2-cell}
\xymatrix@R=2.5em@C=2.5em{
\T A \ar[r]^{\T f} \ar[d]_{\alpha_1} 
& \T B\ar[d]^{\alpha_2} \ar@{=>}[dl]|{\theta\rule{.1em}{0pt}}
\\
A \ar[r]_{f} & B
}
\end{equation}
satisfying the following two conditions:
\begin{itemize}[topsep=0pt, partopsep=0pt, itemsep=0pt, itemindent=-1.5em]
\item[(i)] there is an equality of 2-cells
\begin{equation}\label{equation:lax-lax-cube}
\xymatrix@R=1.2em@C=2em{
\T^2 A \ar[rr]^{\T^2 f}\ar[dd]^{\mu_A}
&&\T^2  B \ar[dd]^{\mu_B} \ar[rd]^{\T\alpha_2}
\\
&&& \T  A  \ar[dd]^{\alpha_2}
\ar@{=>}[ld]|{\phi_2\rule{.1em}{0pt}}
\\
\T  A \ar[rr]^{\T f} \ar[rd]_{\alpha_1} && \T B
\ar@{=>}[ld]|{\theta\rule{.1em}{0pt}}
\ar[dr]^{\alpha_2}
\\
&A \ar[rr]^{f} && B
}
\hskip .5em
\raisebox{-3em}{$=$} \hskip .5em
\xymatrix@R=1.2em@C=2em{
\T^2  A \ar[rr]^{\T^2 f}\ar[dd]^{\mu_A} 
\ar[rd]^(.6){\T \alpha_1}   && 
\T B  \ar[rd]^{\T \alpha_2} \ar@{=>}[ld]|(.45){\T{\theta\rule{.1em}{0pt}}}
\\
&\T A  \ar[rr]^{\T f}   \ar[dd]_{\alpha_1}
\ar@{=>}[ld]|{\T\phi_1\rule{.1em}{0pt}}
&& \T B 
\ar[dd]^{\alpha_2} 
\ar@{=>}[lldd]|{\theta\rule{.1em}{0pt}}
\\
\T A \ar[rd]_{\alpha_1} &&
\\
& A \ar[rr]^{f} && B,
}
\end{equation}
    \item[(ii)] $\theta\cdot\eta_A=\id_f$.
\end{itemize}
\end{definition}

\begin{definition}\label{definition:colax_morphism_in_text}Let $\T$ be a 2-monad. 
Let $(A,\alpha_1,\phi_1)$ and $(B,\alpha_2,\phi_2)$ be lax
$\T$-algebras. A colax morphism
\[
(f,\theta)\colon (A,\alpha_1,\phi_1)\to (B,\alpha_2,\phi_2)
\] 
is a pair consisting of an arrow $f\colon A\to B$ and a 2-cell
\[
\theta\colon f\cdot \alpha_1 \Rightarrow \alpha_2\cdot \T f
\]
as in
\[
\xymatrix@R=2.5em@C=2.5em{
\T A \ar[r]^{\T f} \ar[d]_{\alpha_1} 
& \T B\ar[d]^{\alpha_2} \ar@{<=}[dl]|{\theta\rule{.1em}{0pt}}
\\
A \ar[r]_{f} & B
}
\]
satisfying the following two conditions:
\begin{itemize}[topsep=0pt, partopsep=0pt, itemsep=0pt, itemindent=-1em]
\item[(i)]
there is an equality of 2-cells
\begin{equation}
    \label{equation:colax-lax-cube}
\xymatrix@R=1.2em@C=1.7em{
\T^2 A \ar[rr]^{\T^2 f}\ar[dd]_{\T \alpha_1} && \T^2 B \ar@{<=}[lldd]|{\T\theta\rule{.1em}{0pt}}\ar[rd]^{\mu_B} \ar[dd]_{\T \alpha_2}
\\
&&& \T B \ar[dd]^{\alpha_2} \ar@{<=}[ld]|{\phi_2\rule{.1em}{0pt}}
\\
\T A  \ar[rr]^{\T f} \ar[rd]_{\alpha_1} && \T B
\ar@{<=}[ld]|{\theta\rule{.1em}{0pt}}
\ar[dr]^{\alpha_2}
\\
&A \ar[rr]^f && B
}
\hskip 1em
\raisebox{-3em}{$=$} \hskip 1em
\xymatrix@R=1.2em@C=1.7em{
\T^2 A \ar[rr]^{\T^2 f}\ar[dd]_{{\T \alpha_1}} \ar[rd]^(.6){\mu_A}   && 
\T^2 B \ar[rd]^{\mu_B} 
\\
&\T A \ar@{<=}[ld]|{\phi_1\rule{.1em}{0pt}} \ar[rr]^{\T f}   \ar[dd]_{\alpha_1}&& \T B \ar[dd]^{\alpha_2} 
\ar@{<=}[lldd]|{\theta\rule{.1em}{0pt}}
\\
\T A \ar[rd]_{\alpha_1} &&
\\
&A \ar[rr]^f && B,
}
\end{equation}
\item[(ii)] 
$\theta\cdot\eta_A=\id_f$.
\end{itemize} 
\end{definition}
Now, the formula
\begin{equation}\label{equation:lifted_T_2_lax_lax}
\widetilde{\T}_2(A, \alpha_1,\phi_1)=(\T_2 A,\T_2\alpha_1\cdot\Lambda, \T_2\phi_1\cdot\Lambda)
\end{equation}
defines a lax $\T_1$-algebra, and similarly, the formula 
\begin{equation}\label{equation:lifted_T_1_lax_lax}
\widetilde{\T}_1(A, \alpha_2,\phi_2)=(\T_1 A,\T_1\alpha_2\cdot\Lambda^{-1}, \T_1\phi_2\cdot\Lambda^{-1})
\end{equation}
gives a lax $\T_2$-algebra. 
Ignoring $\lambda$ for a moment, the 2-cell 
\[
\xymatrix{
\T_1\T_2 A \ar[r]^{\T_1\alpha_2}    
\ar[d]_{\T_2\alpha_1} 
& \T_1 A \ar[d]^{\alpha_1}   \ar@{=>}[dl]|{\psi\rule{.1em}{0pt}}
\\
\T_2A   \ar[r]^{\alpha_2}    &A 
}
\hskip 1em
\raisebox{-1.7em}{$=$} \hskip 1em 
\xymatrix{
\T_1\T_2 A \ar[r]^{\T_2\alpha_1}    
\ar[d]_{\T_1\alpha_2} 
& \T_2 A \ar[d]^{\alpha_2}   \ar@{<=}[dl]|{\psi\rule{.1em}{0pt}}
\\
\T_1A   \ar[r]^{\alpha_1}    &A 
}
\]
can be interpreted simultaneously as a part of lax and colax morphism. 
It turns out that the monad $\T_2$ lifts to the 2-category of lax
$\T_1$-algebras, lax morphisms, and their 2-cells by formula~\eqref{equation:lifted_T_2_lax_lax}, as we show in
Proposition~\ref{proposition:lifted_monad} in the Appendix. See also Definition~\ref{definition:2-cell} of a 2-cell of lax algebra morphisms. Similarly,
the monad $\T_1$ lifts to the 2-category of lax $\T_2$-algebras and
their colax morphisms. Finally, we prove

\begin{proposition}
\label{proposition:lax_lax_characterization}
Let $(A, \alpha_i,\phi_i)$ be a lax $\T_i$-algebra, $i=1,2$, and $\psi$ a 2-cell filling the pentagon
\begin{equation}
\label{V nedeli bude ke 40.}
\xymatrix@C=2.7em@R=1.2em{
\T_1\T_2 A \ar[r]^{\T_1\alpha_2} \ar[d]_\lambda^\cong    
& \T_1 A \ar[dd]^{\alpha_1}   \ar@{=>}[ddl]|{\psi\rule{.1em}{0pt}}
\\
\T_2 \T_1 A
\ar[d]_{\T_2\alpha_1} 
\\
\T_2A   \ar[r]^{\alpha_2}    &A 
}
\end{equation}
The following structures are equivalent.
\begin{enumerate}[topsep=0pt, partopsep=0pt, itemsep=0pt, itemindent=-1em]
\item 
A lax rewriting rule of
Definition~\ref{definition:lax_lax_rewriting}, 
i.e.~the 2-cell $\psi$ 
which satisfies the equalities in~\eqref{V halal bistru je vzdy dobre jidlo.}.
\item A pair of a colax morphism of lax $\T_2$-algebras
\[
\widetilde{\T}_1(A,
\alpha_2,\phi_2)\xrightarrow{(\alpha_1,\psi\cdot \lambda^{-1})}(A, \alpha_2,\phi_2)
\]
and a lax morphism of lax $\T_1$-algebras 
\[\widetilde{\T}_2(A, \alpha_1,\phi_1)\xrightarrow{(\alpha_2,\psi)}(A, \alpha_1,\phi_1)\]
sharing the 2-cell $\psi$.
\item 
    A lax $\widetilde{\T}_2$-algebra
    $\big((A,\alpha_1,\phi_1),(\alpha_2,\psi),\phi_2\big)$, where
    $\widetilde{\T}_2$ is the lifted monad on 
lax $\T_1$-alge\-bras and their lax morphisms.
\item A lax $\widetilde{\T}_1$-algebra
  $\big((A,\alpha_2,\phi_2),(\alpha_1,\psi\cdot \lambda^{-1}),\phi_1\big)$, where
  $\widetilde{\T}_1$ is the lifted monad on lax $\T_2$-alge\-bras and
  their colax morphisms.
\end{enumerate}
\end{proposition}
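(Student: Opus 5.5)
The plan is to prove the equivalences by unpacking each of (2), (3), (4) into explicit equalities of $2$-cells and matching them one by one with the four conditions \eqref{B}, \eqref{D}, \eqref{equation:lax_distr_condition_3}, \eqref{equation:lax_distr_condition_4} of Definition~\ref{definition:lax_lax_rewriting}. Throughout, the invertibility of $\lambda$ and the strict equalities \eqref{Jdu si koupit podlozky.} let us rewrite $\T_2\T_1$ as $\T_1\T_2$ freely, so that pasting diagrams on $\T_1\T_2A$ and on $\T_2\T_1A$ can be compared directly.

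\emph{Step 1: (1)$\Leftrightarrow$(2).} First write out what it means for $(\alpha_2,\psi)$ to be a lax morphism $\widetilde{\T}_2(A,\alpha_1,\phi_1)\to(A,\alpha_1,\phi_1)$ of lax $\T_1$-algebras. By \eqref{equation:lifted_T_2_lax_lax}, the source has structure map $\T_2\alpha_1\cdot\Lambda$ and constraint $\T_2\phi_1\cdot\Lambda$. Substituting into the cube \eqref{equation:lax-lax-cube} with $\T=\T_1$ and $f=\alpha_2$ gives:
\begin{itemize}
\item on the left, $\T_2\phi_1$ pasted with $\psi$;
\item on the right, $\T_1\psi$, then $\psi$, then $\phi_1$.
\end{itemize}
After inserting the $\Lambda$'s this is exactly \eqref{B}. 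The unit condition (ii), $\psi\cdot\eta_1=\id_{\alpha_2}$, becomes \eqref{equation:lax_distr_condition_3} once $\T_2\eta_1=\eta_1\T_2$ is used.

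Dually, write out the colax-morphism condition \eqref{equation:colax-lax-cube} for $(\alpha_1,\psi\cdot\lambda^{-1})\colon\widetilde{\T}_1(A,\alpha_2,\phi_2)\to(A,\alpha_2,\phi_2)$ with $\T=\T_2$. It should reproduce \eqref{D}, which involves $\T_1\phi_2$, $\T_2\psi$, $\psi$ and $\phi_2$. Its unit condition should give \eqref{equation:lax_distr_condition_4}. The only care needed is orientation: $\psi\cdot\lambda^{-1}$ read as $\alpha_1\cdot\T_1\alpha_2\Rightarrow\alpha_2\cdot\T_2\alpha_1$ in the $\T_2$-colax convention. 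Here one also checks that the two faces appear in the order claimed, i.e.\ that the $2$-cell of the colax morphism points from $f\cdot\alpha$ to $\alpha_2\cdot\T f$.

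\emph{Step 2: (2)$\Leftrightarrow$(3) and (2)$\Leftrightarrow$(4).} Use Proposition~\ref{proposition:lifted_monad}, which says $\widetilde{\T}_2$ is a $2$-monad on lax $\T_1$-algebras and lax morphisms, together with the symmetric statement for $\widetilde{\T}_1$ on lax $\T_2$-algebras and colax morphisms. A lax $\widetilde{\T}_2$-algebra on $(A,\alpha_1,\phi_1)$ then consists of:
\begin{itemize}
\item a $1$-cell, i.e.\ a lax morphism $(\alpha_2,\psi)\colon\widetilde{\T}_2(A,\alpha_1,\phi_1)\to(A,\alpha_1,\phi_1)$;
\item a $2$-cell $\phi_2$ in that $2$-category, i.e.\ a $2$-cell of lax morphisms in the sense of Definition~\ref{definition:2-cell};
\item the coherence and unit laws.
\end{itemize}
The coherence and unit laws involve only underlying $2$-cells in $\K$, so they hold exactly because $(A,\alpha_2,\phi_2)$ is a lax $\T_2$-algebra.

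What remains is to show that ``$\phi_2$ is a $2$-cell of lax $\T_1$-morphisms $(\alpha_2\cdot\widetilde{\T}_2\alpha_2,\ \text{composite})\Rightarrow(\alpha_2\cdot\mu_2,\ \text{composite})$'' is equivalent to \eqref{D}. Here the composite lax-morphism $2$-cells must be computed explicitly:
\begin{itemize}
\item for $\alpha_2\cdot\T_2\alpha_2$ it is $\psi$ pasted with $\T_2\psi$;
\item for $\alpha_2\cdot\mu_2$ it is $\psi$ pasted with the identity, since $\mu_2$ is a strict morphism by the distributive law.
\end{itemize}
The $2$-cell compatibility then reads precisely as \eqref{D}.

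Hence (3) amounts to \eqref{B}, \eqref{equation:lax_distr_condition_3} (the morphism) plus \eqref{D} (the $2$-cell condition). It remains to see that \eqref{equation:lax_distr_condition_4} is automatic, or is part of the unit condition that $\phi_2$ be compatible with $\eta_2$ at the level of morphisms. I expect it to come from the requirement that the unit $\eta_2$ yields the identity morphism, i.e.\ that $(\alpha_2,\psi)\circ\widetilde{\eta}_2=(\id,\id)$ as lax morphisms. Its $2$-cell component is $\psi\cdot\T_1\eta_2=\id_{\alpha_1}$. Statement (4) is handled symmetrically, with the roles of \eqref{B} and \eqref{D} swapped.

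\emph{Main obstacle.} The delicate step is the bookkeeping in Step 2: correctly computing the $2$-cell components of composites of lax (resp.\ colax) morphisms under the lifted monad, and the whiskering by $\Lambda^{\pm1}$. Only with these in hand does the $2$-cell condition on $\phi_2$ (resp.\ $\phi_1$) literally coincide with \eqref{D} (resp.\ \eqref{B}), rather than a variant differing by a mirror image. I would first fix the conventions for composites of lax morphisms as in the Appendix, and only then compare the pasting diagrams face by face.
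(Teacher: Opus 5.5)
Your proposal is correct and is essentially the paper's argument: (1)$\Leftrightarrow$(2) by matching \eqref{B} with the lax-morphism cube and \eqref{D} with the colax-morphism cube, and (3), (4) by reading the remaining condition as the $2$-cell axiom for $\phi_2$ (resp.\ $\phi_1$). You are also more explicit than the paper in tracing \eqref{equation:lax_distr_condition_3} and \eqref{equation:lax_distr_condition_4} to the morphism unit axiom and to the unit law $(\alpha_2,\psi)\circ\widetilde{\eta}_2=(\id,\id)$.

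One slip to fix: in the unpacked lax-morphism cube, the left side is $\phi_1$ pasted with $\psi$, and the right side is $\T_1\psi$, then $\psi$, then $\T_2\phi_1$. You have $\phi_1$ and $\T_2\phi_1$ interchanged, which does not typecheck, although your conclusion that this is exactly \eqref{B} is right.
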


\pf The proof is an exercise in 2-category theory, namely in expanding
the definitions and collecting the data. The equivalence of (1) and (2) follows from comparing equality \eqref{B} with \eqref{equation:lax-lax-cube}, and \eqref{D} with \eqref{equation:colax-lax-cube}. To see the equivalence of (1) and~(3),
note that condition \eqref{B} expresses that
\hbox{$(\alpha_2,\psi)$} is a lax morphism of lax
$\T_1$-algebras, and~\eqref{D} is exactly condition
\eqref{equation:2-cell} on a lax algebra 2-cell $\phi_2$. The equivalence of (1) and (4) is analogous.
Since we will not use this result we
omit the details. \epf

\section{Mixed lax rewriting rules}
\label{Zitra jedem na chalupu zase kvuli te strese.}

For our applications in Section~\ref{section:ker_coker}, we need to
develop a version of rewriting rules relating lax and colax algebras. Hence,
adapting the material of Section~\ref{section:lax_lax}, we introduce,
in Definition~\ref{definition:lax_rewriting}, 
 a mixed lax
rewriting rule between a colax algebra over a lax algebra
and, in Proposition~\ref{Dnes vecere u indianu.}, provide equivalent
characterizations of this structure.  For the convenience of
the reader, the definitions of (co)lax algebras and their
(co)lax morphisms are recalled in Appendix~\ref{Druhy den v Brne.}.

\begin{definition}
\label{definition:lax_rewriting}
A {\em mixed lax rewriting rule\/} between a colax $\T_1$-algebra
$(A, \alpha_1,\phi_1)$ and a lax $\T_2$-algebra $(A, \alpha_2,\phi_2)$
is a~\hbox{$2$-cell} $\psi$ filling the pentagon
\begin{equation}
\label{Jarka je uz druhy den na Sardinii.}
\xymatrix@C=2.7em@R=1.2em{
\T_1\T_2 A \ar[r]^{\T_1\alpha_2} \ar[d]_\lambda^\cong    
& \T_1 A \ar[dd]^{\alpha_1}   \ar@{=>}[ddl]|{\psi\rule{.1em}{0pt}}
\\
\T_2 \T_1 A
\ar[d]_{\T_2\alpha_1} 
\\
\T_2A   \ar[r]^{\alpha_2}    &A 
}
\end{equation}
such that the follows four equalities of $2$-cells hold: 
\begin{subequations}
\label{Jsem tak prijemne rozplacly.}
\begin{equation}\label{equation:lax_distr_condition_1_mixed}
\xymatrix@R=1.2em@C=1.5em{
\T_2^2\T_1A \ar[rr]^{\T_2^2 \alpha_1}\ar[dd]_{\T_2\T_1\alpha_2} && \T_2^2A \ar@{<=}[lldd]|{\T_2\psi\rule{.1em}{0pt}}\ar[rd]^{\mu_2(A) } \ar[dd]_{\T_2 \alpha_2}
\\
&&& \T_2A \ar[dd]^{\alpha_2} \ar@{<=}[ld]|{\phi_2\rule{.1em}{0pt}}
\\
\T_2\T_1A  \ar[rr]^{\T_2\alpha_1} \ar[rd]_{\T_1\alpha_2} && \T_2A
\ar@{<=}[ld]|{\psi\rule{.1em}{0pt}}
\ar[dr]^{\alpha_2}
\\
&\T_1A \ar[rr]^{\alpha_1} && A
}
\hskip 0.3em
\raisebox{-3em}{$=$} \hskip 0.3em
\xymatrix@R=1.2em@C=1.5em{
\T_2^2\T_1A \ar[rr]^{\T_2^2 \alpha_1}\ar[dd]^{\T_2 \T_1\alpha_2} \ar[rd]^(.6){\mu_2{\T_1A}}   && 
\T_2^2A \ar[rd]^{\mu_2(A)} 
\\
&\T_2\T_1A \ar@{<=}[ld]|{\T_1\phi_2\rule{.1em}{0pt}} \ar[rr]^{\T_2 \alpha_1}   \ar[dd]_{\T_1\alpha_2}&& \T_2A \ar[dd]^{\alpha_2} 
\ar@{<=}[lldd]|{\psi\rule{.1em}{0pt}}
\\
\T_2\T_1A \ar[rd]_{\T_1\alpha_2} &&
\\
&\T_2A \ar[rr]^{\alpha_1} && A,
}
\end{equation}
\begin{equation}
\label{equation:lax_distr_condition_2_mixed}
\xymatrix@R=1.2em@C=1.5em{
\T_1^2\T_2A \ar[rr]^{\T_1^2 \alpha_2}\ar[dd]_{\T_1\T_2\alpha_1} && \T_1^2A \ar@{=>}[lldd]|{\T_1\psi\rule{.1em}{0pt}}\ar[rd]^{\mu_1(A) } \ar[dd]_{\T_1\T_2\alpha_1}
\\
&&& \T_1A \ar[dd]^{\alpha_1} \ar@{=>}[ld]|{\phi_1\rule{.1em}{0pt}}
\\
\T_1\T_2A  \ar[rr]^{\T_1\alpha_2} \ar[rd]_{\T_2\alpha_1} && \T_1A
\ar@{=>}[ld]|{\psi\rule{.1em}{0pt}}
\ar[dr]^{\alpha_1}
\\
&\T_2A \ar[rr]^{\alpha_2} && A
}
\hskip 0.3em
\raisebox{-3em}{$=$} \hskip 0.3em
\xymatrix@R=1.2em@C=1.5em{
\T_1^2\T_2A \ar[rr]^{\T_1^2 \alpha_2}\ar[dd]^{\T_1 \T_2\alpha_1} \ar[rd]^(.6){\mu_1{\T_2A}}   && 
\T_1^2A \ar[rd]^{\mu_1A} 
\\
&\T_1\T_2A \ar@{=>}[ld]|{\T_2\phi_1\rule{.1em}{0pt}} \ar[rr]^{\T_1 \alpha_2}   \ar[dd]_{\T_2\alpha_1}&& \T_1A \ar[dd]^{\alpha_1} 
\ar@{=>}[lldd]|{\psi\rule{.3em}{0pt}}
\\
\T_1\T_2A \ar[rd]_{\T_2\alpha_1} &&
\\
&\T_2A \ar[rr]^{\alpha_2} && A,
}
\end{equation}
\begin{equation}\label{equation:lax_distr_condition_3_mixed}
\psi\cdot\T_2\eta_1(A)=\id_{\alpha_2}, \ \hbox { and}
\end{equation}
\begin{equation}\label{equation:lax_distr_condition_4_mixed}
\psi\cdot\T_1\eta_2(A)=\id_{\alpha_1}.
\end{equation}
\end{subequations}
\end{definition}

The principal difference from the lax--lax rewriting rules treated
in Section~\ref{section:lax_lax} is that,
while a~colax algebra $\alpha_1; \T_1 A \to A$ and a lax algebra  
$\alpha_2; \T_2 A \to A$ still combine to form the composite morphism
$\alpha : \T_{21}A \to A$, the resulting structure is neither lax nor
colax, since the associativity 
constraints $\phi_1$ and $\phi_2$ do not combine into
a $2$-cell analogous to $\phi_{21}$ in Theorem~\ref{Ozivil jsem
  Jarcino auto.}, which therefore has no analog in the colax--lax case.
However, it is easy to verify that
the formula 
\begin{equation}\label{equation:lifted_T_2_colax_lax}
\widetilde{\T}_2(A, \alpha_1,\phi_1)=(\T_2 A,\T_2\alpha_1\cdot\Lambda, \T_2\phi_1\cdot\Lambda)
\end{equation}
defines a colax $\T_1$-algebra, while formula
\begin{equation}\label{equation:lifted_T_1_colax_lax}
\widetilde{\T}_1(A, \alpha_2,\phi_2)=(\T_1A,\T_1\alpha_2\cdot\Lambda^{-1}, \T_1\phi_2\cdot\Lambda^{-1}),
\end{equation}
already given in Section~\ref{section:lax_lax},
defines a lax $\T_2$-algebra.

As in the lax-lax case, the monad $\T_2$ lifts to the 2-category of
colax $\T_1$-algebras and their lax morphisms by formula
\eqref{equation:lifted_T_2_colax_lax}. Similarly, the monad
$\T_1$ lifts to the 2-category of lax $\T_2$-algebras and their colax
morphisms. We have the
following colax-lax  
analog of Proposition~\ref{proposition:lax_lax_characterization}.

\begin{proposition}
\label{Dnes vecere u indianu.}
Let $(A, \alpha_1,\phi_1)$ be a colax $\T_1$-algebra and $(A,
\alpha_2,\phi_2)$ be a lax $\T_2$-algebra, and $\psi$ a 2-cell 
filling the pentagon
\begin{equation}
\xymatrix@C=2.7em@R=1.2em{
\T_1\T_2 A \ar[r]^{\T_1\alpha_2} \ar[d]_\lambda^\cong    
& \T_1 A \ar[dd]^{\alpha_1}   \ar@{=>}[ddl]|{\psi\rule{.1em}{0pt}}
\\
\T_2 \T_1 A
\ar[d]_{\T_2\alpha_1} 
\\
\T_2A   \ar[r]^{\alpha_2}    &A 
}
\end{equation}
Then the following four structures are equivalent.
\begin{enumerate}[topsep=0pt, partopsep=0pt, itemsep=0pt, itemindent=-1em]
\item 
A lax rewriting rule of
Definition~\ref{definition:lax_rewriting}, 
i.e.~the 2-cell $\psi$  which satisfies 
the equalities in~\eqref{Jsem tak prijemne rozplacly.}.
\item 
A pair of a colax morphism of lax $\T_2$-algebras
\[
\widetilde{\T}_1(A,
\alpha_2,\phi_2)\xrightarrow{(\alpha_1,\psi\cdot \lambda^{-1})}(A, \alpha_2,\phi_2)
\]
and a lax morphism of colax $\T_1$-algebras 
\[\widetilde{\T}_2(A, \alpha_1,\phi_1)\xrightarrow{(\alpha_2,\psi)}(A, \alpha_1,\phi_1)\]
sharing the 2-cell $\psi$.
  \item 
    A lax $\widetilde{\T}_2$-algebra $\big((A,\alpha_1,\phi_1),(\alpha_2,\psi),\phi_2\big)$, where $\widetilde{\T}_2$ is the lifted monad on colax $\T_1$-alge\-bras and their lax morphisms.
    \item 
    A colax $\widetilde{\T}_1$-algebra $\big((A,\alpha_2,\phi_2),(\alpha_1,\psi),\phi_1\big)$, where $\widetilde{\T}_1$ is the lifted monad on lax $\T_2$-alge\-bras and their colax morphisms.
\end{enumerate}
\end{proposition}

\begin{proof}
The proof is analogous to the proof of
Proposition~\ref{proposition:lax_lax_characterization} and we omit it. 
\end{proof}

\section{Composite arrow (co)monad}
\label{Moc ji to tam nesedlo.}

In this section, we explain our monadic approach to pointed
categories, kernels, and cokernels. Some of the material is taken
almost verbatim
from~\cite{pasticio}. 
Let $\dvedve$ denote the category with two objects $0$ and $1$, and one
non-identity morphism $a\colon 0\to 1$. The arrow-category 2-functor
\[
\Ar\colon \Cat\longrightarrow \Cat
\]
is given by $\Ar\C:=[\dvedve,\C]$, the category of functors and natural
transformations. Let us describe it in more detail.

The objects of $\Ar (\C)$ are morphisms of $\C$ and the
morphisms of $\Ar (\C)$ are commutative squares
in~$\C$. A~morphism 
\[
S=(h_0,h_1)\colon f\longrightarrow g
\] 
in $\Ar (\C)$ will be
depicted as the square
\begin{equation}
\label{equation:square}  
    \raisebox{-1.8em}{$S\ =$ \ }
\xymatrix{a \ar[r]^{h_0} \ar[d]_{f}  & c \ar[d]^{g}
\\
b  \ar[r]^{h_1} & d.
}
\end{equation}
By convention, the morphisms in $\Ar (\C)$ always go 
from the left edge of a square to the right edge.
For a functor $F\colon \C\to \D$, the value
$\Ar  (F)\colon \Ar (\C) \to \Ar (\mathtt{D})$ 
is given by post-composition with $F$, that is 
\begin{align*}
\Ar  (F)(f\colon a\to b)&=F(f)\colon F(a) \longrightarrow F(b),
\\
\Ar  (F)\big((h_0,h_1)\colon f\to g\big)&=
\big(F(h_0), F(h_1)\big)\colon F(f)\longrightarrow F(g).
\end{align*}
Finally, given a natural 
transformation $\omega\colon F\Rightarrow G$, the transformation 
$\Ar  (\omega)\colon \Ar  (F) \Rightarrow \Ar  (G)$ has components 
\begin{equation}
    \label{Na koncert jede cely sbor vlakem}
\Ar (\omega)_{f}=(\omega_a,\omega_b)\colon F(f)\longrightarrow G(f).
\end{equation}

The category $\dvedve$ has two complementary monoidal structures given by meet and join operations, that is, $(\dvedve,\wedge,1)$ and $(\dvedve,\vee,0)$. 
These monoidal categories are strictly associative and strictly symmetric. %Let $0\colon \jed \to \dvedve$ be the functor that picks the object $0$ and $1\colon \jed \to \dvedve$ picks the object $1$.

\begin{proposition}
\label{V nedeli ma byt snad 40.}
The precomposition with the structure maps of monoidal categories
$(\dvedve,\wedge,1)$ and $(\dvedve,\vee,0)$ induces two 2-comonads
$ (\Vr,\delta^\bullet,\ee^\bullet)$ and
$(\Ar,\delta_\bullet,\ee_\bullet)$ on \/$\Cat$, the category of
categories.
\end{proposition}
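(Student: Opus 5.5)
The plan is to use the general principle that a monoid in a monoidal category $(\mathcal M,\otimes,I)$ yields, by precomposition, a comonad on functor categories. Concretely, for a monoidal category $(\dvedve,\otimes,e)$ with $\otimes\in\{\wedge,\vee\}$ and $e$ the corresponding unit, the functors $\otimes\colon \dvedve\times\dvedve\to\dvedve$ and $e\colon \mathbf 1\to\dvedve$ make $\dvedve$ a monoid in the cartesian monoidal $2$-category $(\Cat,\times,\mathbf 1)$. The contravariant $2$-functor $[-,\C]$ turns this monoid into a comonoid for composition of endo-$2$-functors. I will make this explicit rather than cite it.

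\emph{Step 1: the comultiplication.} I identify $\Ar^2\C=[\dvedve,[\dvedve,\C]]\cong[\dvedve\times\dvedve,\C]$ by currying; this is an isomorphism $2$-natural in $\C$. I then define
\[
\delta_\C\colon [\dvedve,\C]\xrightarrow{\ -\circ\otimes\ }[\dvedve\times\dvedve,\C]\cong\Ar^2\C .
\]
Explicitly, for $\otimes=\vee$ the arrow $f\colon a\to b$ goes to the square with top edge $\id_a$ and the remaining three edges $f$, or the transposed variant depending on the orientation convention. For $\otimes=\wedge$ it goes to the square with $f$ on two edges and $\id_b$ on the bottom. I define the counit
\[
\ee_\C\colon[\dvedve,\C]\to[\mathbf 1,\C]=\C
\]
as evaluation at $e$. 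For $\Ar$ with $\vee$ this is the domain functor; for $\Vr$ with $\wedge$ it is the codomain functor. Since precomposition is strictly $2$-functorial in $\C$ (postcomposition with $F$ and whiskering with $\omega$ commute with precomposition), both $\delta$ and $\ee$ are strict $2$-natural transformations.

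\emph{Step 2: the comonad axioms.} Coassociativity $\Ar\delta\cdot\delta=\delta\Ar\cdot\delta$ reduces, after currying to $[\dvedve^3,\C]$, to precomposition with the two maps $\dvedve^3\to\dvedve$ given by $(x\otimes y)\otimes z$ and $x\otimes(y\otimes z)$. These are equal because $\wedge$ and $\vee$ are strictly associative. The counit laws reduce in the same way to $e\otimes x=x=x\otimes e$, which holds strictly. Strict $2$-naturality was already established in Step 1.

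\emph{Main obstacle.} The only delicate point is bookkeeping. I have to check that the currying isomorphisms, together with the two ways of whiskering ($\Ar\delta$ versus $\delta\Ar$), correspond to precomposition with $\otimes\times\id$ and $\id\times\otimes$ respectively, in the correct variable order. This in turn requires matching the convention that morphisms of $\Ar\C$ run horizontally, from the left edge of a square to the right edge. I would verify it directly on objects, which are cubes in $\Ar^3\C$, and note that the check on morphisms and $2$-cells is identical, since everything is given by precomposition. This verification is also where the symmetric roles of $\Vr$ and $\Ar$ get pinned down.
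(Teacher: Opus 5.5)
Your proposal is correct and is essentially the paper's proof: precomposition with $\wedge$ and $\vee$ through the currying isomorphism $[\dvedve\times\dvedve,\C]\cong\Ar^2\C$, with coassociativity and counitality inherited from the strict associativity and unitality of $\wedge$ and $\vee$. Your explicit squares contain a slip, which does not affect the argument. You swapped the two squares and miscounted their edges: $f\cdot\wedge$ has $\id_a$ on the top and left and $f$ on the other two edges, while $f\cdot\vee$ has $f$ on the top and left and $\id_b$ on the other two. A square with top edge $\id_a$ cannot have its other three edges equal to $f$.
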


\begin{proof}
As functors, $\Ar = \Vr$, but the comonad structures are different. 
For a category~$\C$ and a~morphism 
$f\colon a\to b$ in $\C$, the comonad structures are given by the formulas
    \begin{align*}
    \label{equation:4-square}
    \raisebox{-1.8em}{$\delta^\C(f):=$}&\raisebox{-1.8em}{$f\cdot \wedge=$}
\xymatrix{a\ar[r]^{\id} \ar[d]_{\id}  &a  \ar[d]^{f} 
\\
a\ar[r]^{f} & \, b,
}&
 \raisebox{-1.8em}{$\delta_\C(f):=$}&\raisebox{-1.8em}{$f\cdot \vee=$}
\xymatrix{a\ar[r]^{f} \ar[d]_{f}  &b  \ar[d]^{\id} 
\\
b\ar[r]^{\id} & \, b,
}\\
 \ee^\C(f):=& f(1)= b,&
\ee_\C(f):=&f(0)=a, &
\end{align*}
that use the canonical isomorphism $[\dvedve\x\dvedve\to \C]\cong
\big[\dvedve\to[\dvedve\to \C]\big]=\Ar^2 \C $.
% and $[\jed\to\C]\cong \C$.
Their coassociativity and counitality is implied by the
associativity and unitality of the operations $\wedge$ and $\vee$. 
\end{proof}

A dualization of Definition~\ref{Pujde dnes k ochotnikum?}
gives distributive laws for comonads and rewriting rules for
coalgebras. We can again define the composite comonad for which the
dual version of Proposition~\ref{proposition:known_characterization}
holds. For the arrow-category 2-comonads in Proposition~\ref{V nedeli
  ma byt snad 40.}, we have the following

\begin{proposition}
\label{proposition:composite_comonad}
The precomposition with the canonical symmetry
$\tau\colon\dvedve\x\dvedve\cong \dvedve\x\dvedve$ induces
an invertible, idempotent distributive law
$\lambda \colon \Vr\Ar \Rightarrow \Ar\Vr$. This gives a
composite comonad $\Ar\Vr$ for which an $\Ar\Vr$-coalgebra $(\C,\beta)$
is a triple $(\C,\beta^\C,\beta_\C)$ consisting of a $\Vr$-coalgebra
$\beta^\C\colon \C\to \Vr\C$ and an $\Ar$-coalgebra
$\beta_\C\colon \C\to \Ar\C$, such that the diagram
\begin{equation}
\label{Zitra ma Dominik zkousku.}
\xymatrix{
  &\C  \ar@/_1.3em/[ld]_{\beta^\C}\ar@/^1.3em/[rd]^{\beta_\C}
\\
\Vr\C \ar[d]_{\Vr\beta_C}  && \Ar\C \ar[d]^{\Ar\beta^C}
\\
\Vr\Ar\C  \ar[rr]^\lambda  &&\Ar\Vr\C,
}
\end{equation}
which represents the dual of~\eqref{Jaruska ma uzeh.}, commutes.
\end{proposition}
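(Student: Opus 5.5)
The whole statement comes from functoriality of $[-,\C]$ in the first variable, applied to the symmetry $\tau$. I would therefore work at the level of the exponent $\dvedve\x\dvedve$ rather than with squares in $\C$.

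First, the comonad structures on $\Vr$ and $\Ar$ are obtained, as in the proof of Proposition~\ref{V nedeli ma byt snad 40.}, by precomposition with $\wedge$, $\vee\colon \dvedve\x\dvedve\to\dvedve$ and with the unit inclusions $1,0\colon \mathbb 1\to\dvedve$. Using the canonical isomorphism $\Ar^2\C\cong[\dvedve\x\dvedve,\C]$, I define
\[
\lambda_\C := [\tau,\C]\colon [\dvedve\x\dvedve,\C]\to[\dvedve\x\dvedve,\C].
\]
Concretely, $\lambda_\C$ transposes a commutative square along its main diagonal, as in the Introduction. This is $2$-natural in $\C$, because precomposition commutes with postcomposition by functors and whiskering by natural transformations. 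Since $\tau^2=\id$, we get $\lambda^2=\id$. So $\lambda$ is invertible and idempotent in the sense of being an involution, with $\lambda^{-1}=\lambda$.

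Second, I check the comonad-dual of the four equations~(\ref{Jarka se vratila z Kolumbie.}). Each one reduces to an identity of functors between powers of $\dvedve$, because $[-,\C]$ turns such identities into identities of $2$-natural transformations.

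\begin{itemize}
\item The compatibility of $\lambda$ with $\delta^\bullet$ becomes
\[
(\wedge\x\id)\cdot\sigma=\tau\text{-shuffle}\cdot(\id\x\wedge)
\]
as maps $\dvedve^3\to\dvedve^2$, where $\sigma$ is the permutation $(x,y,z)\mapsto(y,z,x)$ or its analogue. This holds because $\wedge$ acts on the coordinates being merged and the symmetry only moves the untouched coordinate.
\item The same argument works for $\vee$.
\item The counit conditions reduce to $\tau\cdot(1\x\id)=(\id\x1)$ and its analogues, up to the identifications $\mathbb 1\x\dvedve\cong\dvedve$.
\end{itemize}

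The main obstacle I expect is bookkeeping. I have to identify $\Vr\Ar\C$ and $\Ar\Vr\C$, and likewise the triple composites, with $[\dvedve^n,\C]$ so that the coordinate order matches which comonad acts on which variable. I would fix once and for all that the outer functor corresponds to the first coordinate. I would then write each structure map as precomposition with an explicit map of posets $\dvedve^n\to\dvedve^m$, and verify the identities pointwise on the finite posets. That reduces everything to trivial equalities of Boolean expressions.

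Third, with $\lambda$ an invertible distributive law, the dual of the composite construction from Section~\ref{Jarka se vrati ze Sardinie az ve stredu.} gives the composite comonad $\Ar\Vr$. The comultiplication is $\Ar\lambda\Vr$ composed with $\delta\delta$, and the counit is $\ee_\bullet\ee^\bullet$. The dual of Proposition~\ref{proposition:known_characterization}, specifically (1)$\Leftrightarrow$(5) dualized, then identifies $\Ar\Vr$-coalgebras with pairs consisting of a $\Vr$-coalgebra $\beta^\C$ and an $\Ar$-coalgebra $\beta_\C$ satisfying the dual rewriting rule. The dual of~\eqref{Jaruska ma uzeh.} reads
\[
\lambda\cdot\Vr\beta_\C\cdot\beta^\C=\Ar\beta^\C\cdot\beta_\C,
\]
which is exactly the commutativity of~\eqref{Zitra ma Dominik zkousku.}. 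Finally, I would note that everything is strict and $2$-natural, so the $2$-categorical version follows verbatim from the $1$-categorical one.
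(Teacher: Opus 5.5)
Your proposal is correct and matches the paper's proof. In both, $\lambda$ is precomposition with $\tau$, and the distributive-law axioms reduce to the same four commuting diagrams of maps between powers of $\dvedve$: the counit triangles with $0,1$ and the $\wedge$, $\vee$ shuffles, where your $\sigma$ is the paper's $(\id\x\tau)\cdot(\tau\x\id)$. The coalgebra description then comes from the dual of Proposition~\ref{proposition:known_characterization}. One typing point: the middle map $\Ar\Ar\Vr\Vr\to\Ar\Vr\Ar\Vr$ in the composite comultiplication is really $\Ar\lambda^{-1}\Vr$, and it agrees with your $\Ar\lambda\Vr$ only because $\tau$ is an involution.
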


\pf The axioms of a distributive law for $\lambda$ are implied by
the commutativity of diagrams
\begin{equation*}
\xymatrix@R=2.5em@C=2.5em{
\dvedve \ar[d]_{1\x \id} \ar[dr]^{\id\x 1}&
\\
\dvedve\x\dvedve \ar[r]^{\tau}& \dvedve\x\dvedve
}
\qquad
\xymatrix@R=2.5em@C=2.5em{
\dvedve\x \dvedve\x \dvedve\ar[r]^{\tau \x \id} \ar[d]_{\id \x \wedge}&
\dvedve\x \dvedve\x \dvedve \ar[r]^{\id\x\tau}&
\dvedve\x \dvedve\x \dvedve \ar[d]^{\wedge \x\id}
\\
\dvedve\x\dvedve \ar[rr]^{\tau}&& \dvedve\x\dvedve 
}
\]
\[
\xymatrix@R=2.5em@C=2.5em{
\dvedve \ar[d]_{0\x \id} \ar[dr]^{\id\x 0}&
\\
\dvedve\x\dvedve \ar[r]^{\tau}& \dvedve\x\dvedve
}
\qquad
\xymatrix@R=2.5em@C=2.5em{
\dvedve\x \dvedve\x \dvedve\ar[r]^{\id\x\tau} \ar[d]_{\vee\x\id}&
\dvedve\x \dvedve\x \dvedve \ar[r]^{\tau\x\id}&
\dvedve\x \dvedve\x \dvedve \ar[d]^{\id\x \vee}
\\
\dvedve\x\dvedve \ar[rr]^{\tau}&& \dvedve\x\dvedve. 
}
\end{equation*}
The rest follows from the dual of 
Proposition~\ref{proposition:known_characterization}.\epf

\begin{proposition}
\label{proposition:pointed}
Pointed categories are $\Ar\Vr$-coalgebras with the structure operations
 the unique morphisms 
$\beta^\C(a)=!^a\colon a\to 0$ and $\beta_\C(a)=!_a\colon 0\to a$.
\end{proposition}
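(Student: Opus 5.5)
The plan is to check directly, from the explicit formulas in the proof of Proposition~\ref{V nedeli ma byt snad 40.}, that the two functors given by the unique maps to and from the zero object are coalgebras for the two comonads. I will then check that they satisfy the compatibility square~\eqref{Zitra ma Dominik zkousku.}. By Proposition~\ref{proposition:composite_comonad} this exhibits a pointed category as an $\Ar\Vr$-coalgebra. Throughout, $\C$ is a category with a zero object $0$. The one fact used repeatedly is that any diagram in $\C$ all of whose arrows have source or target $0$ is determined by its objects. Hence any two such diagrams with the same vertices coincide.

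\emph{Step 1: functoriality.} I define the two structure functors on objects by $a\mapsto (!^a\colon a\to 0)$ and $a\mapsto(!_a\colon 0\to a)$. On a morphism $h\colon a\to b$ they send $h$ to the squares $(h,\id_0)\colon !^a\to !^b$ and $(\id_0,h)\colon !_a\to !_b$. These squares commute because all composites into or out of $0$ agree. Preservation of identities and composition is then immediate.

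\emph{Step 2: counit and coassociativity.} First I match each functor to the comonad whose counit it satisfies. The counit $\ee^\C$ picks the codomain $f(1)$ and $\ee_\C$ picks the domain $f(0)$. The law $\ee\circ\beta=\id_\C$ therefore forces the functor used with $\ee^\C$ to be $a\mapsto(0\to a)$ and the one used with $\ee_\C$ to be $a\mapsto(a\to 0)$. I will read the notation $\beta^\C,\beta_\C$ of the statement with this matching, which is the only one compatible with the counits. The counit laws then hold on objects and morphisms by inspection.

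For coassociativity of the $\Vr$-coalgebra I compare $\delta^\C(!_a)$ with $\Vr(\beta)(!_a)$, both objects of $\Vr^2\C$. The first is the square with vertices $0,0,0,a$, with identities on $0$ and $!_a$ elsewhere. The second is the morphism $\beta(0)\to\beta(a)$ in $\Vr\C$, that is the square $(\id_0,!_a)\colon \id_0\to !_a$. This square has the same four vertices, so the two squares coincide by the zero-object fact; the same holds on morphisms. The $\Ar$-case is dual, using $\delta_\C$ and the vertices $a,0,0,0$.

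\emph{Step 3: the rewriting square~\eqref{Zitra ma Dominik zkousku.}.} Both composites $\C\to\Ar\Vr\C$ send $a$ to a square whose vertices are $0$, $0$, $0$, $a$ (respectively $a$, $0$, $0$, $0$). They therefore agree after applying the transposition $\lambda$ induced by $\tau$. Again, on morphisms $h$ the value is determined by $h$ together with identities of $0$.

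The main obstacle is not mathematical but bookkeeping. One has to keep straight which edge of a square is its source and target in $\Ar\C$, how $\lambda$ transposes, and which counit belongs to which comonad. That bookkeeping is exactly what fixes the matching in Step~2. Once the orientations are fixed, every verification reduces to the uniqueness of maps into and out of $0$.
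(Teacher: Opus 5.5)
Your proposal is correct and is the same direct verification as the paper's proof, which only asserts that the zero object gives both coalgebra structures and that the pentagon~\eqref{Zitra ma Dominik zkousku.} commutes. Your Step~2 observation is also right: the counits force $\beta^\C(a)\colon 0\to a$ and $\beta_\C(a)\colon a\to 0$, so the arrows in the statement are swapped, and the paper itself uses your corrected reading elsewhere (e.g.\ in $\nu_1$ and $\nu_2$, and in calling $\beta_\C$ the terminal map).

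One bookkeeping slip in Step~3: the squares $\Vr\beta_\C(\beta^\C(a))$ and $\Ar\beta^\C(\beta_\C(a))$ have $a$ at the two \emph{off-diagonal} corners, which is exactly why the transposition $\lambda$ identifies them. They are not the corner patterns $0,0,0,a$ and $a,0,0,0$ carried over from Step~2, which $\lambda$ would fix.
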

\begin{proof}
It is straightforward to check that the zero object gives both
$\Vr$- and $\Ar$-coalgebra structure, and that the
pentagon~(\ref{Zitra ma Dominik zkousku.}
commutes. 
\end{proof}

%Recall from Proposition~\ref{proposition:pointed} that a pointed category $\C$ is viewed as an $\Vr\Ar$-coalgebra $(\C,\beta_\C,\beta^\C)$ with $\beta_\C(a)=!_a$ and $\beta^\C(a)=!^a$. 
In the second part of this section we study induced monads on the category of $\Ar\Vr$-coalgebras.
Recall from \cite[Section~2]{pasticio} that the cofree-forgetful adjunction of the comonad $\Ar$ induces a 2-monad $\oAr$ on the category of $\Ar$-coalgebras. The value $\oAr(\C,\beta_\C) = (\Ar\C,\delta_\C)$ is the cofree coalgebra on $\C$. The multiplication comes from the counit $\mu_{(\C,\beta_\C)} = \Ar\epsilon_\C$ and the component of the unit is the $\Ar$-coalgebra map
$\eta_{(\C,\beta_\C)} = \beta_\C \colon (\C,\beta_\C) \to (\Ar\C,\delta_\C).$ 

\begin{theorem}[Theorem 8 of \cite{pasticio}]
\label{theorem:pasticio_lax_are_kernels}
For a pointed category $\C$, viewed as an $\Ar$-coalgebra
$(\C,\beta_\C)$ with $\beta_\C(a)=!_a$, the terminal map, a
(normalized) lax $\oAr$-algebra structure $(\C,\eK,\phi_2)$ is
equivalent to a choice of a kernel
\[
\kappa_f\colon \eK (f)\to a
\]
for each morphism $f\colon a \to b$ of \ $\C$, such that
$\kappa_{!_a}=\id_a$ and $\kappa_{\id_a}=\ !^a$.
\end{theorem}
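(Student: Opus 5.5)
The plan is to unpack a normalized lax $\oAr$-algebra on $(\C,\beta_\C)$ into its data and translate each axiom into a statement about kernels. By the description of $\oAr$ above, the structure map is an $\Ar$-coalgebra morphism $\eK\colon(\Ar\C,\delta_\C)\to(\C,\beta_\C)$. It comes with an associativity $2$-cell $\phi_2\colon \eK\cdot\oAr\eK\Rightarrow \eK\cdot\mu$, where $\mu=\Ar\ee_\C$. The unit law $\eK\cdot\beta_\C=\id_\C$ holds strictly.

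First I would show that compatibility with the coalgebra structures makes $\eK$ carry a canonical map to the domain. The condition $\beta_\C\cdot\eK=\Ar\eK\cdot\delta_\C$, evaluated on $f\colon a\to b$, applies $\eK$ to the square $\delta_\C(f)$, which runs from $\id_a$ to $f$ with top edge $f$ and bottom edge $\id_b$. This gives a morphism $\eK(\id_a)\to\eK(f)$. The unit axiom $\eK(!_a)=\ldots$ is best read via $\eta=\beta_\C$, giving $\eK(!_a)=a$.

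The counit $\mu=\Ar\ee_\C$ sends a square to its left edge. So the components of $\phi_2$ at a square $S\colon f\to g$ give maps $\eK(\eK(S))\to\eK(f)$. Specialising $S$ to the square from $!_a$ to $f$ (top $\id_a$), the map $\eK(S)$ is a morphism $a=\eK(!_a)\to\eK(f)$... I would instead aim to produce $\kappa_f\colon\eK(f)\to a$ as $\eK$ applied to the square from $f$ to $!^a$-type data, or as a component of $\phi_2$ at the square from $f$ to $\id_a$-shaped arrows. I would fix the precise square by matching the normalization condition $\kappa_{!_a}=\id_a$.

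Second, I need $f\cdot\kappa_f=0$. This follows from functoriality of $\eK$ on the square from $f$ to $!_b$: that square forces the map into $\eK(!_b)$, which is the zero-target case. Third, for the universal property, given $x\colon c\to a$ with $fx=0$, the pair $(x,0)$ is a square from $!_c$ to $f$. Applying $\eK$ gives $c=\eK(!_c)\to\eK(f)$, the factorization. Uniqueness and the identity $\kappa_f\cdot\eK(x,0)=x$ come from the coherence and unit axioms of $\phi_2$, using naturality of $\phi_2$ and $\phi_2\cdot\eta\T=\id$. Conversely, from chosen kernels I would define $\eK(f)$ on objects and on squares by the universal property. $\phi_2$ is then the canonical comparison $\ker(\ker\text{-square})\to\ker(f)$, and the coherences hold by uniqueness of factorizations through kernels. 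Finally, I would check that the two constructions are mutually inverse, using $\kappa_{\id_a}=!^a$ to fix $\eK(\id_a)=0$.

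The main obstacle is the first direction: identifying exactly which components of $\phi_2$ and the coalgebra-morphism condition produce $\kappa_f$, and showing that the lax coherence forces uniqueness of factorizations. I would attack it by evaluating everything on the small diagrams in $\Ar^2\C$ built from $f$, identities and zero maps.
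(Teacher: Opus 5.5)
Your outline has the right overall shape: you factor $x$ through $\eK(f)$ by applying $\eK$ to $(x,!^b)\colon !_c\to f$, and you build the converse from universal properties. But there is a genuine gap exactly where you flag it, and three slips feed into it.

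First, $\delta_\C(f)=f\cdot\vee$ is the square $(f,\id_b)\colon f\to\id_b$, not a square out of $\id_a$. So the condition $\beta_\C\cdot\eK=\Ar\eK\cdot\delta_\C$ says that $\eK(\id_b)=0$ and that $\eK(f,\id_b)\colon\eK(f)\to 0$ is the terminal map.

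Second, $\mu=\Ar\ee_\C$ sends a square $S=(h_0,h_1)\colon f\to g$ to its top edge $h_0$, not to its left edge. Hence $\phi_2(S)\colon\eK(\eK(S))\to\eK(h_0)$, and in your converse the comparison map must land in $\ker(h_0)$ (it is induced by $\kappa_f\cdot\kappa_{\eK(S)}$), not in $\ker f$.

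Third, $\kappa_f$ is never pinned down, and the square you first try, from $!_a$ to $f$ with top edge $\id_a$, exists only when $f=0$. The right definition is $\kappa_f:=\eK(\nu_2(f))$, where $\nu_2(f)=(\id_a,!_b)\colon f\to !_a$ is the unit of the adjunction $\ee_\C\dashv\beta_\C$ (whose counit is the identity). With this definition:
\begin{itemize}
\item $\kappa_{!_a}=\id_a$ and $\kappa_f\cdot\eK(x,!^b)=\eK(x,\id_0)=x$ follow from functoriality and the strict unit law $\eK\cdot\beta_\C=\id$ alone;
\item $\kappa_{\id_a}=!^a$ is exactly the statement $\eK(\id_a)=0$;
\item $f\kappa_f=\eK(f,!_b)=0$, because $(f,!_b)\colon f\to !_b$ factors through $\id_b$ and $\eK(\id_b)=0$.
\end{itemize}

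The missing idea is what makes the factorization unique, i.e.\ what makes $\kappa_f$ monic. Attributing it to ``the coherence and unit axioms'' is not yet an argument, and the associativity cube plays no role in this direction. What you need is that $\phi_2$ is forced. Apply naturality of $\phi_2$ with respect to $\Ar\nu_2\colon\id\Rightarrow\Ar\beta_\C\cdot\Ar\ee_\C$. Combine it with $\Ar\ee_\C\cdot\Ar\nu_2=\id$ and the normalization $\phi_2\cdot\Ar\beta_\C=\id$ (note $\oAr\eta=\Ar\beta_\C$). This gives $\phi_2=\eK\cdot\Ar(\eK\cdot\nu_2)=\eK\cdot\Ar\kappa$, which is the formula \eqref{equation:phi_via_nu} that the paper imports from the cited work.

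Now use the other normalization, $\phi_2\cdot\delta_\C=\id$; recall that the unit of $\oAr$ at the cofree coalgebra $(\Ar\C,\delta_\C)$ is $\delta_\C$. It reads $\eK(\kappa_f,!^b)=\id_{\eK(f)}$ for the square $(\kappa_f,!^b)\colon !_{\eK(f)}\to f$. This is the triangle identity exhibiting $\eK$ as a right adjoint of $\beta_\C$ with identity unit. Since $(\kappa_f y,!^b)=(\kappa_f,!^b)\cdot\beta_\C(y)$, it yields $y=\eK(\kappa_f y,!^b)$ for every $y\colon c\to\eK(f)$, which is the uniqueness.

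The forced formula also shows that $\phi_2$ carries no data beyond $\eK$. You need this for your two constructions to be mutually inverse. In the converse, define $\phi_2:=\eK\cdot\Ar\kappa$ and check the associativity cube by composing with the monic kernel maps.
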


Dually, we get the induced monad $\uVr$ on
$\Vr$-coalgebras with $\uVr(\C,\beta^\C) = (\Vr\C,\delta^\C)$ together
with the
following dual of~\cite[Theorem~8]{pasticio}.

\begin{theorem}
For a pointed category \ $\C$, viewed as an $\Vr$-coalgebra
$(\C,\beta^\C)$ with $\beta^\C(a)=!^a$, the initial map, a colax
$\uVr$-algebra structure $(\C,\eC,\phi_1)$ is equivalent to a choice
of a cokernel
\[
\pi_f\colon b\to \eC (f)
\]
for each morphism $f: a \to b$ of \ $\C$, such that $\pi_{!^a}=\id_a$ and
$\pi_{\id_a}=\ !_a$. 
\end{theorem}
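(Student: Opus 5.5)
The statement is the formal dual of Theorem~\ref{theorem:pasticio_lax_are_kernels} (Theorem~8 of~\cite{pasticio}). I plan to obtain it by applying that theorem to the opposite category rather than redoing the analysis. The key observation is that $(-)^{\rm op}$ interchanges the two monoidal structures on $\dvedve$. Under the isomorphism $\dvedve^{\rm op}\cong\dvedve$ swapping $0$ and $1$, meet becomes join. Hence $\Ar(\C^{\rm op})\cong(\Vr\C)^{\rm op}$, and this isomorphism carries $(\delta_\bullet,\ee_\bullet)$ on $\C^{\rm op}$ to $(\delta^\bullet,\ee^\bullet)^{\rm op}$ on $\C$. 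A $\Vr$-coalgebra $(\C,\beta^\C)$ with $\beta^\C(a)=!^a$ therefore corresponds to the $\Ar$-coalgebra $(\C^{\rm op},\beta_{\C^{\rm op}})$ with $\beta_{\C^{\rm op}}(a)=!_a$ computed in $\C^{\rm op}$, where the zero object stays a zero object. Under the same correspondence, the induced monad $\uVr$ on $\Vr$-coalgebras matches $\oAr$ on $\Ar$-coalgebras: the cofree coalgebra, the multiplication $\Vr\ee^\C$ and the unit $\beta^\C$ go to their counterparts.

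The step that needs care is the variance. The operation $(-)^{\rm op}$ is a $2$-functor $\Cat\to\Cat^{\rm co}$: it preserves $1$-cells and reverses $2$-cells. So a normalized colax $\uVr$-algebra $(\C,\eC,\phi_1)$, with $\phi_1\colon\eC\cdot\mu\Rightarrow\eC\cdot\Vr\eC$, becomes a structure on $\C^{\rm op}$ whose constraint points the other way, which is exactly a normalized lax $\oAr$-algebra $(\C^{\rm op},\eC^{\rm op},\phi_1^{\rm op})$. I will check this against the coherence axioms, namely the cube~\eqref{Kdy bude zas letadelko v poradku?} and the two unit laws. Reversing all $2$-cells turns the lax axioms into the colax ones with the arrows of the $2$-cells flipped, and the $1$-cells are unchanged, so the verification is mechanical. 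The normalization conditions are preserved because they are strict equalities.

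Next I apply Theorem~\ref{theorem:pasticio_lax_are_kernels} to $\C^{\rm op}$. It says that such a lax structure is equivalent to choosing, for each $f^{\rm op}\colon b\to a$ in $\C^{\rm op}$ (that is, each $f\colon a\to b$ in $\C$), a kernel $\kappa\colon \eC(f)\to b$ in $\C^{\rm op}$, normalized by $\kappa_{!_b}=\id$ and $\kappa_{\id}=\,!$. A kernel in $\C^{\rm op}$ is a cokernel in $\C$, so we get $\pi_f:=\kappa^{\rm op}\colon b\to\eC(f)$. The normalization translates as follows: initial and terminal maps swap, giving $\pi_{!^a}=\id$ and $\pi_{\id_a}=\,!_a$, which matches the statement.

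The main obstacle is the bookkeeping in the identification $\Ar(\C^{\rm op})\cong(\Vr\C)^{\rm op}$. It involves the reversal of arrow direction in $\dvedve$ and the convention that morphisms in $\Ar\C$ run from left edge to right edge, and I will check it on the explicit squares $\delta^\C(f)$ and $\delta_\C(f)$ from Proposition~\ref{V nedeli ma byt snad 40.}. The correspondence must be checked to be natural, so that it also transports the unit $\beta^\C$ and the multiplication $\Vr\ee^\C$ correctly. Once that is done, the rest is a direct transcription of~\cite[Theorem~8]{pasticio}.
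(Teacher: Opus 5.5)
Your proposal is correct and takes the same route as the paper. The paper gives no separate argument: it records the statement as the dual of \cite[Theorem~8]{pasticio}. Your transport along $(-)^{\rm op}\colon\Cat\to\Cat^{\rm co}$, using $\dvedve^{\rm op}\cong\dvedve$ to exchange $\wedge$ and $\vee$, supplies the bookkeeping that makes this duality precise: it turns colax $\uVr$-algebras on $(\C,\beta^\C)$ into lax $\oAr$-algebras on $(\C^{\rm op},\beta_{\C^{\rm op}})$, and the normalizations $\pi_{!^a}=\id_a$ and $\pi_{\id_a}=\,!_a$ translate correctly.
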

 
To arrive at the setting of mixed rewriting rules in
Definition~\ref{definition:lax_rewriting}, we need to extend both
monads $\uVr$ and $\oAr$ to monads $\T_1$ and $\T_2$ that
operate on the same category, namely the
category of $\Ar\Vr$-coalgebras. They will be induced 
by the comonads $\Vr$ and $\Ar$, and by the invertible
distributive law $\lambda\colon \Vr\Ar\rightarrow\Ar\Vr$. For better
orientation, we present the following
scheme
\tikzcdset{arrows={line width=.5pt}}
\[
\begin{tikzcd}
	& {\Ar\Vr\tt{-coalg}} & \\
	{\Vr\tt{-coalg}} && {\Ar\tt{-coalg}} \\
	& \tt{Cat}
	\arrow["{\T_1, \T_2}"', from=1-2, to=1-2, loop ,distance=3em, in=130, out=50]
    \arrow["{\square_\Ar}",from=1-2, to=2-1, in=90,out=180]
	\arrow["{\square_\Vr}"',  in=90, out=0,from=1-2, to=2-3]
	\arrow["\uVr", from=2-1, to=2-1, loop left,looseness=5]
	\arrow["{\square_\Vr}",  in=180, out=270,from=2-1, to=3-2]
	\arrow["\oAr", from=2-3, to=2-3, loop right]
	\arrow["{\square_\Ar}"', in=0,out=270,from=2-3, to=3-2]
\end{tikzcd}
\]
where $\square$'s denote forgetful functors.
\begin{proposition}
    The endofunctor $\T_2$ on $\Ar\Vr$-coalgebras defined by
    $$\T_2(\C,\beta^\C,\beta_\C):=(\Ar\C,\lambda^{-1}\cdot \Ar\beta^\C,\delta_\C)$$
    together with natural transformations 
    $${\mu_2}{(\C,\beta^\C,\beta_\C)}:=\Ar\epsilon_\C\colon \Ar^2 \C \to \Ar\C$$
$${\eta_2}{(\C,\beta^\C,\beta_\C)}:=\beta_\C\colon \C\to \Ar\C$$
is a 2-monad on the category of \/$\Ar\Vr$-coalgebras.
\end{proposition}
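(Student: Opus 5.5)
We will check that $\T_2$ is well defined on objects and on morphisms, and that $\mu_2$ and $\eta_2$ are morphisms of $\Ar\Vr$-coalgebras. The 2-monad axioms will then be inherited from the already known 2-monad $\oAr$ on $\Ar$-coalgebras, because the forgetful 2-functor $\square_\Vr$ from $\Ar\Vr$-coalgebras to $\Ar$-coalgebras is faithful on 1- and 2-cells. Conceptually, $\T_2$ is the monad induced by the cofree--forgetful adjunction for the comonad $\Ar$ lifted along $\lambda$ to $\Vr$-coalgebras. The dual of Proposition~\ref{proposition:known_characterization} identifies $\Ar\Vr$-coalgebras with coalgebras of this lifted comonad $\widetilde\Ar$ on $\Vr$-coalgebras, whose value on $(\C,\beta^\C)$ is $(\Ar\C,\lambda^{-1}\cdot\Ar\beta^\C)$, the comonadic analogue of~\eqref{equation:lifted_T_2}. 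The formula for $\T_2$ is then the cofree $\widetilde\Ar$-coalgebra on the $\Vr$-coalgebra $(\C,\beta^\C)$. Since every comonad induces a monad on its coalgebras via $GU$, the monad axioms follow formally; the content of the proof is to verify the identifications explicitly.

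First, $\lambda^{-1}\cdot\Ar\beta^\C\colon \Ar\C\to\Vr\Ar\C$ is a $\Vr$-coalgebra structure on $\Ar\C$. This is the comonadic version of the lifting statement~\eqref{equation:lifted_T_2}. Coassociativity and counitality follow from the distributive-law identities~\eqref{Jdu si koupit podlozky.} for $\lambda^{-1}$, which is again a distributive law, together with the coalgebra axioms of $\beta^\C$. Next, the pair $(\lambda^{-1}\cdot\Ar\beta^\C,\delta_\C)$ must satisfy the rewriting rule~\eqref{Zitra ma Dominik zkousku.}. Substituting the definitions, this reduces to the distributive-law compatibility of $\lambda$ with $\delta$, namely $\Ar\Vr$-naturality together with the square involving $\vee$ and $\tau$ in the proof of Proposition~\ref{proposition:composite_comonad}. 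Concretely, both paths send $f$ to the same $3$-cube assembled from $\beta^\C$ and identities. For a morphism $F$ of $\Ar\Vr$-coalgebras, $\Ar F$ commutes with $\delta$ by naturality and with $\lambda^{-1}\cdot\Ar\beta$ by naturality of $\lambda$, so $\T_2$ is a 2-functor; on 2-cells it acts by $\Ar(\omega)$.

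Next, $\mu_2=\Ar\epsilon_\C$ (here $\epsilon=\ee_\bullet$) must be a morphism $(\Ar^2\C,\ldots)\to(\Ar\C,\ldots)$. Compatibility with the $\Ar$-structures $\delta$ is exactly the statement that it is the $\oAr$ multiplication, which is known from~\cite{pasticio}. Compatibility with the $\Vr$-structures requires $\Vr\Ar\ee_\C\cdot\lambda^{-1}\cdot\Ar(\lambda^{-1}\cdot\Ar\delta^\C)=\lambda^{-1}\cdot\Ar\delta^\C\cdot\Ar\ee_\C$. This follows from the counit law of the distributive law, the triangle diagram with $0\times\id$ and $\tau$, together with naturality of $\lambda$. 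For $\eta_2=\beta_\C$, compatibility with $\delta$ is coassociativity of $\beta_\C$, and compatibility with the $\Vr$-structures is precisely the rewriting rule~\eqref{Zitra ma Dominik zkousku.} for $(\C,\beta^\C,\beta_\C)$. This step is the reason $\T_2$ can only be defined on $\Ar\Vr$-coalgebras and not merely on $\Ar$-coalgebras. Two-naturality of $\mu_2$ and $\eta_2$, and the monad laws, are identities of underlying functors and transformations. They therefore hold because they hold for $\oAr$, or directly because $\ee_\bullet$ is the counit of a comonad.

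The main obstacle is the $\Vr$-compatibility of $\mu_2$: keeping track of which copy of $\dvedve$ each functor acts on inside $\Ar^2\Vr\C=[\dvedve^3,\C]$. I would handle it by working entirely in the precomposition picture. Every map involved is precomposition with a functor between powers of $\dvedve$ built from $\tau$, $\wedge$, and the inclusions $0,1$. The required equality then reduces to the commutativity of a diagram of such functors between powers of $\dvedve$, which can be checked on objects, since $\dvedve^3$ is a poset.
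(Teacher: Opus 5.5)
Your proposal is correct and follows the paper's route. You get the $\Vr$-coalgebra axioms for $\lambda^{-1}\cdot\Ar\beta^\C$ and the rewriting rule for $(\lambda^{-1}\cdot\Ar\beta^\C,\delta_\C)$ from naturality plus the comonadic distributive-law identities, and the monad laws from naturality of $\ee_\bullet$; you also spell out the checks that $\mu_2$ and $\eta_2$ are $\Ar\Vr$-coalgebra maps, which the paper leaves as straightforward (including the nice observation that for $\eta_2$ this is exactly the rewriting rule). One slip: in your $\Vr$-compatibility equation for $\mu_2$ the map $\delta^\C$ should read $\beta^\C$, since as written it does not typecheck. With that correction the argument via naturality of $\lambda^{-1}$, the counit law $\Vr\ee_\C\cdot\lambda^{-1}=\ee_{\Vr\C}$, and naturality of $\ee_\bullet$ goes through.
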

\pf First note that $\lambda^{-1}\cdot \Ar\beta^\C$ defines an
$\Vr$-coalgebra, which means the  commutativity of the
diagram
\begin{equation*}
\xymatrix@R=2.5em@C=2.5em{
\Ar\C \ar[d]_{\Ar\beta^\C} \ar[r]^{\Ar\beta^\C}&
\Ar\Vr\C \ar[r]^{\lambda^{-1}} \ar[d]^{\Ar\Vr\beta^\C}
& \Vr\Ar\C \ar[d]^{\Vr\Ar\beta^\C}
\\
\Ar\Vr\C \ar[d]_{\lambda^{-1}} \ar[r]^{\Ar\delta^\C}& \Ar\Vr^2 \C \ar[r]^{\lambda^{-1}\Vr}& \Vr\Ar\Vr\C\ar[d]^{\Vr\lambda^{-1}}
\\
\Vr\Ar\C \ar[rr]^{\delta^{\Ar\C}}&& \Vr^2\Ar\C.
}
\end{equation*}
In that diagram, the upper left square commutes since $\beta^\C$ is an
$\Vr$-coalgebra, the upper right square commutes by naturality, and
the bottom square is the comonad version of~\eqref{Jarka se vratila z
  Kolumbie.}, so the entire diagram commutes as
required.
The diagram
\[
\xymatrix{
&\Ar\C \ar@/_1.3em/[ld]_{\delta_\C} \ar@/^1.3em/[rd]^{\Ar\beta^\C}
\\
\Ar^2 \C  \ar[d]_{\Ar^2 \beta^\C} && \Vr\Ar\C \ar[d]^{\lambda^{-1}}
\ar@{-->}[lld]_{\delta_{\Vr\C}}
\\
\Ar^2\Vr\C \ar[d]_{\Ar \lambda^{-1}}  && \Vr\Ar\C \ar[d]^{\Vr \delta_\C}
\\
\Ar\Vr\Ar\C\ar[rr]^{\lambda^{-1}\Ar} && \Vr\Ar^2 \C 
}
\]
is, as indicated, composed of two diagrams that commute by naturality and by the comonad version of~\eqref{Jarka se vratila z
  Kolumbie.}. Hence it 
also commutes, showing that the pair of coalgebras 
$\lambda^{-1}\cdot \Ar\beta^\C$ and
$\delta_\C$  obeys the rewriting rule and that the object
$\T_2(\C,\beta_\C,\beta^\C)$ is indeed a coalgebra for the composite
comonad $\Ar\Vr$.  The associativity of $\mu_2$ follows from the
naturality of $\epsilon_\bullet$, and it is clear that
\hbox{$\mu_2\cdot\T_2\eta_2=\id$} and $\mu_2\cdot\eta_2\T_2=\id$.  It is
straightforward to check that both ${\mu_2}_{(\C,\beta^\C,\beta_\C)}$
and ${\eta_2}_{(\C,\beta^\C,\beta_\C)}$ are morphisms of
$\Ar\Vr$-coalgebras.  \epf 

The next proposition relates the algebras
of the induced monads $\oAr$ and $\T_2$.

\begin{proposition}   
\label{krasny zapad slunce z 19 patra}
A lax $\T_2$-algebra structure
$\big((\C,\beta^\C,\beta_\C),\alpha_2,\phi_2\big)$ is equivalently a
lax $\oAr$-algebra structure
$\big((\C,\beta_\C),\alpha_2,\phi_2\big)$, for which $\eK$ is also a
$\Vr$-coalgebra map
$$\alpha_2\colon (\Ar\C,\lambda^{-1}\cdot
\Ar\beta^\C)\to(\C,\beta^\C).$$ For a pointed category $\C$, viewed as
an
$\Vr$-coalgebra% with a lax $\oAr$-algebra structure $\big((\C,\beta_\C),\alpha_2,\phi_2\big)$
, any $\alpha_2$ is an \/$\Vr$-coalgebra map.
\end{proposition}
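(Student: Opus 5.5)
The plan is to unpack what a lax $\T_2$-algebra on the $2$-category of $\Ar\Vr$-coalgebras is, and then compare it with a lax $\oAr$-algebra on $\Ar$-coalgebras. The $2$-cells of $\Ar\Vr$-coalgebras are simply natural transformations. The forgetful $2$-functor from $\Ar\Vr$-coalgebras to $\Ar$-coalgebras is $2$-fully faithful on $2$-cells and faithful on $1$-cells. Moreover, $\T_2$, $\mu_2$ and $\eta_2$ are sent to $\oAr$, $\mu$ and $\eta$: both use $\Ar\C$ with $\delta_\C$, the multiplication $\Ar\epsilon_\C$ and the unit $\beta_\C$.

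Hence a lax $\T_2$-algebra $(\alpha_2,\phi_2)$ consists of three pieces of data. The first is a $1$-cell $\alpha_2\colon \T_2(\C,\beta^\C,\beta_\C)\to(\C,\beta^\C,\beta_\C)$ of $\Ar\Vr$-coalgebras. The second is a $2$-cell $\phi_2$, which is just a natural transformation between composites of such maps. The third is the coherence and unit equalities, which are equalities of natural transformations.

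The key observation is that a morphism of $\Ar\Vr$-coalgebras is exactly a functor that is simultaneously an $\Ar$-coalgebra map and a $\Vr$-coalgebra map. This follows from the dual of Proposition~\ref{proposition:known_characterization} applied to the composite comonad of Proposition~\ref{proposition:composite_comonad}, in its version for morphisms. For $\T_2(\C,\ldots)$ the $\Ar$-component is $\delta_\C$ and the $\Vr$-component is $\lambda^{-1}\cdot\Ar\beta^\C$. So requiring $\alpha_2$ to be an $\Ar\Vr$-coalgebra map is the same as requiring it to be an $\oAr$-structure map $(\Ar\C,\delta_\C)\to(\C,\beta_\C)$ that is also a $\Vr$-coalgebra map $(\Ar\C,\lambda^{-1}\cdot\Ar\beta^\C)\to(\C,\beta^\C)$.

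The remaining parts match directly. The composites $\alpha_2\cdot\T_2\alpha_2$ and $\alpha_2\cdot\mu_2$ have the same underlying functors as in the $\oAr$ case. The coherence equalities of Definition~\ref{definition:lax_lax_rewriting}, that is, \eqref{Kdy bude zas letadelko v poradku?} and the unit laws, are therefore literally the same equalities. This proves the first claim in both directions.

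For the pointed case, I check directly that $\beta^\C\cdot\alpha_2 = \Vr\alpha_2\cdot\lambda^{-1}\cdot\Ar\beta^\C$ as functors $\Ar\C\to\Vr\C$.
\begin{itemize}
\item On an object $f\colon a\to b$, the left side is $!^{\alpha_2(f)}\colon\alpha_2(f)\to 0$.
\item On the right side, $\Ar\beta^\C(f)$ is the square with rows $!^a,!^b$ and columns $f$ and $\id_0$. Transposing it by $\lambda^{-1}$ gives the arrow $(a\to 0)\to(b\to 0)$ in $\Ar\C$, namely the square with top $f$ and bottom $\id_0$.
\item Applying $\alpha_2$ to this morphism of $\Ar\C$ gives $\alpha_2(f)\to\alpha_2(!^b\colon b\to 0)$.
\end{itemize}
It therefore suffices to know that $\alpha_2(!^b)=0$. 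By Theorem~\ref{theorem:pasticio_lax_are_kernels}, $\kappa_{!^b}$ should be a kernel of $b\to0$, which is $\id_b$, so $\alpha_2(!^b)$ would be $b$. That is wrong, so I must have the orientation confused.

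I expect the main obstacle to be these orientation and convention issues: the direction of $\lambda$, which of $\wedge$ and $\vee$ is used, and which end of a transposed square is taken. Redoing the computation with the correct convention, the transposed object should be the arrow $0\to 0$, i.e.\ $\id_0=!_0$, whose kernel is $0$. The image is then a morphism $\alpha_2(f)\to0$, which is unique because $0$ is terminal.

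Equality on morphisms follows in the same way: any two parallel arrows into $0$ coincide, so the $\Vr$-coalgebra square commutes automatically. I would also confirm that $\alpha_2$ is compatible with the counit of $\Vr$, which is again automatic since the target is $0$.
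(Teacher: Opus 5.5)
Your route is the paper's. The first part rests on the observation that a morphism of $\Ar\Vr$-coalgebras is exactly a functor that is both an $\Ar$- and a $\Vr$-coalgebra morphism, together with the fact that $\T_2,\mu_2,\eta_2$ lie over $\oAr,\mu,\eta$. The second part is a direct check of $\beta^\C\cdot\alpha_2=\Vr\alpha_2\cdot\lambda^{-1}\cdot\Ar\beta^\C$ on objects and morphisms. The first part is fine. The pointed computation, however, is carried out with $\beta^\C$ pointing the wrong way. Your ``redo'' states the answer you expect rather than computing it. Its conclusion, ``a morphism $\alpha_2(f)\to 0$, unique because $0$ is terminal'', is not what the computation gives.

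The counit $\epsilon^\C(f)=f(1)$ of $\Vr$ picks the codomain, so a $\Vr$-coalgebra must send $a$ to an arrow with codomain $a$. Hence $\beta^\C(a)=!^a$ is the initial arrow $0\to a$. The two arrows in Proposition~\ref{proposition:pointed} are printed swapped; Theorem~\ref{theorem:pasticio_lax_are_kernels}, its dual, and the formulas for $\nu_1,\nu_2$ all use $!^a\colon 0\to a$ and $!_a\colon a\to 0$. With $a\mapsto(a\to 0)$ you do not even have a $\Vr$-coalgebra.

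Done correctly, the computation runs as follows.
\begin{itemize}
\item $\Ar\beta^\C(f)$ is the morphism $(\id_0,f)\colon !^a\to !^b$ of $\Vr\C$.
\item Its transpose $\lambda^{-1}\Ar\beta^\C(f)$ is the morphism $(!^a,!^b)\colon \id_0\to f$ of $\Ar\C$.
\item $\Vr\alpha_2$ sends this to the arrow $\alpha_2(\id_0)\to\alpha_2(f)$.
\item $\alpha_2(\id_0)=0$, most directly because $\id_0=!_0=\beta_\C(0)$ and $\alpha_2\cdot\beta_\C=\id$ by the unit law, so no appeal to kernels is needed.
\end{itemize}
This is therefore the unique arrow $0\to\alpha_2(f)$, namely $\beta^\C(\alpha_2(f))$. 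The property of $0$ being used is initiality, not terminality.

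On a morphism $S=(h_0,h_1)\colon f\to g$, both sides are $(\id_0,\alpha_2(S))\colon !^{\alpha_2(f)}\to !^{\alpha_2(g)}$. The first component is an endomorphism of $0$. The second component is $\alpha_2(S)$ on both sides, and your remark about parallel arrows into $0$ does not address it.

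Finally, the check against the counit of $\Vr$ is unnecessary: a coalgebra morphism is only required to commute with the coactions. With these corrections your argument coincides with the paper's.
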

\pf The first part follows from the fact, that a map of $\Ar\Vr$-coalgebras is a map which is both an $\Ar$-coalgebra homomorphism and a $\Vr$-coalgebra homomorphism. For the second part we show commutativity of diagram
\begin{equation}
\xymatrix@C=2.7em@R=1.2em{
\Ar \C \ar[r]^{\alpha_2} \ar[d]_{\Ar\beta^\C}   
&  \C \ar[dd]^{\beta^\C}   
\\
\Ar\Vr\C
\ar[d]_{\lambda^{-1}} 
\\
\Vr\Ar\C   \ar[r]^{\Vr\alpha_2}    &\Vr\C.
}
\end{equation}
For an object $f\in \Ar\C$, we compute 
\[
\beta^\C(\alpha_2(f))=\ !^{\alpha_2(f)}
\]
and
\[\Vr\alpha_2\cdot\lambda^{-1}\cdot\Ar\beta^\C(f)=\Vr\alpha_2\cdot\lambda^{-1}\left(
\raisebox{2em}{\xymatrix{
	0 \ar[d]^{!^a}\ar@{=}[r]& 0 \ar[d]_{!^b}
\\
	a \ar[r]^{f}& b
}}
\right)=\Vr\alpha_2\left(
\raisebox{2em}{\xymatrix{
	0 \ar[r]^{!^a}\ar@{=}[d]& a \ar[d]_{f}
\\
	0 \ar[r]^{!^b}& b
}}
\right)=\left(0\xrightarrow{!^{\alpha_2(f)}}\alpha_2(f)\right).\]
For a morphism $S\in \Ar\C$, the computation is similar.\epf
\begin{corollary}
\label{corollary:T_2_kernels}
For a pointed category \,$\C$, viewed as an $\Ar\Vr$-coalgebra
$(\C,\beta^\C,\beta_\C)$ with $\beta^\C(a)=!^a$ and $\beta_\C(a)=!_a$,
a lax $\T_2$-algebra structure $(\C,\eK,\phi_2)$ is equivalent to a
choice of a kernel
\[
\kappa_f\colon \eK (f)\to a
\]
for each morphism $f\colon a \to b$ of \ $\C$, such that
$\kappa_{!_a}=\id_a$ and $\kappa_{\id_a}=\ !^a$.
\end{corollary}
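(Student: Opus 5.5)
The corollary follows by composing the two results stated just before it.

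First, I would invoke Proposition~\ref{krasny zapad slunce z 19 patra}. It says that a lax $\T_2$-algebra structure $\big((\C,\beta^\C,\beta_\C),\eK,\phi_2\big)$ is the same thing as a lax $\oAr$-algebra structure $\big((\C,\beta_\C),\eK,\phi_2\big)$ whose underlying functor $\eK$ is additionally a $\Vr$-coalgebra map
\[
(\Ar\C,\lambda^{-1}\cdot\Ar\beta^\C)\to(\C,\beta^\C).
\]
The reason is that morphisms of $\Ar\Vr$-coalgebras are exactly the functors that are simultaneously $\Ar$- and $\Vr$-coalgebra homomorphisms. The lax structure itself lives over the forgetful passage from $\Ar\Vr$-coalgebras to $\Ar$-coalgebras: $\mu_2$ and $\eta_2$ are $\Ar\epsilon_\C$ and $\beta_\C$, the same as for $\oAr$. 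It follows that the associativity $2$-cell $\phi_2$ and its coherence and normalization equations are literally the same data in both settings.

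Next, I would use the second part of Proposition~\ref{krasny zapad slunce z 19 patra}. When $\C$ is pointed and $\beta^\C(a)=!^a$, the $\Vr$-coalgebra map condition holds automatically for any $\eK$. On objects, both composites send $f$ to $!^{\eK(f)}$, as computed there. On morphisms, both composites send a square $S$ to the square from $!^{\eK(f)}$ to $!^{\eK(g)}$ with components $\id_0$ and $\eK(S)$; this uses only that $0$ is terminal, so there is nothing to choose. Thus the extra condition is vacuous, and lax $\T_2$-algebra structures on $(\C,\beta^\C,\beta_\C)$ coincide with normalized lax $\oAr$-algebra structures on $(\C,\beta_\C)$.

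Finally, I would apply Theorem~\ref{theorem:pasticio_lax_are_kernels}. It identifies normalized lax $\oAr$-algebra structures on a pointed $(\C,\beta_\C)$, with $\beta_\C(a)=!_a$, with choices of kernels $\kappa_f\colon\eK(f)\to a$ satisfying $\kappa_{!_a}=\id_a$ and $\kappa_{\id_a}=\,!^a$. Composing the two bijections gives the statement.

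The only point needing any care is the morphism-level check that $\eK$ is a $\Vr$-coalgebra map, which the paper dismissed as ``similar''. I expect it to be immediate from the terminality of $0$, so I do not anticipate a real obstacle.
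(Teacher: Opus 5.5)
Your proposal is correct and matches the paper's proof. The paper's proof is exactly the combination of Proposition~\ref{krasny zapad slunce z 19 patra} with Theorem~\ref{theorem:pasticio_lax_are_kernels}: the proposition identifies lax $\T_2$-algebras with lax $\oAr$-algebras whose $\eK$ is a $\Vr$-coalgebra map, which is automatic in the pointed case. Your morphism-level check, giving components $\id_0$ and $\eK(S)$ on both sides, correctly fills in the step the paper only calls ``similar''.
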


\pf
A combination of Proposition~\ref{krasny zapad slunce z 19 patra} and Theorem~\ref{theorem:pasticio_lax_are_kernels}. \epf 
Dually, there are induced monads $\uVr$ on $\Vr$-coalgebras and $\T_1$ on $\Ar\Vr$-coalgebras, defined by
$$\T_1(\C,\beta^\C,\beta_\C):=(\Vr\C, \delta^\C,\lambda\cdot \Vr\beta_\C,)$$
    together with natural transformations 
    $${\mu_1}_{(\C,\beta^\C,\beta_\C)}:=\Vr\epsilon^\C\colon \Vr^2 \C \to \Vr\C,$$
$${\eta_1}_{(\C,\beta^\C,\beta_\C)}:=\beta^\C\colon \C\to \Vr\C.$$

\begin{corollary}
\label{corollary:T_1_cokernels}
For a pointed category \,$\C$, viewed as an $\Ar\Vr$-coalgebra
$(\C,\beta^\C,\beta_\C)$ with $\beta^\C(a)=!^a$ and $\beta_\C(a)=!_a$,
a colax $\T_1$-algebra structure $(\C,\eC,\phi_1)$ is equivalent to a
choice of a cokernel 
\[
\pi_f\colon b\rightarrow \eC (f)
\] 
for each morphism $f\colon a \to b$ of\/ $\C$, such that $\pi_{!^a}=\id_a$ and
$\pi_{\id_a}=\ !_a$.
\end{corollary}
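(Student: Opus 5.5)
The plan is to dualize the argument behind Corollary~\ref{corollary:T_2_kernels}, that is, to prove first the colax counterpart of Proposition~\ref{krasny zapad slunce z 19 patra} and then combine it with the dual of Theorem~\ref{theorem:pasticio_lax_are_kernels} stated after it.

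\textbf{Step 1: reduce to $\uVr$.} The category $\Ar\Vr$-coalgebras has as morphisms the maps that are simultaneously $\Vr$- and $\Ar$-coalgebra homomorphisms, and $\T_1$ is defined from $\uVr$ by equipping $\Vr\C$ with the additional $\Ar$-coalgebra structure $\lambda\cdot\Vr\beta_\C$. The multiplication $\mu_1=\Vr\epsilon^\C$ and unit $\eta_1=\beta^\C$ of $\T_1$ are those of $\uVr$. Hence a colax $\T_1$-algebra $\big((\C,\beta^\C,\beta_\C),\eC,\phi_1\big)$ is the same as a colax $\uVr$-algebra $\big((\C,\beta^\C),\eC,\phi_1\big)$ such that $\eC$ is also an $\Ar$-coalgebra map
\[
\eC\colon (\Vr\C,\lambda\cdot\Vr\beta_\C)\to(\C,\beta_\C).
\]
The $2$-cell $\phi_1$ imposes no extra condition: $2$-cells between coalgebra maps in $\Cat$ are just natural transformations, so the coherence axioms are literally the same in both settings.

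\textbf{Step 2: the $\Ar$-coalgebra condition is automatic in the pointed case.} For an object $f\colon a\to b$ of $\Vr\C$, we have $\beta_\C(\eC(f))=\ !_{\eC(f)}\colon 0\to\eC(f)$. On the other side, $\Vr\beta_\C(f)$ is the square with top edge $0=0$, vertical edges $!_a$ and $!_b$, and bottom edge $f$. Applying $\lambda$ transposes it into the square with horizontal edges $!_a$, $!_b$ and vertical edges $\id_0$ and $f$. Applying $\Ar\eC$ then yields the arrow $\eC(\id_0)\to\eC(f)$. By normalization of the cokernel choice ($\pi_{\id_0}=\ !_0$), $\eC(\id_0)=0$, so this arrow is the unique map $0\to\eC(f)$, namely $!_{\eC(f)}$. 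On morphisms of $\Vr\C$ both sides are again determined by initiality of $0$, so the two composites agree.

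The one place where care is needed is the precise orientation conventions for $\lambda$ versus $\lambda^{-1}$ and for $\Vr$ versus $\Ar$, which are mirror images of those in the proof of Proposition~\ref{krasny zapad slunce z 19 patra}. Whichever transposition is correct, the resulting arrow has the form $\eC(\text{something with source }0)\to\eC(f)$. That source is either $\eC(\id_0)$ or $\eC(!^0)=\eC(\id_0)$, which is $0$ in both cases, so zero-object uniqueness settles the equality either way.

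\textbf{Step 3: conclude.} With the extra condition automatic, colax $\T_1$-algebra structures on $(\C,\beta^\C,\beta_\C)$ coincide with colax $\uVr$-algebra structures on $(\C,\beta^\C)$. By the theorem dual to Theorem~\ref{theorem:pasticio_lax_are_kernels}, these are exactly normalized choices of cokernels $\pi_f\colon b\to\eC(f)$ with $\pi_{!^a}=\id_a$ and $\pi_{\id_a}=\ !_a$, which proves the corollary.
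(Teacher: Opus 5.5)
Your proposal is correct and is the argument the paper intends: the corollary is stated as the dual of Corollary~\ref{corollary:T_2_kernels}, that is, the dual of Proposition~\ref{krasny zapad slunce z 19 patra} (the extra $\Ar$-coalgebra condition on $\eC$ is automatic for pointed $\C$) combined with the dual of Theorem~\ref{theorem:pasticio_lax_are_kernels}. One small slip in Step~2 does not affect the conclusion: since $\ee_\C$ takes sources, $\beta_\C(a)=\,!_a$ is the arrow $a\to 0$ (the arrows displayed in Proposition~\ref{proposition:pointed} are reversed relative to the rest of the paper), so both composites send $f$ to an arrow $\eC(f)\to 0$, the right-hand one being $\eC$ applied to the square $(!_a,!_b)\colon f\to\id_0$ with $\eC(\id_0)=\eC(!^0)=0$ by the unit law, and it is therefore terminality rather than initiality of $0$ that closes the argument.
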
 

\section{Lax rewriting rule of kernels and cokernels}
\label{section:ker_coker}

In this section we describe the interaction between a colax $\T_1$- and a
lax $\T_2$-algebra structure on a pointed category, where the $\T_i$'s
are the cokernel and kernel monads from the previous section. Assuming
the axiom of choice, every category with kernels and cokernels can be
equipped with a fixed choice of kernel $\kappa_f$ and cokernel $\pi_f$
for each morphism $f$ in the category, such that the normalization
conditions
\[
\kappa_{!_a}=\id_a,\quad \kappa_{\id_a}=!^a,\quad
\pi_{!^a}=\id_a,\quad
\hbox {and} \quad  \pi_{\id_a}=!_a
\] 
hold. We shall always 
assume that kernels and cokernels have been
chosen and normalized, without further mention.                                  
The main result of this section is

\begin{proposition}
\label{proposition:pre-abelian}
A pointed category $\C$ has (chosen, normalized) cokernels and kernels
if and only if it is equipped with a colax
$\T_1$-algebra structure $(\C,\eC,\phi_1)$ and a lax $\T_2$-algebra
structure $(\C,\alpha_2,\phi_2)$. If so, these structures satisfy a
unique lax rewriting rule $\psi$ whose components are morphisms
\[
\psi_S\colon \coker(\eK(S))\xlongrightarrow{\exists!}
\ker(\eC(\lambda(S)))
\]
induced by the universal properties of 
cokernels and kernels, for a square $S \in \Ar^2\C$ representing a
morphism in $\Ar\C$ as in~\eqref{equation:square}.
\end{proposition}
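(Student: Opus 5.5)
The first sentence of the statement needs no new work: it follows from Corollary~\ref{corollary:T_1_cokernels} and Corollary~\ref{corollary:T_2_kernels}, together with the remark that every pointed $\C$ is an $\Ar\Vr$-coalgebra (Proposition~\ref{proposition:pointed}). So the substance is the second claim, which has three parts: construct $\psi$, verify the four axioms \eqref{Jsem tak prijemne rozplacly.}, and show uniqueness.

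\emph{Construction.} Unwinding the pentagon \eqref{Jarka je uz druhy den na Sardinii.} at an object of $\T_1\T_2\C=\Vr\Ar\C$, i.e.\ at a square $S=(h_0,h_1)\colon f\to g$ as in \eqref{equation:square}, the two legs are:
\begin{itemize}
\item the object $\eC(\eK(S))=\coker\bigl(\ker f\to\ker g\bigr)$;
\item the object $\eK(\eC(\lambda S))=\ker\bigl(\coker h_0\to\coker h_1\bigr)$.
\end{itemize}
Write $\kappa_g\colon\ker g\to c$ and $\pi_{h_0}\colon c\to\coker h_0$. The composite $\pi_{h_0}\kappa_g$ satisfies two properties:
\begin{itemize}
\item it vanishes on the image of $\ker f$, since $\kappa_g\eK(S)=h_0\kappa_f$ and $\pi_{h_0}h_0=0$;
\item it is annihilated by the induced map $\coker h_0\to\coker h_1$, since that composite equals $\pi_{h_1}g\kappa_g=0$.
\end{itemize}
Hence the universal properties of the cokernel and of the kernel give a unique $\psi_S$, characterized by
\[
\kappa\cdot\psi_S\cdot\pi=\pi_{h_0}\kappa_g .
\]
I will fix the direction of the 2-cell to match \eqref{Jarka je uz druhy den na Sardinii.}, possibly after composing with $\lambda$. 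Naturality in morphisms of $\Vr\Ar\C$ (cubes) follows from the same uniqueness.

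\emph{Axioms.} The key observation is that every component of $\phi_1$, $\phi_2$ and $\psi$ is characterized by a universal property. Following \cite{pasticio}:
\begin{itemize}
\item $\phi_2$ is the comparison map $\ker(\ker\text{-square})\to\ker(\text{composite})$, determined by composition with the kernel inclusions;
\item dually, $\phi_1$ is determined by precomposition with the cokernel projections.
\end{itemize}
For \eqref{equation:lax_distr_condition_1_mixed}, both sides are morphisms between the same pair of objects: out of an iterated cokernel of kernels, into a kernel. To prove equality I precompose with the relevant cokernel projections and postcompose with the final kernel inclusion $\kappa$. Each side then reduces, by the defining equations of $\psi$, $\T_2\psi$ and $\phi_2$, to the same composite of the form $\pi\cdot(\text{kernel inclusions})$ built from the original data. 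Cokernel projections are epimorphisms and kernel inclusions are monomorphisms, so equality after this whiskering yields equality of the 2-cells. Condition \eqref{equation:lax_distr_condition_2_mixed} is the dual argument.

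The unit conditions \eqref{equation:lax_distr_condition_3_mixed} and \eqref{equation:lax_distr_condition_4_mixed} follow from normalization. For example, $\T_2\eta_1$ sends $f$ to a square in which one horizontal side is of the form $!$, so that one of the relevant kernels or cokernels is the identity by $\pi_{!^a}=\id$ or $\kappa_{!_a}=\id$. Then $\psi_S$ is forced to be the identity by its defining equation.

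\emph{Uniqueness.} Any lax rewriting rule $\psi'$ with components between the same objects must agree with $\psi$. I would get this by applying axiom \eqref{equation:lax_distr_condition_1_mixed} or \eqref{equation:lax_distr_condition_2_mixed} with the unit conditions inserted, i.e.\ evaluating at degenerate objects $\T_2\eta_1$ and $\mu$ applied to suitable squares. This should show that $\kappa\psi'_S\pi=\pi_{h_0}\kappa_g$, which pins $\psi'_S$ down by the same universal properties.

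\emph{Main obstacle.} I expect the hardest step to be the bookkeeping in \eqref{equation:lax_distr_condition_1_mixed}: identifying concretely the objects of $\T_2^2\T_1\C$, namely cubes, and the effect of $\lambda$ and $\mu_2=\Ar\epsilon$ on them. The uniqueness argument is also delicate, since it requires choosing the right degenerate cube. My first attempt would be to take the cube built from $\delta_\C$ applied to $S$.
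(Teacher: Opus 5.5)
\textbf{Verdict: the existence part is fine, but the uniqueness argument has a genuine gap.}

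Your construction of $\psi_S$ and the first sentence of the statement agree with the paper. Checking \eqref{equation:lax_distr_condition_1_mixed} and \eqref{equation:lax_distr_condition_2_mixed} componentwise, by cancelling a cokernel projection and a kernel inclusion, is a workable but heavier version of what the paper does 2-categorically in Lemma~\ref{lemma:lax_distr_concrete}, using $\phi_2=\eK\cdot\T_2\kappa$ and interchange. To push the final $\kappa$ past the nested $\psi$'s you will in effect need the triangle forms $\kappa_{\eC(\lambda S)}\circ\psi_S=\eC\bigl((\Vr\kappa)_S\bigr)$ and $\psi_S\circ\pi_{\eK(S)}=\eK\bigl((\Ar\pi)_{\lambda S}\bigr)$. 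Both follow from your defining equation by cancellation.

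\textbf{Why your uniqueness plan cannot work.} Evaluating \eqref{equation:lax_distr_condition_1_mixed} or \eqref{equation:lax_distr_condition_2_mixed} at degenerate objects gives no information: along any unit insertion, the two sides collapse to the same cell by the unit laws alone. For example, take $X=\eta_2(Y)$ with the outer unit of $\T_2$, suppressing $\lambda$.
\begin{itemize}
\item The left side $(\phi_2\cdot\T_2^2\eC)\Box(\eK\cdot\T_2\psi)\Box(\psi\cdot\T_2\T_1\eK)$ becomes $\psi_Y$. This uses $\phi_2\cdot\eta_2\T_2=\id$, $\eK\cdot\eta_2=\id$ with 2-naturality of $\eta_2$, and $\psi\cdot\T_1\eta_2=\id$.
\item The right side $(\psi\cdot\mu_2\T_1)\Box(\eC\cdot\T_1\phi_2)$ also becomes $\psi_Y$, since $\mu_2\cdot\eta_2\T_2=\id$ and $\T_1(\phi_2\cdot\eta_2\T_2)=\id$.
\end{itemize}
The same happens along $\T_2\eta_2\T_1$ and $\T_2^2\eta_1$, and dually for \eqref{equation:lax_distr_condition_2_mixed}.

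Your proposed test cube, $\delta$ applied to $S$, is exactly of this kind. The unit of $\T_2$ at the free object $\T_2\T_1\C=\Ar\Vr\C$ is its cofree coalgebra structure $\delta_{\Vr\C}$. So the equation $\kappa\psi'_S\pi=\pi_{h_0}\kappa_g$ cannot be extracted this way.

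\textbf{The missing ingredient} is naturality of $\psi'$ along a non-identity morphism into a degenerate object.
\begin{itemize}
\item Take $(\Vr\nu_2)_S\colon S\to \Vr\beta_\C\Vr\epsilon_\C(S)$ in $\Vr\Ar\C$, where $\nu_2$ is the unit of $\epsilon_\C\dashv\beta_\C$ from \eqref{Pul pate a uz je tma.}.
\item At the target, $\psi'$ is the identity by \eqref{equation:lax_distr_condition_4_mixed}, so naturality gives
\[
\eK\bigl(\Ar\eC\,\lambda\,(\Vr\nu_2)_S\bigr)\circ\psi'_S=\eC\bigl(\Vr\eK\,(\Vr\nu_2)_S\bigr)=\eC\bigl((\Vr\kappa)_S\bigr).
\]
\item The arrow-specific identity $\T_2\eC\cdot\T_1\nu_2=\nu_2\cdot\T_2\eC$ (with $\lambda$ suppressed; check it directly on squares) turns the left factor into $\eK(\nu_2)=\kappa_{\eC(\lambda S)}$.
\item Hence $\kappa_{\eC(\lambda S)}\circ\psi'_S=\eC\bigl((\Vr\kappa)_S\bigr)$. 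Since $\kappa$ is monic, $\psi'_S$ is determined.
\end{itemize}
The multiplicative axioms play no role in uniqueness at all.
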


Before the actual proof, we need to recall from \cite[Proposition~6]{pasticio}
that the associativity constraint 
$\phi_2$ of a lax $\T_2$-algebra $(\C,\eK,\phi_2)$ is
uni\-quely determined by the unit
$\nu_2\colon \uu_{\Ar (\C)}\Rightarrow \beta_\C \cdot \ee_\C$ of the
adjunction $(\id,\nu_2)\colon \epsilon_C \dashv
\beta_\C$ by the formula
\begin{equation}\label{equation:phi_via_nu}
\phi_2=\alpha_2\cdot \Ar(\alpha_2\cdot\nu_2).
\end{equation}
The components of $\nu_2$ are
\begin{equation}
\label{Pul pate a uz je tma.}
\nu_2{(f\colon a\to b)}=(\uu_a,!_b)\colon f\longrightarrow \ !_a =\left(
\raisebox{2em}{\xymatrix{
	a \ar[d]^{f}\ar@{=}[r]& a \ar[d]_{!_a}
\\
	b \ar[r]^{!_b}& 0
}}
\right).
\end{equation}
We denote by $\kappa$ the natural transformation 
\begin{equation}\label{equation:iota}
\kappa:=(\alpha_2\cdot\nu_2)\colon
  \alpha_2\Longrightarrow \ee_\C,
  \end{equation}
  whose components $\kappa_f$ realize the kernel maps as
\[
\left(\ker f \to a \right) = \left(\alpha_2(f)\xrightarrow{\kappa_f}
  a\right),
\] 
cf.~equation (20) of \cite{pasticio}.
  
Dually, there is a natural transformation $\nu_1\colon \beta^\C\epsilon^\C \Rightarrow \id_{\Vr\C}$ with components
  \begin{equation}
\label{Pul pate ... dual}
\nu_1{(f\colon a\to b)}=(!^a,\uu_b)\colon !^b\longrightarrow \ f=\left(
\raisebox{2em}{\xymatrix{
	0 \ar[d]^{!^b}\ar[r]^{!^a}& a \ar[d]_{f}
\\
	b \ar@{=}[r]& b
}}
\right).
\end{equation}
For a normalized colax $\Vr$-algebra $(\C,\eC,\phi_1)$, we denote by
$\pi$ the natural transformation
  $$\pi:=(\eC\cdot\nu_1)\colon
  \ee^\C\Longrightarrow \eC,$$
whose components $\pi_f$ realize the cokernel maps as
\[ \left(b\to \coker f \right) = \left(b\xrightarrow{\pi_f} \alpha_1(f)\right).\]
Note that every kernel is a monomorphism, so are the components of
$\kappa$. Likewise, the components of $\pi$ are epimorphisms.

For a square $S\in \Ar\Vr\C$ we draw the following diagram containing the component of the 2-cell~$\psi$ as the top right diagonal map.

\begin{equation}
\label{diagram:abelian_for_S}
\xymatrix@C=3em{
\ker f \ar[r]^{\alpha_2(S)} \ar@{>->}[dd]_{\kappa_f}
& \ker g \ar@{>->}[dd]_{\kappa_g} 
\ar[r]^(.38){\pi_{\alpha_2(S)}} 
\ar@/_1em/[rrd]|(.4){\ \exists !\ }|(.61)\hole   
& \coker(\alpha_2(S))  \ar@/_1em/[ddr] |{\ \exists ! \ }
 \ar[dr]^{\exists ! \psi_S, \exists ! \psi^S} 
\\
&&& \ker ( \alpha_1(\lambda(S))) \ar@{>->}[d]^{\kappa_{\alpha_1(\lambda(S))}}
\\
a \ar[r]^{h_0}\ar[dd]_f & \ar[dd]_g c \ar@{->>}[rr]^{\pi_{h_0}} && 
\coker (h_0) \ar[dd]^{\alpha_1(\lambda(S))}
\\
\ar@{}[r]|S &
\\
b \ar[r]^{h_1}& d  \ar@{->>}[rr]^{\pi_{h_1}} &&  \coker (h_1)
}
\end{equation}
There are, in principle, two morphisms
\[
\psi_S,\psi^S\colon \coker(\eK(S))\longrightarrow
\ker(\eC(\lambda(S))),
\]
where $\psi_S$ is induced by the universal
property of $\ker\big(\eC(\lambda(S))\big)$, and $\psi^S$ is induced
by the universal property of $\coker(\eK(S))$. However, the
top right pentagon of diagram~\eqref{diagram:abelian_for_S}
commutes for both of them. 
Therefore $\psi_S=\psi^S$, since $\pi_{\eK(S)}$ is an epimorphism and
$\kappa_{\eC(\lambda(S))}$ is a monomorphism.

For the proof of Proposition~\ref{proposition:pre-abelian} we need the
following two lemmas.
\begin{lemma}
The two pentagons
\begin{equation}
\label{equation:pentagon_with_e}
\begin{tikzcd}
{\Vr\Ar\C} && {\Ar\Vr\C} \\
{\Vr\C} && {\Ar\C} \\
& \C
\arrow["{\lambda}", from=1-1, to=1-3]
\arrow["{\Vr F}"', from=1-1, to=2-1]
\arrow["{\Ar\epsilon^{\C}}", from=1-3, to=2-3]
\arrow[""{name=0, anchor=center, inner sep=0}, "{\epsilon^\C}"',
from=2-1, curve={height=12pt}, to=3-2]
\arrow["{F}", from=2-3, to=3-2, curve={height=-12pt}]
\arrow["\epsilon^{\Ar\C}"', dashed, dash pattern=on 5pt off 4pt, from=1-1, to=2-3]
\end{tikzcd}
\quad
\begin{tikzcd}
{\Vr\Ar\C} && {\Ar\Vr\C} \\
{\Vr\C} && {\Ar\C} \\
& \C
\arrow["{\lambda}", from=1-1, to=1-3]
\arrow["{\Vr \epsilon_{\C}}"', from=1-1, to=2-1]
\arrow["{\Ar G}", from=1-3, to=2-3]
\arrow[""{name=0, anchor=center, inner sep=0}, "{G}"',
from=2-1, curve={height=12pt}, to=3-2]
\arrow["{\epsilon_\C}", from=2-3, to=3-2, curve={height=-12pt}]
\arrow["\epsilon_{\Vr\C}", dashed, dash pattern=on 5pt off 4pt, from=1-3, to=2-1]
\end{tikzcd}
\end{equation}
commute for arbitrary functors $F\colon
\Ar\C\to \C$ and $G\colon \Vr\C\to \C$.
\end{lemma}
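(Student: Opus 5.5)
Each pentagon in \eqref{equation:pentagon_with_e} splits along its dashed arrow into a triangle and a square. I will show separately that the triangle and the square commute; pasting them then gives the pentagon.

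\emph{The triangle.} For the left pentagon I claim that
\[
\Ar\epsilon^\C\cdot\lambda=\epsilon^{\Ar\C}\colon \Vr\Ar\C\longrightarrow \Ar\C .
\]
This is the counit half of the comonad version of the distributive law axioms~\eqref{Jarka se vratila z Kolumbie.} for $\lambda$. I would check it directly, because all the functors involved are precomposition functors.
\begin{itemize}
\item Identify $\Vr\Ar\C$ and $\Ar\Vr\C$ with $[\dvedve\x\dvedve,\C]$.
\item Then $\epsilon^{\Ar\C}$ is precomposition with the functor $\dvedve\to\dvedve\x\dvedve$ that puts $1$ into the outer coordinate.
\item The composite $\Ar\epsilon^\C\cdot\lambda$ is precomposition with $\tau$ followed by the inclusion putting $1$ into the inner coordinate.
\end{itemize}
These agree because the triangle $\tau\cdot(1\x\id)=\id\x 1$ commutes; this is the first triangle displayed in the proof of Proposition~\ref{proposition:composite_comonad}. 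On objects the check is just that transposing a square and then taking its bottom edge is the same as taking its right edge.

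\emph{The square.} The remaining region says
\[
F\cdot\epsilon^{\Ar\C}=\epsilon^\C\cdot\Vr F .
\]
This is exactly the naturality square of the counit $\epsilon^\bullet\colon\Vr\Rightarrow\id_{\Cat}$ at the functor $F\colon\Ar\C\to\C$. Here we use that $\Vr=\Ar$ as functors, so $\Vr F=\Ar F$ is postcomposition with $F$, and evaluation at $1$ commutes with postcomposition. No property of $F$ is needed, which is why the lemma holds for arbitrary $F$.

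\emph{The right pentagon.} This is dual, with the dashed arrow $\epsilon_{\Vr\C}$.
\begin{itemize}
\item The triangle $\Vr\epsilon_\C\cdot\lambda^{-1}=\epsilon_{\Vr\C}$, read with the orientation of the diagram, follows from the commuting triangle $\tau\cdot(0\x\id)=\id\x 0$ (the third diagram in the same proof), using that $\lambda=\lambda^{-1}$ since $\tau$ is an involution.
\item The square $G\cdot\epsilon_{\Vr\C}=\epsilon_\C\cdot\Ar G$ is naturality of $\epsilon_\bullet$ at $G$.
\end{itemize}

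The only step requiring care is the bookkeeping in the triangle: keeping track of which coordinate of $\dvedve\x\dvedve$ corresponds to the outer and which to the inner comonad, so that the evaluation at $1$ (resp.\ $0$) lands on the correct edge of the square. I would settle this by computing both sides on a general square $S$ as in~\eqref{equation:square}, and on morphisms of such squares.
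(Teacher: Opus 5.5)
Your proof is correct and is the paper's argument. The paper also splits each pentagon along the dashed arrow: the counit axioms $\Ar\epsilon^\C\cdot\lambda=\epsilon^{\Ar\C}$ and $\Vr\epsilon_\C=\epsilon_{\Vr\C}\cdot\lambda$ of the distributive law give the triangles, and naturality of the counits $\epsilon^\bullet$, $\epsilon_\bullet$ gives the squares. Your explicit check of the counit axioms via the triangles for $\tau$ simply unpacks what the paper cites as ``$\lambda$ is a distributive law''.
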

\pf 
Since $\lambda$ is a distributive law it holds $\Ar\epsilon^\C \cdot
\lambda = \epsilon^{\Ar\C}$ and $\Vr\epsilon_\C= \epsilon_{\Vr\C}\cdot
\lambda$, and hence the pentagons commute by naturality of the counits
$\epsilon^\bullet\colon \Vr\Rightarrow \id, \epsilon_\bullet\colon
\Ar\Rightarrow \id$. 
\epf

\begin{lemma}
\label{lemma:lax_distr_concrete}
Let $(\C,\eC,\phi_1)$ be a colax $\T_1$-algebra, $(\C,\eK,\phi_2)$ a
lax $\T_2$-algebra, and $\psi$ a 2-cell filling the
pentagon
\[
\begin{tikzcd}
{\Vr\Ar\C} && {\Ar\Vr\C} \\
{\Vr\C} && {\Ar\C} \ . \\
& \C
\arrow["{\lambda}","{\cong}"', from=1-1, to=1-3]
\arrow["{\Vr\eK}"', from=1-1, to=2-1]
\arrow["{\Ar\eC}", from=1-3, to=2-3]
\arrow[""{name=0, anchor=center, inner sep=0}, "{\eC}"',
from=2-1, curve={height=12pt}, to=3-2]
\arrow["{\eK}", from=2-3, to=3-2, curve={height=-12pt}]
\arrow["\psi", shorten >=5pt, shorten <=5pt,    Rightarrow, from=2-1, to=2-3]
\end{tikzcd}
\] 
\begin{subequations}
Such a 2-cell  is a lax rewriting rule of Definition~\ref{definition:lax_rewriting} if the following four conditions
\label{Martin odjel}
\begin{equation}
    \label{equation:distr1}
   % https://q.uiver.app/#q=WzAsNSxbMiwwLCJBXjJcXEMiXSxbMCwxLCJBXFxDIl0sWzEsMiwiXFxDIl0sWzIsMSwiQVxcQyJdLFswLDAsIkFeMlxcQyJdLFsxLDIsIlxcZUMiLDJdLFszLDIsIlxcZXBzaWxvbl9cXEMiXSxbNCwwLCJsX1xcQyJdLFswLDMsIkFcXGVDIl0sWzQsMSwiQVxcZUsiLDIseyJjdXJ2ZSI6Mn1dLFs0LDEsInNcXGVwc2lsb25fXFxDIiwwLHsiY3VydmUiOi0yfV0sWzksMTAsInNcXGlvdGEiLDAseyJzaG9ydGVuIjp7InNvdXJjZSI6MjAsInRhcmdldCI6MjB9fV1d
\begin{tikzcd}
	{\Vr\Ar\C} && {\Ar\Vr\C} \\
	{\Vr\C} && {\Ar\C} \\
	& \C
	\arrow["{\lambda}", from=1-1, to=1-3]
	\arrow[""{name=0, anchor=center, inner sep=0}, "{\Vr\eK}"', curve={height=12pt}, from=1-1, to=2-1]
	\arrow[""{name=1, anchor=center, inner sep=0},
        "{\Vr\epsilon_\C}", curve={height=-12pt}, from=1-1, to=2-1]
	\arrow["{\Ar\eC}", from=1-3, to=2-3]
	\arrow["\eC"', from=2-1, to=3-2, curve={height=12pt}]
	\arrow["{\epsilon_\C}", from=2-3, to=3-2,  curve={height=-12pt}]
	\arrow["{\Vr\kappa}",shorten >=4pt, shorten <=4pt,  Rightarrow, from=0, to=1]
\end{tikzcd}
=
% https://q.uiver.app/#q=WzAsNSxbMiwwLCJBXjJcXEMiXSxbMCwxLCJBXFxDIl0sWzEsMiwiXFxDIl0sWzIsMSwiQVxcQyJdLFswLDAsIkFeMlxcQyJdLFsxLDIsIlxcZUMiLDJdLFszLDIsIlxcZXBzaWxvbl9cXEMiLDAseyJjdXJ2ZSI6LTJ9XSxbNCwwLCJsX1xcQyJdLFswLDMsIkFcXGVDIl0sWzQsMSwiQVxcZUsiLDJdLFszLDIsIlxcZUsiLDIseyJjdXJ2ZSI6Mn1dLFsxMCw2LCJcXGlvdGEiLDAseyJzaG9ydGVuIjp7InNvdXJjZSI6MjAsInRhcmdldCI6MjB9fV0sWzksOCwiXFxsYW1iZGFfXFxDIiwwLHsic2hvcnRlbiI6eyJzb3VyY2UiOjIwLCJ0YXJnZXQiOjIwfX1dXQ==
\begin{tikzcd}
	{\Vr\Ar\C} && {\Ar\Vr\C} \\
	{\Vr\C} && {\Ar\C} \\
	& \C
	\arrow["{\lambda}", from=1-1, to=1-3]
	\arrow[""{name=0, anchor=center, inner sep=0}, "{\Vr\eK}"', from=1-1, to=2-1]
	\arrow[""{name=1, anchor=center, inner sep=0}, "{\Ar\eC}", from=1-3, to=2-3]
	\arrow["\eC"', from=2-1, to=3-2,  curve={height=12pt}]
	\arrow[""{name=2, anchor=center, inner sep=0}, "{\epsilon_\C}", curve={height=-12pt}, from=2-3, to=3-2]
	\arrow[""{name=3, anchor=center, inner sep=0}, "\eK"',
        curve={height=12pt}, from=2-3, to=3-2]	
\arrow["{\psi}", Rightarrow, shorten >=10pt, shorten <=10pt, from=0, to=1]
	\arrow["\kappa",shorten >=3pt, shorten <=3pt, Rightarrow, from=3, to=2]
\end{tikzcd}
\end{equation}
\begin{equation}
    \label{equation:distr2}
  % https://q.uiver.app/#q=WzAsNSxbMiwwLCJBXjJcXEMiXSxbMCwxLCJBXFxDIl0sWzEsMiwiXFxDIl0sWzIsMSwiQVxcQyJdLFswLDAsIkFeMlxcQyJdLFsxLDIsIlxcZUMiLDIseyJjdXJ2ZSI6Mn1dLFszLDIsIlxcZUsiXSxbNCwwLCJsX1xcQyJdLFswLDMsIkFcXGVwc2lsb25eXFxDIl0sWzQsMSwiQVxcZUsiXSxbMSwyLCJcXGVwc2lsb25eXFxDIiwwLHsiY3VydmUiOi0yfV0sWzUsMTAsIlxccGkiXV0=
\begin{tikzcd}
	{\Vr\Ar\C} && {\Ar\Vr\C} \\
	{\Vr\C} && {\Ar\C} \\
	& \C
	\arrow["{\lambda}", from=1-1, to=1-3]
	\arrow[""{name=2, anchor=center, inner sep=0}, "{\Vr\eK}"', from=1-1, to=2-1]
	\arrow[""{name=3, anchor=center, inner sep=0},"{\Ar\eC}", from=1-3, to=2-3]
	\arrow[""{name=0, anchor=center, inner sep=0}, "\epsilon^\C"',
        curve={height=12pt}, from=2-1, to=3-2]
\arrow[""{name=1, anchor=center, inner sep=0}, "{\eC}", curve={height=-12pt}, from=2-1, to=3-2]
	\arrow["\eK", from=2-3, to=3-2, curve={height=-12pt}]
	\arrow["\pi",shorten >=3pt, shorten <=3pt, Rightarrow, from=0, to=1]
    \arrow["{\psi}", shorten >=10pt, shorten <=10pt, Rightarrow, from=2, to=3]
\end{tikzcd}
= 
\begin{tikzcd}
	{\Vr\Ar\C} && {\Ar\Vr\C} \\
	{\Vr\C} && {\Ar\C} \\
	& \C
	\arrow["{\lambda}", from=1-1, to=1-3]
	\arrow["{\Ar\eK}"', from=1-1, to=2-1]
	\arrow[""{name=0, anchor=center, inner sep=0}, "{\Ar\epsilon^\C}"', curve={height=12pt}, from=1-3, to=2-3]
	\arrow[""{name=1, anchor=center, inner sep=0}, "{\Ar\eC}", curve={height=-12pt}, from=1-3, to=2-3]
	\arrow["\epsilon^\C"', from=2-1, to=3-2, curve={height=12pt}]
	\arrow["\eK", from=2-3, to=3-2,  curve={height=-12pt}]
	\arrow["{\Ar\pi}",shorten >=3pt, shorten <=3pt, Rightarrow, from=0, to=1]
\end{tikzcd}
\end{equation}
\begin{equation}\label{equation:psi_unit_1}
    \psi\cdot \lambda^{-1}\cdot \Ar\beta^\C=\id_{\alpha_2}, \hbox {
      and}
\end{equation}
\begin{equation}\label{equation:psi_unit_2}
   \psi\cdot \Vr\beta_\C=\id_{\alpha_1}
\end{equation} 
\end{subequations}
hold.
\end{lemma}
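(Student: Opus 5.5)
The plan is to derive each of the four conditions of Definition~\ref{definition:lax_rewriting} from the corresponding condition of the lemma. In the present setting $\T_1=\Vr$ and $\T_2=\Ar$ (on $\Ar\Vr$-coalgebras), with $\mu_1=\Vr\epsilon^\C$, $\mu_2=\Ar\epsilon_\C$, $\eta_1=\beta^\C$ and $\eta_2=\beta_\C$.

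\textbf{Unit conditions.} These are immediate. Substituting $\eta_1=\beta^\C$ into (\ref{equation:lax_distr_condition_3_mixed}), with the distributive law inserted as in the pentagon of the lemma, gives $\psi\cdot\lambda^{-1}\cdot\Ar\beta^\C=\id_{\alpha_2}$, which is (\ref{equation:psi_unit_1}). Likewise (\ref{equation:lax_distr_condition_4_mixed}) is (\ref{equation:psi_unit_2}).

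\textbf{Condition (\ref{equation:lax_distr_condition_1_mixed}).} I first substitute the formula (\ref{equation:phi_via_nu}), $\phi_2=\alpha_2\cdot\Ar\kappa$, into both sides.

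On the left-hand side, $\phi_2$ is pasted with $\T_2\psi=\Ar\psi$. Applying the $2$-functor $\Ar$ to (\ref{equation:distr1}) rewrites $\Ar\psi$ followed by $\Ar\kappa$ as $\Ar\Vr\kappa$ followed by $\Ar$ of the pentagon of the first lemma (\ref{equation:pentagon_with_e}).

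On the right-hand side, $\T_1\phi_2=\Vr\alpha_2\cdot\Vr\Ar\kappa$, transported through $\lambda$. Naturality of $\psi$ with respect to the $2$-cell $\Ar\kappa\colon\Ar\alpha_2\Rightarrow\Ar\epsilon_\C$ lets me move this whiskering across $\psi$, where it becomes $\Vr\kappa$ on the composite.

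After these rewritings both sides become the same pasting: $\psi$ whiskered by $\mu_2$, followed by the same whiskered copy of $\kappa$. The strict identities $\Ar\epsilon_\C\cdot\lambda$ versus $\epsilon_{\Vr\C}$ from the first lemma identify the boundary $1$-cells.

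\textbf{Condition (\ref{equation:lax_distr_condition_2_mixed}).} This is the dual argument. Dually to (\ref{equation:phi_via_nu}), the colax constraint is $\phi_1=\alpha_1\cdot\Vr\pi$; this is the dual of \cite[Proposition~6]{pasticio}, which I will invoke. I then apply $\Vr$ to (\ref{equation:distr2}), use the left pentagon of (\ref{equation:pentagon_with_e}), and use naturality of $\psi$ with respect to $\Vr\pi$.

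\textbf{Main obstacle.} The hard step is the $2$-cell bookkeeping in the two cube conditions. Three things must be tracked carefully: where $\lambda$ and $\lambda^{-1}$ sit, whether a given $\kappa$ is whiskered by $\Ar$ or passes through $\lambda$ to become $\Vr\kappa$, and the orientation of the colax $2$-cells in (\ref{equation:lax_distr_condition_2_mixed}). To keep this under control, I will check the identities componentwise on an object $S\in\Ar^2\C$ (or its analogue in $\Ar\Vr^2\C$), where every $2$-cell is a morphism in $\C$. Each condition then reduces to an equality of two composites built from $\psi$, the kernel maps $\kappa$ and the cokernel maps $\pi$. Lemma conditions (\ref{equation:distr1}) and (\ref{equation:distr2}) supply exactly the needed commutation, after substitution at the appropriate whiskered components.
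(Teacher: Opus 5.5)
Your proposal follows essentially the same route as the paper. The unit conditions are read off directly. Condition (\ref{equation:lax_distr_condition_1_mixed}) comes from substituting $\phi_2=\alpha_2\cdot\T_2\kappa$, rewriting $\T_2$ of (\ref{equation:distr1}) on the left-hand side, and interchanging $\psi$ with $\T_1\T_2\kappa$ on the right-hand side. Condition (\ref{equation:lax_distr_condition_2_mixed}) is the dual argument, using $\phi_1=\alpha_1\cdot\T_1\pi$ and (\ref{equation:distr2}).

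There is one small slip in your description of the common form of both sides. It is $\bigl(\alpha_2\cdot\T_2(\alpha_1\cdot\T_1\kappa)\bigr)\Box(\psi\cdot\T_2\T_1\alpha_2)$, so when $\psi$ is applied first it is whiskered by $\T_2\T_1\alpha_2$. It is whiskered by $\mu_2$ only in the other form, where the $\kappa$-cell is applied before it.
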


\begin{proof}
We show how to 
derive equalities in \eqref{Jsem tak prijemne rozplacly.} of
Definition~\ref{definition:lax_rewriting} from the equalities in
\eqref{Martin odjel} by means of an algebraic manipulation. We begin by
recalling the available equalities and facts at our disposal. To simplify the formulas
we ignore all $\lambda$'s and its iterates.
\begin{itemize}[topsep=0pt, partopsep=0pt, itemsep=0pt, itemindent=0pt, leftmargin=2em]
 \item The effect of $\T_1$ and $\T_2$ on the underlying categories is the arrow functor $\T_1\C=\Vr\C$ and $\T_2\C=\Ar\C$,
\item $\T_1\T_2=\T_2\T_1$ (ignoring invertible $\lambda$),
\item $\mu_2\T_1=\T_1\mu_2$ (ignoring invertible $\lambda$),
\item $\mu_2=\T_2\epsilon_\bullet$ (the definition of $\mu_2$),
\item $\phi_2=\alpha_2\cdot\T_2\kappa$ (equality \eqref{equation:phi_via_nu}),
\item $\alpha_1\cdot\T_1\kappa=(\kappa\cdot\T_2\alpha_1)\Box\psi$
      (assumption \eqref{equation:distr1}).
\item For arbitrary $2$-cells $\gamma'\colon f'\Rightarrow g'$, $\gamma''\colon
  f''\Rightarrow g''$ with $\textrm{dom } f''=\textrm{dom }
  g''=\textrm{cod } f'=\textrm{cod } g'$, i.e.
\[
\begin{tikzcd}
	\bullet && \bullet && \bullet 
	\arrow[""{name=0, anchor=center, inner sep=0}, "f'", curve={height=-24pt}, from=1-1, to=1-3]
	\arrow[""{name=1, anchor=center, inner sep=0}, "g'"', curve={height=24pt}, from=1-1, to=1-3]
	\arrow[""{name=2, anchor=center, inner sep=0}, "f''", curve={height=-24pt}, from=1-3, to=1-5]
	\arrow[""{name=3, anchor=center, inner sep=0}, "g''"', curve={height=24pt}, from=1-3, to=1-5]
	\arrow["\gamma'"'{description}, shorten >=2pt, shorten <=2pt,  Rightarrow, from=0, to=1]
	\arrow["\gamma''"'{description},  
shorten >=2pt, shorten <=2pt, Rightarrow, from=2, to=3]
\end{tikzcd}
\]
we have the interchange 
    $$
   (\gamma''\cdot g')\Box(f''\cdot \gamma')= (g''\cdot \gamma')\Box(\gamma''\cdot f').
    $$
\end{itemize}
Using the above, we rewrite 
the left hand side of~\eqref{equation:lax_distr_condition_1_mixed} as
\begin{align*}
&(\phi_2\cdot\T_2^2\alpha_1)\Box(\alpha_2\cdot\T_2\psi)\Box(\psi\cdot\T_2\T_1\alpha_2)=
(\alpha_2\cdot\T_2\kappa\cdot\T_2^2\alpha_1)\Box(\alpha_2\cdot\T_2\psi)\Box(\psi\cdot\T_2\T_1\alpha_2)=\\
&\left(\alpha_2\cdot\T_2(\kappa\cdot\T_2\alpha_1\Box\psi)\right)\Box(\psi\cdot\T_2\T_1\alpha_2)
=\big(\alpha_2\cdot\T_2(\alpha_1\cdot\T_1\kappa)\big)\Box(\psi\cdot\T_2\T_1\alpha_2).
\end{align*}
Likewise, the right hand side
of~\eqref{equation:lax_distr_condition_1_mixed} 
rewrites as
\begin{align*}
&(\psi\cdot\mu_2\T_1)\Box(\alpha_1\cdot\T_1\phi_2)=
(\psi\cdot\T_1\mu_2)\Box(\alpha_1\cdot\T_1\phi_2)=
\\
&(\psi\cdot\T_1\mu_2)\Box \big((\alpha_1\cdot\T_1\alpha_2)\cdot\T_1\T_2\kappa\big)=
\big((\alpha_2\cdot\T_2\alpha_1)\cdot\T_1\T_2\kappa\big)\Box(\psi\cdot\T_1\T_2\alpha_2)=
\\
&\big((\alpha_2\cdot\T_2\alpha_1)\cdot\T_2\T_1\kappa\big)\Box(\psi\cdot\T_2\T_1\alpha_2)=\big(\alpha_2\cdot\T_2(\alpha_1\cdot\T_1\kappa)\big)\Box(\psi\cdot\T_2\T_1\alpha_2).
\end{align*}
We conclude that the two sides of~\eqref{equation:lax_distr_condition_1_mixed}
are equal. 
The equality of the terms in the second line of the 
second computation follows from the interchange of 
% https://q.uiver.app/#q=WzAsMyxbMCwwLCJcXGJ1bGxldCJdLFsyLDAsIlxcYnVsbGV0Il0sWzQsMCwiXFxidWxsZXQiXSxbMCwxLCJcXFRfMVxcVF8yXFxhbHBoYV8yIiwwLHsiY3VydmUiOi00fV0sWzAsMSwiXFxUXzFcXG11XzIiLDIseyJjdXJ2ZSI6NH1dLFsxLDIsIlxcYWxwaGFfMlxcY2RvdFxcVF8yXFxhbHBoYV8xIiwwLHsiY3VydmUiOi00fV0sWzEsMiwiXFxhbHBoYV8xXFxjZG90XFxUXzFcXGFscGhhXzIiLDIseyJjdXJ2ZSI6NH1dLFszLDQsIlxcVF8xXFxUXzJcXGlvdGEiLDIseyJzaG9ydGVuIjp7InNvdXJjZSI6MjAsInRhcmdldCI6MjB9fV0sWzUsNiwiXFxwc2kiLDIseyJzaG9ydGVuIjp7InNvdXJjZSI6MjAsInRhcmdldCI6MjB9fV1d
\[\begin{tikzcd}
	\bullet && \bullet && \bullet
	\arrow[""{name=0, anchor=center, inner sep=0}, "{\T_1\T_2\alpha_2}", curve={height=-24pt}, from=1-1, to=1-3]
	\arrow[""{name=1, anchor=center, inner sep=0}, "{\T_1\mu_2}"', curve={height=24pt}, from=1-1, to=1-3]
	\arrow[""{name=2, anchor=center, inner sep=0}, "{\alpha_1\cdot\T_1\alpha_2}", curve={height=-24pt}, from=1-3, to=1-5]
	\arrow[""{name=3, anchor=center, inner sep=0}, "{\alpha_2\cdot\T_2\alpha_1}"', curve={height=24pt}, from=1-3, to=1-5]
	\arrow["{\T_1\T_2\kappa}"'{description},  Rightarrow, shorten >=2pt, shorten <=2pt,from=0, to=1]
	\arrow["\psi"'{description}, shorten >=2pt, shorten <=2pt, Rightarrow, from=2, to=3]
\end{tikzcd} . 
\]
Similarly, equality~\eqref{equation:lax_distr_condition_2_mixed}
follows from
\eqref{equation:distr2}. Equalities~\eqref{equation:psi_unit_1} and
\eqref{equation:psi_unit_2} are simply~\eqref{equation:lax_distr_condition_3_mixed} and
\eqref{equation:lax_distr_condition_4_mixed}, respectively,
specialized to the current situation.
\end{proof}

\begin{proof}[Proof of Proposition~\ref{proposition:pre-abelian}] 
It follows from Corollaries~\ref{corollary:T_2_kernels} 
and~\ref{corollary:T_1_cokernels} that a pointed category equipped
with the colax and lax algebra structures as in the proposition
has cokernels and kernels. Let us prove the converse, assuming that
$\C$ is a pointed category with chosen, normalized cokernels and kernels. 

The components of the 2-cell $\psi$ in~\eqref{Jarka je uz druhy den na
  Sardinii.} are determined by the universal
properties of cokernels and kernels in
diagram~\eqref{diagram:abelian_for_S};  their naturality also
follows from these universal properties. Note that the two commutative
triangles in the upper right corner 
of~\eqref{diagram:abelian_for_S} give precisely
equalities \eqref{equation:distr1} and
\eqref{equation:distr2}. Equalities~\eqref{equation:psi_unit_1} and
\eqref{equation:psi_unit_2} are easy to verify by inspecting the 
diagrams
\[
\xymatrix@R=.6em@C=.6em{
a \ar@{=}[dd]   \ar[rrr]^f&&&b\ar@{=}[dd]  \ar@{->>}[r]  &\coker f
\ar@{=}[rd]^{\psi_{\beta_\C(f)}}
\\
&&&&& \coker f \ar@{=}[d]
\\
a \ar[ddd] \ar[rrr]^f &&&b \ar[ddd] \ar@{->>}[rr]  &&\coker f \ar[ddd]
\\
\\
\\
0 \ar@{=}[rrr]&&& 0 \ar@{=}[rr]&&0
}
\hskip 5em
\xymatrix@R=.6em@C=.6em{
0 \ar@{=}[dd] \ar[rrr]  &&&\ker f\ar@{>->}[dd]  \ar@{=}[r]  &\ker f
\ar@{=}[rd]^{\psi_{\lambda(\beta^\C(f))}}
\\
&&&&& \ker f \ar@{>->}[d]
\\
0 \ar@{=}[ddd] \ar[rrr] &&&a \ar[ddd]^f \ar@{=}[rr]  &&a \ar[ddd]^f
\\
\\
\\
0 \ar[rrr]&&& b \ar@{=}[rr]&&b
}
\]
therefore $\psi$ is a lax rewriting rule by 
Lemma~\ref{lemma:lax_distr_concrete}.

The uniqueness of $\psi$ follows from the exchange rule for the
horizontal composition of 2-cells in the diagram
\[\begin{tikzcd}[row sep={3em,between origins},
  column sep={5em,between origins}]
 & & & \T_1\C & \\
	\T_1\T_2\C & & \T_1\T_2\C & &\C \\
     & \T_1\C &&\T_1\C& 
	\arrow["{\id}"{name=0, anchor=center, inner sep=0.5em, description}, 
    from=2-1, to=2-3, in=120, out=60]
	\arrow["{\T_1\epsilon_\C}"'{near end}, from=2-1, to=3-2, in=172, out=-60]
	\arrow["{\T_1\alpha_2}"{near end}, in=-172, out=60, from=2-3, to=1-4]
	\arrow["{\T_1\beta_\C}"'{near start}, in=-120, out=8, from=3-2, to=2-3]
    \arrow["{\T_2\alpha_1}"'{near end}, in=172, out=-60,from=2-3, to=3-4]
	\arrow["{\alpha_2}"'{near start} , in=-120, out=8, from=3-4, to=2-5]
    \arrow["{\alpha_1}" {near start}, in=120, out=-8, from=1-4, to=2-5]
    \arrow["{\T_1\nu_2}"'{description},  shorten >=2pt, shorten <=2pt, Rightarrow, from=0, to=3-2]
	\arrow["\psi"'{description},  Rightarrow, from=1-4, to=3-4]
\end{tikzcd} 
\]
expressed by the commutativity of the diagram
\begin{equation}
\label{Porad prsi a fouka vitr.}
\xymatrix@C=6em@R=3em{\alpha_1 \cdot \T_1\alpha_2  
\ar@{=>}[r]^(.37){\alpha_1 \cdot \T_1\alpha_2 \cdot \T_1 \nu_2 }
\ar@{=>}[d]_\psi
& (\alpha_1 \cdot \T_1\alpha_2 )\cdot (\T_1 \beta_\C 
  \cdot\T_1 \epsilon_\C )   \ar@{=>}[d]^{\psi \cdot \T_1 \beta_\C 
    \cdot\T_1 \epsilon_\C }
\\
\alpha_2 \cdot \T_2\alpha_1  \ar@{=>}[r]^(.37){\alpha_2 \cdot \T_2\alpha_1
  \cdot \T_1\nu_2}
& (\alpha_2 \cdot \T_2\alpha_1) \cdot (\T_1 \beta_\C 
  \cdot\T_1 \epsilon_\C )
}
\end{equation}
of $2$-cells. In that diagram, the vertical right transformation is the
identity by~\eqref{equation:lax_distr_condition_4_mixed}. We verify,
by evaluating on objects of $\Vr\Ar\C$, the equality (with
$\lambda$ suppressed)
\begin{equation}
\label{Zitra se mozna poleti.}
 \T_2\alpha_1\cdot\T_1\nu_2 = 
\nu_2\cdot\T_2\alpha_1
\end{equation}
which implies 
\[
 \alpha_2 \cdot \T_2\alpha_1
  \cdot \T_1\nu_2=\alpha_2\cdot\nu_2\cdot\T_2\alpha_1.
\]
Moreover
\[
\alpha_2\cdot\nu_2\cdot\T_2\alpha_1=\kappa\cdot\T_2\alpha_1
\]
by~\eqref{equation:iota}, therefore the diagram in~\eqref{Porad prsi a fouka
  vitr.}
takes the form
\[
\xymatrix@C=6em@R=3em{\alpha_1 \cdot \T_1\alpha_2  
\ar@{=>}[r]^(.37){\alpha_1 \cdot \T_1\alpha_2 \cdot \T_1 \nu_2 }
\ar@{=>}[d]_\psi
& (\alpha_1 \cdot \T_1\alpha_2 )\cdot (\T_1 \beta_\C 
  \cdot\T_1 \epsilon_\C )   \ar@{=}[d]
\\
\alpha_2 \cdot \T_2\alpha_1  \ar@{=>}[r]^(.37){\kappa \cdot \T_2 \alpha_1}
& (\alpha_2 \cdot \T_2\alpha_1) \cdot (\T_1 \beta_\C 
  \cdot\T_1 \epsilon_\C ).
}
\]
Since the components of the transformation $\kappa$ are monomorphisms,
$\psi$ is determined by the horizontal $2$-cell in~\eqref{Porad prsi a
  fouka  vitr.}. We emphasize that~(\ref{Zitra se mozna poleti.})
holds if $\T_1$ and $\T_2$ are the arrow category monads, not for general
mixed lax rewriting rules.
\end{proof}

\section{Algebraic characterization of abelian categories}
\label{Pres vikend to htozne foukalo.}

In this section we formulate and prove the main application of the theory
we developed. 
Recall~\cite[Section~VII.3]{MacLane} that an additive 
category $\C$ is {\/\em abelian\/} if  
\begin{itemize}[topsep=0pt, partopsep=0pt, itemsep=0pt,  itemindent=-1em]
\item[(i)] it has a null object,
\item [(ii)]
it has all binary biproduct,
\item[(iii)]
every arrow has a kernel and a cokernel, and
\item[(iv)]
every monomorphism is a kernel and every epimorphism is a cokernel.
\end{itemize}

In the following theorem and its proof, the biproduct of objects
$a_1,a_2$ of the category
$\C$ will be denoted by $a_1 \oplus a_2$.
The biproduct 
comes equipped with projections $p_i : a_1 \oplus a_2 \to a_i$ and
coprojections $\iota_i : a_i \to a_1 \oplus a_2$, $i=1,2$.
A morphism $f:a_1 \oplus a_2
\to y$ is determined by the composites 
$f_i := f \iota_i$, $i=1,2$; we write $f
= f_1+f_2$ trusting that the plus signs suggesting a nonexistent
commutativity will not lead to confusion. Likewise, a morphism $g: z \to a_1
\oplus a_2$ is determined by the composites $p_i g$, $i=1,2$,
and we write $g:= (g_1,g_2)$.    

\begin{theorem}
\label{Dnes jsme s Jarkou pekne zmokli.}
An additive category $\C$ is abelian if and only if it is equipped with a
lax $\T_2$-algebra structure $(\C,\eK,\phi_2)$ and a colax
$\T_1$-algebra structure $(\C,\eC,\phi_1)$, such that $\alpha_1$ and
$\alpha_2$ are tied by a~lax rewriting rule $\psi$
such that 
\begin{itemize}[topsep=0pt, partopsep=0pt, itemsep=0pt, leftmargin=2em]
\item[(i)]
its component $\psi_M$ is an epimorphism for each square $M$
\[
\xymatrix@C=2.5em{
a \ar[r]^(.4){(\id,-f)} \ar@{=}[d] & a\oplus b \ar[d]^{0-\id}
\\
a \ar[r]^f & b \ar@{}[lu]|\hole|M
}
\]
with $f$ a monomorphism, and
\item[(ii)]
its component $\psi_N$ is a monomorphism for each square $N$
\[
\xymatrix@C=2.5em{
a \ar[r]^(.4){(-\id,0)} \ar[d]_f & a\oplus b \ar[d]^{-f+\id}
\\
a \ar@{=}[r]& b  \ar@{}[lu]|\hole|N
}
\]
with $f$ an epimorphism.
\end{itemize}
\end{theorem}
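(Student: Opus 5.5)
By Proposition~\ref{proposition:pre-abelian}, an additive category (which is pointed) carries a colax $\T_1$-algebra and a lax $\T_2$-algebra exactly when it has chosen, normalized cokernels and kernels. In that case these structures admit a unique lax rewriting rule $\psi$, whose components are the comparison maps $\psi_S\colon\coker(\eK(S))\to\ker(\eC(\lambda(S)))$ of diagram~\eqref{diagram:abelian_for_S}. Since $\psi$ is forced, the theorem reduces to a statement about an additive category with kernels and cokernels: axiom (iv) holds if and only if conditions (i) and (ii) hold. The plan is to compute $\psi_M$ and $\psi_N$ explicitly and identify them with the classical comparison maps. (i) and (ii) are formally dual, swapping kernels with cokernels and $M$ with $N$, so only (i) versus ``every mono is a kernel'' needs to be treated in detail.

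\textbf{Computing $\psi_M$.} For the square $M$ the top edge is $h_0=(\id,-f)$ and the right edge is $0-\id$.
\begin{itemize}
\item The kernel of $0-\id\colon a\oplus b\to b$ is $\iota_1\colon a\to a\oplus b$, and $\ker(\id_a)=0$. Hence $\eK(M)\colon 0\to a$ and $\coker(\eK(M))=a$.
\item The cokernel of $h_0=(\id,-f)$ is $f+\id\colon a\oplus b\to b$: a map $g=g_1+g_2$ kills $(\id,-f)$ iff $g_1=g_2f$, so it factors uniquely through $g_2$.
\item The bottom edge is $h_1=f$, so $\eC(\lambda(M))$ is the induced map $b\to\coker f$, namely $\pi_f$.
\end{itemize}
Thus $\ker(\eC(\lambda(M)))=\ker(\coker f)$, and chasing diagram~\eqref{diagram:abelian_for_S} shows that $\psi_M\colon a\to\ker(\coker f)$ is the canonical comparison map, satisfying $\kappa_{\pi_f}\psi_M=f$.

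Dually, $\psi_N\colon\coker(\ker f)\to b$ is the canonical map, with $\psi_N\pi_{\kappa_f}=f$. I will check this with the same biproduct bookkeeping, taking care with the signs in $(-\id,0)$ and $-f+\id$.

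\textbf{Forward direction.} If $\C$ is abelian and $f$ is a monomorphism, then $f$ is the kernel of its cokernel, so $\psi_M$ is an isomorphism and in particular an epimorphism. Dually, $\psi_N$ is a monomorphism whenever $f$ is an epimorphism.

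\textbf{Converse.} Let $f$ be a monomorphism. Because $\kappa_{\pi_f}\psi_M=f$ with $f$ mono, $\psi_M$ is always a monomorphism, and (i) adds that it is an epimorphism. The main obstacle is upgrading ``mono and epi'' to ``isomorphism'', which is false in general pre-abelian categories, so the two hypotheses must be combined. The argument I intend is the following.
\begin{itemize}
\item Apply (ii) to the epimorphism $\psi_M$, and dually (i) to monomorphisms obtained from $\psi_N$.
\item Use the standard pre-abelian identities $\coker(\ker(\coker f))=\coker f$ and $\ker(\coker(\ker g))=\ker g$. Together with $\psi_M$ being epi, the first gives $\coker f=\coker(\kappa_{\pi_f})$.
\item From this, show that $\psi_M$ is the kernel of $\coker(\psi_M)=0$, or that it is split, so that it is an isomorphism and $f\cong\ker(\coker f)$ is a kernel.
\end{itemize}
If this route does not close, the fallback is the classical argument that a pre-abelian category in which the canonical map $\operatorname{coim}\to\operatorname{im}$ is epi and mono in the appropriate sense is abelian. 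Conditions (i) and (ii) would be used precisely to supply the two halves of that statement.
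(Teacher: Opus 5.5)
Your reduction via Proposition~\ref{proposition:pre-abelian} is correct and agrees with the paper. So are the identification of $\psi_M$ with the comparison map $a\to\ker(\coker f)$ (the paper's $\kappa_{-\pi_f}\psi_M=f$; the induced map $\eC(\lambda(M))$ is $-\pi_f$, not $\pi_f$, which is harmless), its dual for $\psi_N$, and the forward direction.

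The gap is in the converse, exactly where you located it, and none of your proposed repairs can close it.
\begin{itemize}
\item Applying (ii) to the epimorphism $e=\psi_M$ is vacuous. Since $e$ is mono, $\ker e=0$ and $\coker(\ker e)=a$, so the comparison map that (ii) declares mono is $e$ itself. Symmetrically, applying (i) to $e$ only returns that $e$ is epi, because $\coker e=0$.
\item ``$\psi_M$ is the kernel of $\coker(\psi_M)=0$'' merely restates that $\psi_M$ is invertible.
\item Your fallback is false: a pre-abelian category in which every comparison $\mathrm{coim}\,g\to\mathrm{im}\,g$ is mono and epi need not be abelian.
\end{itemize}

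In fact the converse direction of the statement, as formulated, is false, so no argument can work. Take the category of torsion-free abelian groups.
\begin{itemize}
\item It is additive and kernels are the usual ones. The cokernel of $g\colon A\to B$ is $B\to B/\overline{g(A)}$, where $\overline{g(A)}=\{y\in B: ny\in g(A)\ \text{for some}\ n\neq 0\}$.
\item Monomorphisms are the injective maps, and $g$ is an epimorphism iff $B/g(A)$ is torsion.
\item For a mono $f\colon a\to b$, $\psi_M$ is $f$ corestricted to $\ker(\coker f)=\overline{f(a)}$. Its cokernel in abelian groups, $\overline{f(a)}/f(a)$, is torsion, so $\psi_M$ is epi and (i) holds.
\item For an epi $f$, $a/\ker f\cong f(a)\subseteq b$ is torsion-free. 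Hence $\coker(\ker f)=a/\ker f$ and $\psi_N$ is injective, so (ii) holds.
\end{itemize}
By Proposition~\ref{proposition:pre-abelian} the algebra structures and $\psi$ exist. Yet $2\colon\mathbb{Z}\to\mathbb{Z}$ is a monomorphism that is not a kernel; for it, $\psi_M$ is $2$ itself, which is mono and epi but not iso.

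The paper's own proof makes precisely the step you distrusted. It says $\Psi_M$ is mono, ``and hence, by assumption, it is actually an isomorphism.'' That is the inference mono $+$ epi $\Rightarrow$ iso, which is invalid in pre-abelian categories.

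What is true, by your computation together with the paper's short argument, is the version in which (i) and (ii) require $\psi_M$ and $\psi_N$ to be isomorphisms. Since $\psi_M$ is automatically mono and $\psi_N$ automatically epi, this is equivalent to asking that $\psi_M$ be a regular epimorphism and $\psi_N$ a regular monomorphism.
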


\begin{remark} 
It can be shown that conditions (i)--(ii) of Theorem~\ref{Dnes jsme s
  Jarkou pekne zmokli.}  are
equivalent to requiring that the component $\psi_X$ be an isomorphism
for each square $X$
\[
\xymatrix@C=3em{a \ar[r]^u \ar[d]_f  &b \ar[d]^g
\\
c \ar[r]^v&d  \ar@{}[lu]|\hole|X
}
\]
in which $f$ is an epimorphism and $v$ a monomorphism.
\end{remark}

\begin{proof}[Proof of Theorem~\ref{Dnes jsme s Jarkou pekne zmokli.}]
The component $\Psi_M$ of the transformation $\Psi$ featured in
item~(i) of the theorem appears in the diagram
\begin{equation}
\label{inj}
\xymatrix@R=1.2em@C=1.2em{0\ar@{>->}[rrr] \ar@{>->}[dd]  &&&
  \ar@{>->}[dd]^{\kappa_{0-\id}} \Ker(0-\id)  \ar@{=}[r]   
& \Ker(0-\id)   \ar@/^1em/[rd]^{\psi_M}
\\
&&&&& \Ker(\alpha_1(M)) \ar@{>->}[d]^(.5){\kappa_{\alpha_1(M)}}
\\
a \ar[rrr]^{(\id,-f)} \ar@{=}[ddd]  
&&& \ar[ddd]^{0-\id} a\oplus b \ar@{->>}[rr]^(.4){\pi_{(\id,-f)}}  
&& \Coker(\id,-f)\ar@{->>}[ddd]^{\alpha_1(M)}
\\
\\
\\
a \ar@{>->}[rrr]^f &&& b\ar@{->>}[rr]^{\pi_f}  \ar@{}[llluuu]|M   
&& \Coker(f),
}
\end{equation}
the diagram for $\Psi_N$ in~(ii) is similar.  

The crucial observation is that whether $\Psi_M$ is an
isomorphism is independent of the choice of  the kernel of
$0-\id$ and of the cokernel of $(\id,-f)$. This follows from the upper right
square of the `doubling' of~\eqref{inj} portrayed in
Figure~\ref{Greta je na oprave v Dobrisi.}. In that diagram, 
$\Psi'_M$, $\Psi''_M$ and the unnamed
isomorphisms are induced by universal properties, so its upper right
corner commutes.  
\begin{figure}
\[
\xymatrix@C=1.5em@R=1.5em{
&& \Ker(0-1)'' \ar@/^3.8em/[rrrddd]^{\Psi''_M} \ar@{<->}[dd]^\cong 
\ar@{>->}@/_4em/[dddd]_(.2){\kappa''_{0-\id}}|(.48)\hole
\\
0  \ar@{>->}[ddd] \ar@{}[rrrrr]|{\hbox{\tt upper right}}
\ar@{>->}@/^1.5em/[urr]  \ar@{>->}@/_1.5em/[drr] &&&&& 
\\
&&\Ker(0-1)'   \ar@{>->}[dd]^{\kappa'_{0-\id}}  \ar@/^1.5em/[rd]^{\Psi'_M}
\\
&&&  \Ker(\alpha_1'(M))' \ar@{>->}[dd]^(.4){\kappa_{\alpha'_1(M)}}  
\ar @{<->}[rr]^\cong   & & \Ker(\alpha_1''(M))'' 
\ar@{>->}[dd]^(.4){\kappa_{\alpha''_1(M)}}  
\\
a\ar@{>->}[rr]^{(\id,-f)} \ar@{=}[ddd]  && 
a\oplus b \ar@{->>}@/^1.2em/[drrr]^(.65){\pi''_{(\id,-f)}}|(.345)\hole     
\ar@{->>}@/_1.5em/[dr]_{\pi'_{(\id,-f)}}  \ar[ddd]_{0-\id}
\\
&&& \Coker(\id,-f)' \ar@/_2em/[ddr]^{\alpha_1'(M)}  \ar@{<->}[rr]^\cong & & 
\Coker(\id,-f)''  \ar@/^2em/[ddl]_{\alpha_1''(M)}
\\
\\
a\ar@{>->}[rr]^f && b \ar@{->>}[rr]^(.4){\pi_f}  && \Coker(f)
}
\]
\caption{The doubling of~\eqref{inj}.\label{Greta je na oprave v Dobrisi.}}
\end{figure}
We may thus assume that the kernels and cokernels are chosen as in
\begin{equation}
\label{Pozitri letim do Sevily.}
\xymatrix@C=5em@R=2em{
0 \ar@{>->}[r]\ar@{>->}[dd]&
a \ar[dd]_{(\id,0)}\ar[ddr]|f|\hole  \ar@/^1.2em/[dr]^{\Psi_M}
\\
&& \Ker(-\pi_f) \ar@{>->}[d]^{\kappa_{-\pi_f}}
\\
a \ar@{=}[dd]  \ar[r]^(.4){(\id,-f)}    & a\oplus b 
\ar[dd]^{0-\id} \ar[r]^{f+1}   &b \ar@{->>}[dd]^{-\pi_f =\alpha_1(M)}
\\
\\
a \ar@{>->}[r] & b \ar@{->>}[r]^(.4){\pi_f} & \Coker(f).
}
\end{equation}

Let us come to the actual proof. 
Assume that $\C$ is equipped with lax-colax algebra structures as in the
theorem. By Proposition~\ref{proposition:pre-abelian}, it then has
kernels and cokernels. We need to verify that item (i) implies that the
monomorphism  $f$ is a kernel. Since $f$ is assumed to be a
monomorphism, the upper right corner of~(\ref{Pozitri letim do
  Sevily.}) forces $\Psi_M$ to be a monomorphism as well, and hence,
by assumption, it is
actually an isomorphism. It follows that $f$ is a kernel of
$-\pi_f$. The proof that~(ii) implies that each epimorphism is a
cokernel is similar.

Let us prove the converse.
Each abelian category has lax and colax algebra
structures as in the theorem by Proposition~\ref{proposition:pre-abelian}.
Let us prove that~(i) is fulfilled for each monomorphism $f$. The
yoga of abelian categories teaches us that such an $f$ is a kernel of
its cokernel $\pi_f$, so it is also a kernel of $-\pi_f$. Thus
$\Psi_M$ in~(\ref{Pozitri letim do Sevily.}) must be an
isomorphism. Item~(ii) is handled similarly.   
\end{proof}

\appendix

\section{(Co)lax algebras and morphisms}
\label{Druhy den v Brne.}
 
We recall standard definitions of lax and colax algebras, their
morphisms, and related categories. Although in some places we repeat
definitions that have already been given in the main text, we believe
it is convenient to have all of them in one place. 
We restrict ourselves to the normalized
case, as this suffices for our purposes. Some of the material is taken
from~\cite{monads}.

\begin{definition}[Lax algebra]
\label{definition:lax_alg}
Let $(\T,\mu,\eta)$ be a $2$-monad on a $2$-category $\K$,
where \hbox{$\mu : \T^2 \Rightarrow \T$} and $\eta : \id_\K
\Rightarrow \T$ denote the multiplication and unit, respectively. 
A (normalized) lax $\T$-algebra structure on $A \in \K$ is an arrow 
$\alpha\colon \T A\to A$ together with a 2-cell
    \[\phi\colon \alpha \cdot\T \alpha
\Longrightarrow \alpha\cdot \mu_A\] 
as in
\[
  \xymatrix@R=2.5em@C=2.5em{
    \T^2 A \ar[r]^{\T \alpha} \ar[d]_{\mu_A} 
    & \T A \ar[d]^{\alpha} \ar@{=>}[dl]|{\phi\rule{.1em}{0pt}}
    \\
    \T A \ar[r]^{\alpha} & A
  }
\]
satisfying the following four conditions:

\begin{itemize}[topsep=0pt, partopsep=0pt, itemsep=0pt, itemindent=-1em]
\item[(i)]
$\alpha \cdot \eta_A=\uu_A$,
\item[(ii)] 
there is an equality of 2-cells
\[
\xymatrix@R=1.2em@C=1.7em{
\T^3 A \ar[rr]^{\T^2 \alpha}\ar[dd]_{\mu_{\T A}} && \T^2 A \ar[rd]^{\T \alpha} \ar[dd]_{\mu_A}
\\
&&& \T A \ar[dd]^{\alpha} \ar@{=>}[ld]|{\phi\rule{.1em}{0pt}}
\\
\T^2 A  \ar[rr]^{\T \alpha} \ar[rd]_{\mu_A} && \T A
\ar@{=>}[ld]|{\phi\rule{.1em}{0pt}}
\ar[dr]^{\alpha}
\\
&\T A \ar[rr]^{\alpha} && A
}
\hskip 1em
\raisebox{-3em}{$=$} \hskip 1em
\xymatrix@R=1.2em@C=1.7em{
\T^3 A \ar[rr]^{\T^2 \alpha}\ar[dd]_{\mu_{\T A}} \ar[rd]^(.6){\T\mu_A}   && 
\T^2 A \ar[rd]^{\T \alpha} \ar@{=>}[ld]|(.45){\T{\phi\rule{.1em}{0pt}}}
\\
&\T^2 A  \ar[rr]^{\T \alpha}   \ar[dd]_{\mu_A}&& \T A \ar[dd]^{\alpha} 
\ar@{=>}[lldd]|{\phi\rule{.1em}{0pt}}
\\
\T^2 A \ar[rd]_{\mu_A} &&
\\
&\T A \ar[rr]^{\alpha} && A,
}
\]
\item[(iii)]
$\phi \cdot \T\eta_A=\uu_{\alpha}$, and
\item[(iv)]
$\phi \cdot \eta_{\T A}=\uu_{\alpha}$.
\end{itemize}
\end{definition}

\begin{definition}[Colax algebra]
\label{definition:colax_alg}
%Let $(\T,\mu,\eta)$ be a $2$-monad over a $2$-category a category $\C$,
A (normalized) colax $\T$-algebra structure on $A\in \K$ is
an arrow
$\alpha\colon \T A\to A$ together with a 2-cell
\[\phi\colon \alpha\cdot \mu_A \Longrightarrow \alpha \cdot\T \alpha\]
as in
\[
\xymatrix@R=2.5em@C=2.5em{
\T^2 A \ar[r]^{\T \alpha} \ar[d]_{\mu_A} 
& \T A \ar[d]^{\alpha} \ar@{<=}[dl]|{\phi\rule{.1em}{0pt}}
\\
\T A \ar[r]^{\alpha} & A
}
\]
satisfying the following four conditions:
\begin{itemize}[topsep=0pt, partopsep=0pt, itemsep=0pt, itemindent=-1em]
\item[(i)]
$\alpha \cdot \eta_A=\uu_A$,
\item[(ii)] 
there is an equality of 2-cells
\[
\xymatrix@R=1.2em@C=1.7em{
\T^3 A \ar[rr]^{\T^2 \alpha}\ar[dd]_{\mu_{\T A}} && \T^2 A \ar[rd]^{\T \alpha} \ar[dd]_{\mu_A}
\\
&&& \T A \ar[dd]^{\alpha} \ar@{<=}[ld]|{\phi\rule{.1em}{0pt}}
\\
\T^2 A  \ar[rr]^{\T \alpha} \ar[rd]_{\mu_A} && \T A
\ar@{<=}[ld]|{\phi\rule{.1em}{0pt}}
\ar[dr]^{\alpha}
\\
&\T A \ar[rr]^{\alpha} && A
}
\hskip 1em
\raisebox{-3em}{$=$} \hskip 1em
\xymatrix@R=1.2em@C=1.7em{
\T^3 A \ar[rr]^{\T^2 {\alpha}}\ar[dd]_{\mu_{\T A}} \ar[rd]^(.6){\T\mu_A}   && 
\T^2 A \ar[rd]^{\T {\alpha}} \ar@{<=}[ld]|(.5){\T{\phi\rule{.1em}{0pt}}}
\\
&\T^2 A  \ar[rr]^{\T {\alpha}}   \ar[dd]_{\mu_A}&& \T A \ar[dd]^{\alpha} 
\ar@{<=}[lldd]|{\phi\rule{.1em}{0pt}}
\\
\T^2 A \ar[rd]_{\mu_A} &&
\\
&\T A \ar[rr]^{\alpha} && A,
}
\]
\item[(iii)]
$\phi \cdot \T\eta_A=\uu_{\alpha}$, and
\item[(iv)]
$\phi \cdot \eta_{\T A}=\uu_{\alpha}$.
\end{itemize}
\end{definition}

\begin{definition}[Lax morphism of lax algebras]
\label{definition:lax_alg_lax_morph}
Let $(A,\alpha_1,\phi_1)$ and $(B,\alpha_2,\phi_2)$ be lax
$\T$-algebras as in Definition~\ref{definition:lax_alg}. 
A lax morphism
\[
(f,\theta)\colon (A,\alpha_1,\phi_1)\to (B,\alpha_2,\phi_2)
\]
is a pair consisting of an arrow
$f\colon A\to B$ and a 2-cell
\[
\theta\colon \alpha_2\cdot \T f  \Rightarrow f\cdot \alpha_1
\]
as in
\[
\xymatrix@R=2.5em@C=2.5em{
\T A \ar[r]^{\T f} \ar[d]_{\alpha_1} 
& \T B\ar[d]^{\alpha_2} \ar@{=>}[dl]|{\theta\rule{.1em}{0pt}}
\\
A \ar[r]_{f} & B
}
\]
satisfying the following two conditions:
\begin{itemize}[topsep=0pt, partopsep=0pt, itemsep=0pt, itemindent=-1em]
\item[(i)] 
there is an equality of 2-cells
\[
\xymatrix@R=1.2em@C=2em{
\T^2 A \ar[rr]^{\T^2 f}\ar[dd]^{\mu_A}
&&\T^2  B \ar[dd]^{\mu_B} \ar[rd]^{\T\alpha_2}
\\
&&& \T  A  \ar[dd]^{\alpha_2}
\ar@{=>}[ld]|{\phi_2\rule{.1em}{0pt}}
\\
\T  A \ar[rr]^{\T f} \ar[rd]_{\alpha_1} && \T B
\ar@{=>}[ld]|{\theta\rule{.1em}{0pt}}
\ar[dr]^{\alpha_2}
\\
&A \ar[rr]^{f} && B
}
\hskip .5em
\raisebox{-3em}{$=$} \hskip .5em
\xymatrix@R=1.2em@C=2em{
\T^2  A \ar[rr]^{\T^2 f}\ar[dd]^{\mu_A} 
\ar[rd]^(.6){\T \alpha_1}   && 
\T B  \ar[rd]^{\T \alpha_2} \ar@{=>}[ld]|(.45){\T{\theta\rule{.1em}{0pt}}}
\\
&\T A  \ar[rr]^{\T f}   \ar[dd]_{\alpha_1}
\ar@{=>}[ld]|{\T\phi_1\rule{.1em}{0pt}}
&& \T B 
\ar[dd]^{\alpha_2} 
\ar@{=>}[lldd]|{\theta\rule{.1em}{0pt}}
\\
\T A \ar[rd]_{\alpha_1} &&
\\
& A \ar[rr]^{f} && B,
}
\]
\item[(ii)] 
$\theta\cdot\eta_A=\id_f$.
\end{itemize}
\end{definition}

\begin{definition}[Colax morphism of lax algebras]
\label{Zitra je Den otevreneho letiste.}
Let $(A,\alpha_1,\phi_1)$ and $(B,\alpha_2,\phi_2)$ be lax
$\T$-algebras. %A~(normalized)
A colax morphism
\[
(f,\theta)\colon (A,\alpha_1,\phi_1)\to (B,\alpha_2,\phi_2)
\] 
is a pair consisting of an arrow $f\colon A\to B$ and a 2-cell
\[
\theta\colon f\cdot \alpha_1 \Rightarrow \alpha_2\cdot \T f
\]
as in
\[
\xymatrix@R=2.5em@C=2.5em{
\T A \ar[r]^{\T f} \ar[d]_{\alpha_1} 
& \T B\ar[d]^{\alpha_2} \ar@{<=}[dl]|{\theta\rule{.1em}{0pt}}
\\
A \ar[r]_{f} & B
}
\]
satisfying the following two conditions:
\begin{itemize}[topsep=0pt, partopsep=0pt, itemsep=0pt, itemindent=-1em]
\item[(i)]
there is an equality of 2-cells
\[
\xymatrix@R=1.2em@C=1.7em{
\T^2 A \ar[rr]^{\T^2 f}\ar[dd]_{\T \alpha_1} && \T^2 B \ar@{<=}[lldd]|{\T\theta\rule{.1em}{0pt}}\ar[rd]^{\mu_B} \ar[dd]_{\T \alpha_2}
\\
&&& \T B \ar[dd]^{\alpha_2} \ar@{<=}[ld]|{\phi_2\rule{.1em}{0pt}}
\\
\T A  \ar[rr]^{\T f} \ar[rd]_{\alpha_1} && \T B
\ar@{<=}[ld]|{\theta\rule{.1em}{0pt}}
\ar[dr]^{\alpha_2}
\\
&A \ar[rr]^f && B
}
\hskip 1em
\raisebox{-3em}{$=$} \hskip 1em
\xymatrix@R=1.2em@C=1.7em{
\T^2 A \ar[rr]^{\T^2 f}\ar[dd]_{{\T \alpha_1}} \ar[rd]^(.6){\mu_A}   && 
\T^2 B \ar[rd]^{\mu_B} 
\\
&\T A \ar@{<=}[ld]|{\phi_1\rule{.1em}{0pt}} \ar[rr]^{\T f}   \ar[dd]_{\alpha_1}&& \T B \ar[dd]^{\alpha_2} 
\ar@{<=}[lldd]|{\theta\rule{.1em}{0pt}}
\\
\T A \ar[rd]_{\alpha_1} &&
\\
&A \ar[rr]^f && B,
}
\]
\item[(ii)] 
$\theta\cdot\eta_A=\id_f$.
\end{itemize} 
\end{definition}

\begin{definition}[Lax morphism of colax algebras]
\label{definition:lax_morph_of_colax_alg}
Let $(A,\alpha_1,\phi_1)$ and $(B,\alpha_2,\phi_2)$ be 
colax $\T$-al\-gebras as in Definition~\ref{definition:colax_alg}. 
%A~(normalized) 
A lax morphism 
\[
(f,\theta)\colon (A,\alpha_1,\phi_1)\to (B,\alpha_2,\phi_2)
\]
a pair consisting of an arrow 
$f\colon A\to B$ and a 2-cell 
\[
\theta\colon \alpha_2\cdot \T f  \Rightarrow f\cdot \alpha_1
\]
as in
\begin{equation}
\xymatrix@R=2.5em@C=2.5em{
\T A \ar[r]^{\T f} \ar[d]_{\alpha_1} 
& \T B\ar[d]^{\alpha_2} \ar@{=>}[dl]|{\theta\rule{.1em}{0pt}}
\\
A \ar[r]_{f} & B
}
\end{equation}satisfying the two following conditions:
\begin{itemize}[topsep=0pt, partopsep=0pt, itemsep=0pt, itemindent=-1em]
\item[(i)]
there is an equality of 2-cells
\[
\xymatrix@R=1.2em@C=1.7em{
\T^2 A \ar[rr]^{\T^2 f}\ar[dd]_{\T \alpha_1} && \T^2 B \ar@{=>}[lldd]|{\T\theta\rule{.1em}{0pt}}\ar[rd]^{\mu_B} \ar[dd]_{\T \alpha_2}
\\
&&& \T B \ar[dd]^{\alpha_2} \ar@{=>}[ld]|{\phi_2\rule{.1em}{0pt}}
\\
\T A  \ar[rr]^{\T f} \ar[rd]_{\alpha_1} && \T B
\ar@{=>}[ld]|{\theta\rule{.1em}{0pt}}
\ar[dr]^{\alpha_2}
\\
&A \ar[rr]^f && B
}
\hskip 1em
\raisebox{-3em}{$=$} \hskip 1em
\xymatrix@R=1.2em@C=1.7em{
\T^2 A \ar[rr]^{\T^2 f}\ar[dd]_{{\T \alpha_1}} \ar[rd]^(.6){\mu_A}   && 
\T^2 B \ar[rd]^{\mu_B} 
\\
&\T A \ar@{=>}[ld]|{\phi_1\rule{.1em}{0pt}} \ar[rr]^{\T f}   \ar[dd]_{\alpha_1}&& \T B \ar[dd]^{\alpha_2} 
\ar@{=>}[lldd]|{\theta\rule{.1em}{0pt}}
\\
\T A \ar[rd]_{\alpha_1} &&
\\
&A \ar[rr]^f && B,
}
\]
\item[(ii)] $\theta\cdot\eta_A=\id_f$.
\end{itemize} 
\end{definition}

\begin{definition}\label{definition:2-cell}
A 2-cell between two lax $\T$-algebra morphisms $(f,\theta)\Rightarrow
(g,\delta)$ is a 2-cell $\rho \colon f\Rightarrow g$, such that the
following two composites of 2-cells equal:
\begin{equation}
    \label{equation:2-cell}
\xymatrix@R=.2em{
&&
\\
\T A  \ar[ddddddd]_{\alpha_1} 
\ar@/^1em/[rr]^{\T g}  \ar@/_1.2em/[rr]_{\T f} && 
\T B  \ar[ddddddd]^{\alpha_2} 
\\
& \ar@{=>}[uu]_{\T\rho}
\\
\\
\\
\\
\\
\\
A \ar[rr]^f \ar@{=>}[rruuuuuuu]|\hole|\theta  && B
}
\raisebox{-4.5em}{\ $=$}
\raisebox{-1em}{
\xymatrix@R=.2em{
\T A  \ar[ddddddd]_{\alpha_1}  
\ar[rr]^{\T g}  &&  
\T B  \ar[ddddddd]^{\alpha_2} 
\\
\\
\\
\\
\\
\\ &&
\\
A  \ar@{=>}[rruuuuuuu]|\hole|\delta
\ar@/^1em/[rr]^{g}  \ar@/_1.2em/[rr]_{f} && 
B  .
\\
& \ar@{=>}[uu]_{\rho}
}
}
\end{equation}
\end{definition}

\begin{definition}\label{definition:Lax-T-alg_lax}
    The 2-category $\tt{Lax}\hbox{-}\T\tt{\hbox{-}alg}_{\tt{lax}}$ 
has
\begin{itemize}[topsep=0pt, partopsep=0pt, itemsep=0pt, itemindent=-2em]
        \item lax $\T$-algebras of Definition~\ref{definition:lax_alg}
          as its objects,
        \item lax algebra morphisms of
          Definition~\ref{definition:lax_alg_lax_morph} as its arrows, and
        \item 
    $2$-cells of Definition~\ref{definition:2-cell} as its $2$-cells. 
    \end{itemize}
\end{definition}

\begin{definition}
    The 2-category $\tt{Colax}\hbox{-}\T\tt{\hbox{-}alg}_{\tt{lax}}$ 
has
    \begin{itemize}[topsep=0pt, partopsep=0pt, itemsep=0pt, itemindent=-2em]
        \item colax $\T$-algebras of Definition~\ref{definition:colax_alg}
          as its objects,
        \item lax algebra morphisms of
          Definition~\ref{definition:lax_morph_of_colax_alg} as its arrows, and
        \item 
    $2$-cells of Definition~\ref{definition:2-cell} as its $2$-cells. 
    \end{itemize}
\end{definition}

\section{Proof of Theorem~\ref{Ozivil jsem Jarcino auto.}}
\label{Ta nova strecha muj program tohoto tydne velmi zahustuje.}

\def\OOO{A_{000}}      \def\OAO{f_{0?0}}  \def\BB{\phi_B}
\def\OIO{A_{010}}      \def\OIA{f_{01?}}  \def\EE{\phi_E}
\def\OII{A_{011}}      \def\AII{f_{?11}}  \def\SS{\phi_S}
\def\IIO{A_{110}}      \def\AOO{f_{?00}}  \def\NN{\phi_N}
\def\III{A_{111}}      \def\IAI{f_{1?1}}  \def\WW{\phi_W}
\def\IOO{A_{100}}      \def\IOA{f_{10?}}  \def\FF{\phi_F}
\def\IOI{A_{101}}      \def\IIA{f_{11?}}
\def\OOI{A_{001}}      \def\AIO{f_{?10}}
\def\IAO{f_{1?0}}
\def\OOA{f_{00?}}
\def\AOI{f_{?01}} 
\def\OAI{f_{0?1}} 
This part of the Appendix is devoted to the proof of 
Theorem~\ref{Ozivil jsem Jarcino auto.}. 
We begin with the most difficult part, namely the verification
of equality (ii) of Definition~\ref{definition:lax_alg}.
It has the form
\begin{subequations}
\begin{equation}
\label{Mam rozbite kolecko.}
  \xymatrix@R=1.2em@C=2em{
    \OOO \ar[rr]^{\OAO}\ar[dd]_{\AOO}
    &&\OIO \ar[dd]^{\AIO} \ar[rd]^{\OIA}
    \ar@{=>}[lldd]|{\BB}
    \\
    &&& \OII  \ar[dd]^{\AII}
    \ar@{=>}[ld]|{\EE\rule{.1em}{0pt}}
    \\
    \IOO\ar[rr]^{\IAO} \ar[rd]_{\IOA} && \IIO
    \ar@{=>}[ld]|{\SS\rule{.1em}{0pt}}
    \ar[dr]^{\IIA}
    \\
    &\IOI \ar[rr]^{\IAI} && \III
  }
  \hskip .5em
  \raisebox{-3em}{$=$} \hskip .5em
  \xymatrix@R=1.2em@C=2em{
    \OOO \ar[rr]^{\OAO}\ar[dd]_{\AOO} 
    \ar[rd]^(.6){\OOA}   && 
    \OIO \ar[rd]^{\OIA} \ar@{=>}[ld]|\NN
    \\
    &\OOI \ar[rr]^{\OAI}   \ar[dd]_{\AOI}
    \ar@{=>}[ld]|{\WW\rule{.1em}{0pt}}
    && \OII 
    \ar[dd]^{\AII} 
    \ar@{=>}[lldd]|{\FF\rule{.1em}{0pt}}
    \\
    \IOO\ar[rd]_{\IOA} &&
    \\
    &\IOI \ar[rr]^{\IAI} && \III.
  }
\end{equation}
which is a diagrammatic expression of the equality
\begin{equation}
\label{Uvidim jestli se Vojta ozve.}
\SS \cdot \AOO \ \Box \ \IIA \cdot \BB \ \Box \ \EE  \cdot  \OAO   
=
\IAI \cdot \WW \ \Box \ \FF \cdot \OOA \ \Box \ \AII \cdot \NN
\end{equation}
of the vertical composites of $2$-cells. The arrows
featured above decorate the vertices and oriented edges of the
$3$-dimensional cube
\begin{equation}
\label{Pokud ne, zavolam mu ve ctvrtek.}
\xymatrix@R=1.3em@C=1.3em{
\OOO \ar[rrr]^(.5)\OAO \ar[ddd]_{\AOO} \ar[rd]^(.6)\OOA &&& 
\OIO  \ar[ddd]_(.6){\AIO}|(.37)\hole  \ar[rd]^\OIA
\\
& \OOI \ar[rrr]^(.4)\OAI  \ar[ddd]_(.4)\AOI   &&& \OII  \ar[ddd]_(.4)\AII
\\
\\
\IOO  \ar[rrr]^(.6)\IAI|(.39)\hole \ar[rd]^(.6)\IOA &&& \IIO \ar[rd]^\IIA 
\\
& \IOI \ar[rrr]^\IAI &&& \III
}
\end{equation}
\end{subequations}
whose faces are decorated by the $2$-cells as in~(\ref{Mam rozbite
  kolecko.}).  We say that the decorated cube in~(\ref{Pokud ne,
  zavolam mu ve ctvrtek.}) {\/\em commutes\/} if~(\ref{Uvidim
  jestli se Vojta ozve.}) holds. We call the cube decorated
as in~(\ref{Kdy bude zas letadelko v poradku?}) the {\/\em coherence
  cube\/} for $\alpha_i$, $i=1,2$.

The proof of (ii) will rely on the obvious
fact that a big cube composed of smaller commutative cubes with matching
decorations of the adjacent faces commutes too. In our case we consider
the `big cube' portrayed in Figure~\ref{Jak bude v sobotu?},
\begin{figure}
\centering
\xymatrix@C=1em{
{{\ttt33}A} \ar[rrr]^{{\ttt32} \alpha_1} \ar[ddd]_{\TT23 \mu_1 \T_1}
\ar[rd]^(.6){{\ttt31} \mu_1}
&&& {{\ttt32}A}  \ar[rrr]^{{\ttt22} \alpha_2} 
\ar@{--}[dddddd]|(.33){\rule{0em}{2.5em}}|(.82){\rule{0em}{1.5em}}
%|(.2){A\rule{0em}{1.5em}}   
\ar@{--}[rrdd]   &&&  
{{\ttt22}A} \ar[ddd]^{\TT22 \mu_1}|(.67)\hole \ar[rd]^{{\ttt21} \alpha_1}
\\
&\ar@{--}[rrrrrr] \ar@{--}[dddddd] 
{{\ttt32} A} \ar[rd]^{{\ttt12} \mu_2}    &&&%\ar@{--}[dddddd] %NOVE PRIDANA
&&& 
{{\ttt21} A} \ar[rd]^{{\ttt11} \alpha_2} \ar@{--}[dddddd] 
\\
&& 
{{\ttt22} A}  \ar[rrr]^{{\ttt21} \alpha_1}  \ar[ddd]^{{\ttt20} \mu_1}
&&&
\ar@{--}[dddddd]
{{\ttt21}A} \ar[rrr]^{{\ttt11} \alpha_2}   &&& {{\ttt11} A}
\ar[ddd]^{{\ttt10} \alpha_1}
\\
\ar@{--}[rrrrrr]|(.37){\rule{2em}{0em}}|(.82){\rule{2em}{0em}} \ar@{--}[rrdd]
{{\ttt32} A}  \ar[ddd]_{{\ttt02} \mu_2 {\ttt10}} &&&&&& {{\ttt21} A}
\ar[ddd]^{{\ttt01}\mu_2}
\ar@{--}[rrdd]
\\
& %\ar@{--}[rrrrrr]&&&&&& %NOVE PRIDANA TECKOVANA
\\
&&{{\ttt21} A} \ar[ddd]^{{\ttt01}\mu_2} \ar@{--}[rrrrrr] 
&&&&&& {{\ttt10}A} \ar[ddd]^{\alpha_2}
\\
{{\ttt22}A}   \ar[rrr]^{{\ttt21} \alpha_1}|(.67)\hole \ar[rd]_{{\ttt20} \mu_1}   &&& \ar@{--}[rrdd]
{{\ttt21} A} \ar[rrr]^(.35){{\ttt11} \alpha_2} &&& {{\ttt11} A} \ar[rd]^{{\ttt10} \alpha_1}
\\
& \ar@{--}[rrrrrr]
{{\ttt21} A} \ar[rd]_{{\ttt01}\mu_2} &&&&&& {{\ttt10} A} \ar[rd]^{\alpha_2}
\\
&& {{\ttt11}A} \ar[rrr]^{{\ttt10} \alpha_1}  &&&{{\ttt10} 
A} \ar[rrr]^{\alpha_2}&&& {A} 
}
\caption{The big cube.\label{Jak bude v sobotu?}}
\end{figure}
which is the coherence cube for the composed
$\T_{21}$-algebra~(\ref{Dnes divadlo vynecha.}) with the associativity
constraint $\phi_{21}$ introduced in \eqref{Martin prijel do Brna}. In
the remainder of this part of the proof
we list and comment on the eight cubes that constitute it an explain why they
commute. 

Since we assume that the distributivity law is invertible, we will
simplify the notation by making no distinction between the composites 
$\T_1^m\T_2^n$ and $\T_2^n\T_1^m$, $m,n \geq 0$, and moving the
iterated~$\T_2$'s to
the left wherever it makes sense. We will also assume the
simplified identities~\eqref{Jdu si koupit podlozky.}.

\vskip .5em
\noindent 
A. The upper left back cube is the image of the coherence
cube~\eqref{Kdy bude zas letadelko v poradku?} 
for $\alpha_1$ under~$\TT23$, thus it commutes.

\vskip .5em
\noindent 
B. The upper right back cube is the image
of the cube represented in~\eqref{B}  under $\ttt20$, thus it commutes
by assumption.

\vskip .5em
\noindent 
C. The upper front left cube equals $\ttt10$ applied to the cube
represented by  
\def\OOO{{\ttt22}A}      \def\OAO{\TT22\TT1{}\alpha_1}  
\def\BB{{\ttt20}\phi{_1}}
\def\OIO{\TT22\TT1{}A}      \def\OIA{{\ttt01}\mu_2}  \def\EE{\phi_E}
\def\OII{\TT2{}\TT1{}A}      \def\AII{{\ttt10}\alpha_1}  \def\SS{\phi_S}
\def\IIO{{\ttt20}A}           \def\AOO{{\ttt20}\mu{_1}}  \def\NN{}
\def\III{{\ttt10}A}         \def\IAI{{\ttt10}\alpha_1}  \def\WW{\phi_W}
\def\IOO{\TT22\TT1{}A}      \def\IOA{{\ttt01}\mu{_2}}  \def\FF{{\ttt10}\phi{_1}}
\def\IOI{\TT2{}\TT1{}A}      \def\IIA{\mu{_2}}
\def\OOI{{\ttt12}A}      \def\AIO{{\ttt20}\alpha{_1}}
\def\IAO{{\ttt20}\alpha{_1}}
\def\OOA{{\ttt02}\mu{_2}}
\def\AOI{{\ttt10}\mu{_1}} 
\def\OAI{{\ttt11} \alpha{_1}} 
\[
  \xymatrix@R=1.2em@C=1.7em{
    \OOO \ar[rr]^{\OAO}\ar[dd]_{\AOO}
    &&\OIO \ar[dd]^{\AIO} \ar[rd]^{\OIA}
    \ar@{=>}[lldd]|{\BB}
    \\
    &&& \OII  \ar[dd]^{\AII}
    %\ar@{=>}[ld]|{\EE\rule{.1em}{0pt}}
    \\
    \IOO\ar[rr]^{\IAO} \ar[rd]_{\IOA} && \IIO
    %\ar@{=>}[ld]|{\SS\rule{.1em}{0pt}}
    \ar[dr]^{\IIA}
    \\
    &\IOI \ar[rr]^{\IAI} && \III
  }
  \hskip-.3em\raisebox{-3em}{$=$} \hskip .1em
  \xymatrix@R=1.2em@C=1.7em{
    \OOO \ar[rr]^{\OAO}\ar[dd]_{\AOO} 
    \ar[rd]^(.6){\OOA}   && 
    \OIO \ar[rd]^{\OIA} %\ar@{=>}[ld]|\NN
    \\
    &\OOI \ar[rr]^{\OAI}   \ar[dd]_{\AOI}
    %\ar@{=>}[ld]|{\WW\rule{.1em}{0pt}}
    && \OII 
    \ar[dd]^{\AII} 
    \ar@{=>}[lldd]|{\FF\rule{.1em}{0pt}}
    \\
    \IOO\ar[rd]_{\IOA} &&
    \\
    &\IOI \ar[rr]^{\IAI} && \III.
  }
\]
Its commutativity follows from the $2$-naturality of $\mu_2$.

\vskip .5em
\noindent 
D. The upper front right cube is the image of the cube represented
in~\eqref{D} under $\ttt10$, hence it commutes by assumption. 

\vskip .5em
\noindent 
E. The commutativity of the bottom back left cube
means the equality
%VRCHOLY
\def\OOO{{\ttt32}A}      
\def\OIO{{\ttt31}A}     
\def\OII{{\ttt30}A}      
\def\III{{\ttt20}A}        
\def\IOO{\TT22\TT12A}      
\def\IOI{{\ttt21}A}    
\def\IIO{{\ttt21}A}         
\def\OOI{{\ttt31}A}     
%HRANY
\def\OAO{{\ttt31}\alpha_1}  
\def\OIA{{\ttt30}\alpha_1}  
\def\AII{\mu{_2}{\ttt10}} 
\def\AOO{{\ttt02}\mu{_2}{\ttt10}}  
\def\IOA{{\ttt20}\mu{_1}} 
\def\IAI{{\ttt20}\alpha_1}  
\def\IAO{{\ttt21}\alpha{_1}}
\def\IIA{{\ttt20}\alpha{_1}}
\def\AIO{{\ttt01}\mu{_2}{\ttt10}}
\def\OAI{{\ttt30} \alpha{_1}} 
\def\AOI{{\ttt01}\mu{_2}{\ttt10}} 
\def\OOA{{\ttt30}\mu{_1}}
%STENY
\def\BB{{\ttt20}(\phi)}
\def\NN{{\ttt30}\phi{_1}\hskip-.4em}
\def\WW{\phi_W}
\def\FF{\TT1{}(\psi)}
\def\SS{{\ttt20}\phi_1 \hskip-.4em}
\def\EE{\psi}
\[
  \xymatrix@R=1.2em@C=1.7em{
    \OOO \ar[rr]^{\OAO}\ar[dd]_{\AOO}
    &&\OIO \ar[dd]^{\AIO} \ar[rd]^{\OIA}
    %\ar@{=>}[lldd]|{\BB}
    \\
    &&& \OII  \ar[dd]^{\AII}
   % \ar@{=>}[ld]|{\EE\rule{.1em}{0pt}}
    \\
    \IOO\ar[rr]^{\IAO} \ar[rd]_{\IOA} && \IIO
    \ar@{=>}[ld]|{\SS\rule{.1em}{0pt}}
    \ar[dr]^{\IIA}
    \\
    &\IOI \ar[rr]^{\IAI} && \III
  }
  \hskip-.7em 
  \raisebox{-3em}{$=$} \hskip .1em
  \xymatrix@R=1.2em@C=1.7em{
    \OOO \ar[rr]^{\OAO}\ar[dd]_{\AOO} 
    \ar[rd]^(.6){\OOA}   && 
    \OIO \ar[rd]^{\OIA} 
\ar@{=>}[ld]|\NN
    \\
    &\OOI \ar[rr]^{\OAI}   \ar[dd]_{\AOI}
    %\ar@{=>}[ld]|{\WW\rule{.1em}{0pt}}
    && \OII 
    \ar[dd]^{\AII} 
  %  \ar@{=>}[lldd]|{\FF\rule{.1em}{0pt}}
    \\
    \IOO\ar[rd]_{\IOA} &&
    \\
    &\IOI \ar[rr]^{\IAI} && \III
  }
\]
which follows from the $2$-naturality of $\mu_2$. 

\vskip .5em
\noindent 
F. The commutativity of the bottom back right cube
means the equality
%VRCHOLY
\def\OOO{{\ttt31}A}      
\def\OIO{{\ttt21}A}     
\def\OII{{\ttt20}A}      
\def\III{{\ttt10}A}        
\def\IOO{{\ttt21}A}      
\def\IOI{{\ttt20}A}    
\def\IIO{{\ttt11}A}         
\def\OOI{{\ttt30}A}     
%HRANY
\def\OAO{{\ttt21}\alpha_2}  
\def\OIA{{\ttt20}\alpha_1}  
\def\AII{\mu{_2}} 
\def\AOO{{\ttt01}\mu{_2}{\ttt10}}  
\def\IOA{{\ttt20}\alpha{_1}} 
\def\IAI{{\ttt10}\alpha_2}  
\def\IAO{{\ttt11}\alpha{_2}}
\def\IIA{{\ttt10}\alpha{_1}}
\def\AIO{{\ttt01}\mu{_2}}
\def\OAI{{\ttt20} \alpha{_2}} 
\def\AOI{\mu{_2}{\ttt10}} 
\def\OOA{{\ttt30}\alpha{_1}}
%STENY
\def\BB{}
\def\NN{{\ttt02}\psi}
\def\WW{\phi_W}
\def\SS{{\ttt10}\psi}
\def\EE{\psi}
\[
  \xymatrix@R=1.2em@C=1.7em{
    \OOO \ar[rr]^{\OAO}\ar[dd]_{\AOO}
    &&\OIO \ar[dd]^{\AIO} \ar[rd]^{\OIA}
    %\ar@{=>}[lldd]|{\BB}
    \\
    &&& \OII  \ar[dd]^{\AII}
   % \ar@{=>}[ld]|{\EE\rule{.1em}{0pt}}
    \\
    \IOO\ar[rr]^{\IAO} \ar[rd]_{\IOA} && \IIO
    \ar@{=>}[ld]|{\SS\rule{.1em}{0pt}}
    \ar[dr]^{\IIA}
    \\
    &\IOI \ar[rr]^{\IAI} && \III
  }
  \hskip .1em
  \raisebox{-3em}{$=$} \hskip .1em
  \xymatrix@R=1.2em@C=1.7em{
    \OOO \ar[rr]^{\OAO}\ar[dd]_{\AOO} 
    \ar[rd]^(.6){\OOA}   && 
    \OIO \ar[rd]^{\OIA} 
\ar@{=>}[ld]|\NN
    \\
    &\OOI \ar[rr]^{\OAI}   \ar[dd]_{\AOI}
    %\ar@{=>}[ld]|{\WW\rule{.1em}{0pt}}
    && \OII 
    \ar[dd]^{\AII} 
  %  \ar@{=>}[lldd]|{\FF\rule{.1em}{0pt}}
    \\
    \IOO\ar[rd]_{\IOA} &&
    \\
    &\IOI \ar[rr]^{\IAI} && \III
  }
\]
which holds by the argument used in E.

\vskip .5em
\noindent 
G. The commutativity of the bottom front left cube
means the equality
%VRCHOLY
\def\OOO{{\ttt31}A}      
\def\OIO{{\ttt30}A}     
\def\OII{{\ttt20}A}      
\def\III{{\ttt10}A}        
\def\IOO{{\ttt21}A}      
\def\IOI{{\ttt11}A}    
\def\IIO{{\ttt20}A}         
\def\OOI{{\ttt21}A}     
%HRANY
\def\OAO{{\ttt30}\alpha_1}  
\def\OIA{{\ttt10}\mu_2}  
\def\AII{\mu{_2}} 
\def\AOO{{\ttt01}\mu{_2}{\ttt10}}  
\def\IOA{{\ttt01}\mu{_2}} 
\def\IAI{{\ttt10}\alpha_1}  
\def\IAO{{\ttt20}\alpha_1}  
\def\IIA{\mu{_2}}
\def\AIO{\mu{_2}{\ttt10}}
\def\OAI{{\ttt20} \alpha{_1}} 
\def\AOI{{\ttt11}\mu{_2}} 
\def\OOA{{\ttt11}\mu{_2}}
%STENY
\def\BB{}
\def\NN{??????}
\def\WW{\phi_W}
\def\FF{\TT1{}(\psi)}
\def\SS{???}
\def\EE{\psi}
\[
  \xymatrix@R=1.2em@C=1.7em{
    \OOO \ar[rr]^{\OAO}\ar[dd]_{\AOO}
    &&\OIO \ar[dd]^{\AIO} \ar[rd]^{\OIA}
    %\ar@{=>}[lldd]|{\BB}
    \\
    &&& \OII  \ar[dd]^{\AII}
   % \ar@{=>}[ld]|{\EE\rule{.1em}{0pt}}
    \\
    \IOO\ar[rr]^{\IAO} \ar[rd]_{\IOA} && \IIO
  %  \ar@{=>}[ld]|{\SS\rule{.1em}{0pt}}
    \ar[dr]^{\IIA}
    \\
    &\IOI \ar[rr]^{\IAI} && \III
  }
  \hskip -.1em
  \raisebox{-3em}{$=$} \hskip .1em
  \xymatrix@R=1.2em@C=1.7em{
    \OOO \ar[rr]^{\OAO}\ar[dd]_{\AOO} 
    \ar[rd]^(.6){\OOA}   && 
    \OIO \ar[rd]^{\OIA} 
%\ar@{=>}[ld]|\NN
    \\
    &\OOI \ar[rr]^{\OAI}   \ar[dd]_{\AOI}
    %\ar@{=>}[ld]|{\WW\rule{.1em}{0pt}}
    && \OII 
    \ar[dd]^{\AII} 
  %  \ar@{=>}[lldd]|{\FF\rule{.1em}{0pt}}
    \\
    \IOO\ar[rd]_{\IOA} &&
    \\
    &\IOI \ar[rr]^{\IAI} && \III
  }
\]
which is obvious, since all faces commute by naturality.

\vskip .5em
\noindent 
H. Finally, the bottom right cube is exactly the coherence cube for
$\alpha_2$, hence it commutes. 

This establishes the  equality in (ii) of Definition~\ref{definition:lax_alg}.
We still need to check the equalities in (i), (iii) and (iv). 
The verification of (i) is a straightforward use of definitions and
axioms, and we leave it to the reader. Let us turn to~(iii).

Recall that the unit for the composite 
monad is $\eta_{21}=\T_2\eta_1\cdot \eta_2$. The 2-cell 
$\phi_{21}\cdot\T_{21}\eta_{21}$ appearing in (ii) 
is the vertical composite of the following three 2-cells,
 each of which is in fact the identity 2-cell, to wit:
\begin{itemize}[topsep=0pt, partopsep=0pt, itemsep=0pt, itemindent=-1em]
\item[$\bullet$] 
$\phi_2\cdot\T_2^2\alpha_1\cdot\T_2^2\mu_1\cdot\T_2^2\T_1\eta_1\cdot\T_2\T_1\eta_2
=\phi_2\cdot\T_2^2\alpha_1\cdot\T_2\T_1\eta_2=
\phi_2\cdot\T_2\eta_2\cdot\T_2\alpha_1=\id,$
\item[$\bullet$] 
$\alpha_2\cdot\T_2\alpha_2\cdot\T_2^2\phi_1\cdot\T_2^2\T_1\eta_1\cdot\T_2\T_1\eta_2
=\alpha_2\cdot\T_2\alpha_2\cdot\id\cdot\T_2\T_1\eta_2=\id,$ and
\item[$\bullet$] 
$\alpha_2\cdot\T_2\psi\cdot\T_2^2\T_1\alpha_1\cdot\T_2^2\T_1\eta_1\cdot\T_2\T_1\eta_2=\alpha_2\cdot\T_2\psi\cdot\T_2\T_1\eta_2=\id.$ 
\end{itemize}
The proof of the equality $\phi_{21}\cdot\eta_{21}\T_{21}=\id_{\alpha}$ in
(iv) is similar.

\section{Lifted monads}

In this part of the Appendix we prove the existence of the
monads $\widetilde{\T}_i$, $i=1,2$, featured in
Proposition~\ref{proposition:lax_lax_characterization}.
\begin{proposition}
\label{proposition:lifted_monad}
The functor
$\widetilde{\T}_2 = (\T_2 A,\T_2\alpha_1\cdot\Lambda,
\T_2\phi_1\cdot\Lambda)$ defined in~\eqref{equation:lifted_T_2_lax_lax}
extends to a 2-monad on the 2-category
$\tt{Lax}\hbox{-}\T_{\!1}\tt{\hbox{-}alg}_{\tt{lax}}$ of Definition~\ref{definition:Lax-T-alg_lax}. The multiplication
has components
\[
\widetilde{\mu}_2{(A,{\alpha_1},\phi)}=(\mu_2(A),\id)\colon
\widetilde{\T}^2_2(A,{\alpha_1},\phi_1)\longrightarrow
\widetilde{\T}_2(A,{\alpha_1},\phi_1),
\]
and the unit has components
\[
\widetilde{\eta}_2{(A,{\alpha_1},\phi_1)}=(\eta_2(A),\id)\colon
(A,{\alpha_1},\phi_1) \longrightarrow \widetilde{\T}_2(A,{\alpha_1},\phi_1).
\]
An obvious version of the proposition holds also for $\widetilde \T_1$
in place of\/ $\widetilde \T_2$, and colax instead of lax morphisms.
\end{proposition}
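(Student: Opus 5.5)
The plan is to check, in turn, the four ingredients of a $2$-monad on the $2$-category ${\tt Lax}\hbox{-}\T_{\!1}{\tt\hbox{-}alg}_{\tt lax}$. These are the action on objects, on $1$-cells, and on $2$-cells; the fact that $\widetilde\mu_2$ and $\widetilde\eta_2$ are lax morphisms; their $2$-naturality; and the monad laws. Throughout I suppress $\Lambda$ and use the simplified identities~\eqref{Jdu si koupit podlozky.}, namely $\mu_1\T_2=\T_2\mu_1$, $\T_1\mu_2=\mu_2\T_1$, and so on.

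First, $\widetilde\T_2(A,\alpha_1,\phi_1)=(\T_2A,\T_2\alpha_1,\T_2\phi_1)$ is a lax $\T_1$-algebra. The coherence cube~\eqref{Kdy bude zas letadelko v poradku?} for it is simply $\T_2$ applied to the coherence cube of $\alpha_1$, once $\T_2\T_1^k$ is identified with $\T_1^k\T_2$ and $\mu_1\T_2=\T_2\mu_1$. The unit conditions follow the same way from $\T_2\eta_1=\eta_1\T_2$, and this works because $\T_2$ is a strict $2$-functor.

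Next, on a lax morphism $(f,\theta)$ I set $\widetilde\T_2(f,\theta):=(\T_2f,\T_2\theta)$, and on a $2$-cell $\rho$ I set $\widetilde\T_2\rho:=\T_2\rho$. The axioms (i), (ii) of Definition~\ref{definition:lax_alg_lax_morph} and condition~\eqref{equation:2-cell} are again $\T_2$-images of the corresponding axioms. Strict $2$-functoriality of $\widetilde\T_2$ is then inherited from that of $\T_2$.

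For $\widetilde\mu_2=(\mu_2,\id)$ to be a lax morphism $\widetilde\T_2^2(A)\to\widetilde\T_2(A)$, the underlying square must commute strictly:
\[
\T_2\alpha_1\cdot\T_1\mu_2=\mu_2\cdot\T_2^2\alpha_1 .
\]
This follows from $\T_1\mu_2=\mu_2\T_1$ together with naturality of $\mu_2$. With identity $2$-cell, axiom (i) reduces to $\T_2\phi_1\cdot\T_1\mu_2$-whiskering versus $\mu_2\cdot\T_2^2\phi_1$, which agree by $2$-naturality of $\mu_2$. Axiom (ii) is trivial. The unit $\widetilde\eta_2=(\eta_2,\id)$ is treated in exactly the same way, using $\T_1\eta_2=\eta_2\T_1$.

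Then $2$-naturality of $\widetilde\mu_2$ and $\widetilde\eta_2$ with respect to lax morphisms $(f,\theta)$ has two parts. The first is the equality of underlying arrows, which is naturality of $\mu_2$ and $\eta_2$. The second is equality of the composite $2$-cells, namely $\id\cdot\T_2^2\theta$ versus $\T_2\theta\cdot\id$ after whiskering with $\mu_2$; this is again $2$-naturality of $\mu_2$. The monad laws hold on underlying arrows because $\T_2$ is a monad, and on $2$-cells because all of them are identities. Since composition of lax morphisms composes $2$-cells by pasting, identities paste to identities.

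The main obstacle I expect is purely bookkeeping: making sure that the suppressed $\Lambda$'s line up in the whiskered $2$-cells. In particular, $\T_2\phi_1\cdot\Lambda$ must be a $2$-cell with the correct source $\T_2\alpha_1\cdot\Lambda\cdot\T_1(\T_2\alpha_1\cdot\Lambda)$. Verifying this requires the distributive law axioms~\eqref{Jarka se vratila z Kolumbie.} to move $\Lambda$ past $\T_1\T_2\alpha_1$, and that move is exactly naturality of $\lambda$. The version for $\widetilde\T_1$ on lax $\T_2$-algebras with colax morphisms is formally identical: use $\lambda^{-1}$ in place of $\lambda$, and reverse the direction of the morphism $2$-cells.
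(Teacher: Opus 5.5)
Your proposal is correct and follows essentially the same route as the paper. In both, the substantive step is that $\widetilde{\mu}_2=(\mu_2,\id)$ (and likewise $\widetilde{\eta}_2$) is a lax $\T_1$-morphism, which reduces, via the distributive-law identity $\T_1\mu_2=\mu_2\T_1$ and $2$-naturality of $\mu_2$ applied to $\phi_1$, to an equality of whiskered $2$-cells (the whiskering is by $\T_1^2\mu_2$, not $\T_1\mu_2$ as you wrote), with the monad laws inherited from $\T_2$; you additionally spell out routine points the paper leaves implicit, such as well-definedness of $\widetilde{\T}_2$ on objects and $1$-cells and $2$-naturality of $\widetilde{\mu}_2,\widetilde{\eta}_2$.
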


\begin{proof} The concrete form of the isomorphism $\Lambda$ in formula~\eqref{equation:lifted_T_2_lax_lax} plays a role, so we rewrite the formula
 more explicitly.
The 2-functor $\widetilde{\T}_2$ on lax $\T_1$ algebras, their lax morphisms and algebra 2-cells
is given as
\begin{align*}
\widetilde{\T}_2(A,
  \alpha_1,\phi_1)&:=(\T_2 A,\T_2\alpha_1\cdot\lambda,
  \T_2\phi_1\cdot\T_2\lambda\cdot\lambda\T_2),
\\
\widetilde{\T}_2(f,\psi)&:=(\T_2f,
  \T_2\psi \cdot \lambda), \  \hbox { and}
\\
\widetilde{\T}_2\rho&:=\T_2\rho.
\end{align*}
We prove that the components of the multiplication
$\widetilde{\mu}_2$ are lax $\T_1$-algebra morphisms by verfying 
equations (i) and (ii) of
Definition~\ref{definition:lax_alg_lax_morph}. Equation (ii)
is is easy to verify. Equation (i) requires that the composed two 2-cells
in Figures~\ref{Vcera byl ten koncert prepaleny az do prahu bolesti.I}
and~\ref{Vcera byl ten koncert prepaleny az do prahu bolesti.II}
are equal. 
\begin{figure}
\[
\xymatrix@C=.9cm{
{\iii22}A \ar[rrr]^{\iii20 \mu_2}  \ar[ddd]_{\mu_1\iii02} 
&&&
{\iii21}A \ar[ddd]_{\mu_1\iii01} \ar[rd]^{\iii10\lambda}
\\ 
&&&& {\iii10\iii01\iii10} A \ar[d]_{\lambda \iii10} \ar[rrdd]^{\iii11 \alpha_1} 
\\ 
&&&&{\iii01\iii20}A \ar[dd]^{\iii01 \mu_1}  \ar[rrdd]^{\iii01\iii10\alpha_1}
\\
{\iii12}A \ar[rrr]^{\iii10 \mu_1} \ar[rd]^{\lambda\iii01}   &&& {\iii11}A \ar[dr]^\lambda  &&&
{\iii10\iii01}A \ar[d]^\lambda
\\
& {\iii01\iii10\iii01}A \ar[dr]^{\iii01 \lambda}   &&& {\iii01\iii10}A  \ar[ddrr]^{\iii01\alpha_1}   && {\iii01\iii10}A
\ar[dd]^{\iii01\alpha_1} \ar@{=>}[ll]|{\iii01\phi_1}
\\
&& {\iii02\iii10}A \ar@/^2em/[rru]^{\mu_2 \iii10} \ar[dr]^{\iii02\alpha_1}
\\
&&& {\iii02}A \ar[rrr]^{\mu_2}   &&\hphantom{LLL}& {\iii01}A
}
\]
\caption{\label{Vcera byl ten koncert prepaleny az do prahu bolesti.I}
A $2$-cell $\iii01\alpha_1 \cdot \lambda \cdot
  \iii11\alpha_1 \cdot \iii10 \lambda \cdot \iii20\mu_2
\Longrightarrow \mu_2 \cdot \iii02\alpha_1 \cdot \iii01 \lambda \cdot
\lambda \iii01 \cdot \mu_1 \iii02$.} 
\end{figure}
\begin{figure}
\[
\xymatrix@C=.9em{
{\iii22}A \ar[rrr]^{\iii20 \mu_2} \ar[rd]^{\iii10\lambda \iii01}  
\ar[ddd]_{\mu_1\iii02}
&&& {\iii21}A \ar[rd]^{\iii10\lambda}
\\
&{\iii10\iii01\iii10\iii01}A \ar[d]_{\lambda\iii10\iii01}
\ar[dr]^{\iii10\iii01\lambda} &&& {\iii10\iii01\iii10}A
\ar[ddrr]^{\iii11\alpha_1}  \ar@{-->}[d]_{\lambda \iii10}
\\
& {\iii01\iii20\iii01}A \ar[dr]^(.4){\iii01\iii10\lambda} 
\ar[dd]_(.4){\iii01\mu_1\iii01}  & {\iii10\iii02\iii10}A
\ar@/^2em/[rru]^{\iii10\mu_2\iii10} \ar[d]^(.6){\lambda\iii01\iii10} 
\ar[rd]^{\iii12\alpha_1}&&{\ttt12}A
\\
{\iii12}A  \ar[rd]^{\lambda \iii01}  && {\iii01\iii10\iii01\iii10}A
\ar[dr]^(.4){\iii01\iii11\alpha_1} \ar[d]_{\iii01\lambda\iii10}
 &{\iii12}A
\ar[rrr]^{\iii10\mu_2}  
\ar[d]^{\lambda \iii01}
&&&
{\iii11}A \ar[d]^\lambda
\\
&{\iii01\iii11}A \ar[rd]^(.4){\iii01\lambda} & {\iii02\iii20}A 
\ar[d]_{\iii02\mu_1}  \ar[dr]^{\ttt21\alpha_1}
\ar@/^2.5em/@{-->}[rruu]^(.75){\mu_2 \iii20}|(.28){\phantom{\rule{1em}{1em}}}
& {\iii01\iii11}A
\ar[d]^{\iii01\lambda} &&& {\ttt11}A \ar[dd]_{\iii01\alpha_1}
\\
&&{\iii02\iii10}A    \ar[rd]^(.6){\iii02\alpha_1}   
& {\iii02\iii10}A \ar@/^2em/[rrru]^{\mu_2 \iii10} \ar[d]^{\iii02\alpha_1} 
\ar@{=>}[l]|(.465){\iii02\phi_1}
\\
&&& {\iii02}A \ar[rrr]^{\mu_2} &&\hphantom{LLLLLLLLL}& {\iii01}A
}
\]
\caption{\label{Vcera byl ten koncert prepaleny az do prahu bolesti.II}
A $2$-cell $\iii01\alpha_1 \cdot \lambda \cdot
  \iii11\alpha_1 \cdot \iii10 \lambda \cdot \iii20\mu_2
\Longrightarrow \mu_2 \cdot \iii02\alpha_1 \cdot \iii01 \lambda \cdot
\lambda \iii01 \cdot \mu_1 \iii02$.} 
\end{figure}
To this end, notice that the diagram 
\[
\xymatrix{
{\iii22} \ar[r]^(.4){\iii10\lambda\iii01}\ar[d]_{\iii20\mu_2}   &
\ar[r]^{\iii11 \lambda} {\iii11\iii11} &
\ar[r]^{\lambda\ttt11} {\iii12\ttt01} \ar[d]^{\iii10 \mu_2 \iii10} & 
\ar[r]^(.6){\iii01\lambda\iii10} {\ttt11\ttt11}& {\iii02\iii20}
\ar[d]^{\mu_2\iii20}
\\
{\iii21} \ar[rr]^{\iii10\lambda} &&
 \ar[rr]^{\lambda\iii10} {\iii11\iii10} && {\ttt12}
}
\]
embedded to the coherence cube as indicated in Figure~\ref{Vcera byl
  ten koncert prepaleny az do prahu bolesti.II}, 
commutes. The equality of the two $2$-cells in 
Figures~\ref{Vcera byl ten koncert prepaleny az do prahu bolesti.I}
and~\ref{Vcera byl ten koncert prepaleny az do prahu bolesti.II}
follows from the $2$-naturality of
$\mu_2$ and the diagram
\[
\xymatrix@C=-.4em{
&{\ttt22} A\ar[rrrrrrrrrrr]^{\mu_2\ttt02}
 \ar@/^2.1em/[dd] \ar@/_2,1em/[dd] &&&&&&&\hphantom{BBBBBB}&&&&  
\ar@/_2.1em/[dd]  \ar@/^2.1em/[dd]   {\ttt12}  A
\\
\ar@{=>}[rr]|{\ttt20\phi_1} &&&&&&&&&&&\ar@{=>}[rr]|{\ttt10\phi_1}&&
\\
&{\ttt20} A \ar[rrrrrrrrrrr]^{\mu_2} &&&&&&&&&&& {\ttt10} A.
}
\]
Notice that the same argument was used to establish the
commutativity of the cubes C, E and F in the proof of
Theorem~\ref{Ozivil jsem Jarcino auto.}. 
 
The verification 
that the components of $\widetilde{\eta}_2$ are algebra morphisms is analogous.
Clearly, $\widetilde{\mu}_2$ and $\widetilde{\eta}_2$ satisfy the classical monad axioms, since they are defined by $\mu_2$ and $\eta_2$.
\end{proof}

\listoftodos

\bibliographystyle{plain} 

\end{document}